\documentclass[reqno]{amsart}
\usepackage[T1]{fontenc}

\usepackage[left=3.1cm, right=3.1cm, bottom=4cm]{geometry}
\usepackage{setspace}
\allowdisplaybreaks[2]

\usepackage{amsmath,amssymb,amsthm}
\usepackage{hyperref}
\usepackage{doi}
\usepackage{xcolor}
\usepackage{pgf,tikz}
\usepackage{enumitem}

\usepackage{mathtools}
\usepackage{fontawesome5}

\usepackage{graphicx}
\usepackage{mathrsfs}
\usepackage{lmodern}
\usepackage{bm}

\usepackage[colorinlistoftodos,prependcaption,textsize=tiny]{todonotes}
\usepackage{xargs}
\newcommandx{\RM}[2][1=]{\todo[color=orange!30,#1]{RM: #2}}
\newcommandx{\LR}[2][1=]{\todo[color=red!30,#1]{LR: #2}}
\newcommandx{\MS}[2][1=]{\todo[color=blue!30,#1]{MS: #2}}

\DeclareMathOperator{\UP}{\mathsf{UP}_{\mathit{s}}}
\DeclareMathOperator{\NUP}{\mathsf{NUP}_{\mathit{s}}}
\newcommand\supp{\mathrm{supp}~}

\newcommand{\IM}{\mathrm{Im}}

\numberwithin{equation}{section}

\newcommand{\dmu}{\, \mathrm{d}\mu}
\newcommand{\dy}{\, \mathrm{d}y}
\newcommand{\dx}{\, \mathrm{d}x}
\newcommand{\dt}{\, \mathrm{d}t}

\newcommand{\dw}{\, \mathrm{d}w}

\newcommand{\dxi}{\, \mathrm{d}\xi}
\newcommand{\dnu}{\, \mathrm{d}\nu}

\newcommand{\sgn}{\mathrm{sgn}}
\newcommand{\st}{\mathrel{:}}

\def\R{\mathbb{R}}

\def\N{\mathbb{N}}
\def\Z{\mathbb{Z}}
\def\C{\mathbb{C}}

\def\L{\mathcal{L}}

\def\V{\mathcal{V}}

\def\H{\mathcal{H}}
\def\Sp{\mathcal{S}}

\def\D{\mathrm{D}}
\def\GL{\mathrm{GL}}

\def\Hm{\mathsf{H}}
\def\Sm{\mathsf{S}}

\renewcommand{\l}{\lambda}
\newcommand{\dist}{\operatorname{dist}}

\newtheorem{theorem}{Theorem}[section]
\newtheorem{proposition}[theorem]{Proposition}
\newtheorem{lemma}[theorem]{Lemma}
\newtheorem{corollary}[theorem]{Corollary}

\newtheorem*{questionA}{Question A}

\newtheorem*{theoremA}{Theorem A}
\newtheorem*{theoremB}{Theorem B}

\theoremstyle{definition}
\newtheorem{definition}[theorem]{Definition}
\newtheorem{remark}[theorem]{Remark}
\newtheorem{example}[theorem]{Example}

\newtheoremstyle{claimstyle}{0.75\baselineskip}{0pt}{\normalfont}{}{\normalfont}{.}{ }{}

\theoremstyle{claimstyle}
\newtheorem{claim}{Claim}
\newtheorem*{claim*}{Claim}

\newcommand{\defi}[1]{\stackrel{\mathrm{def}}{=}\, #1}
\newcommand{\hp}[1]{\left(#1\right)}
\newcommand{\hn}[1]{\left\|#1\right\|}
\newcommand{\hmm}[1]{\left|#1\right|}
\newcommand{\ha}[1]{\left\langle#1\right\rangle}

\newcommand{\hfl}[1]{\left\lfloor#1\right\rfloor}
\newcommand{\hbra}[1]{\left[#1\right]}
\newcommand{\hch}[1]{\left\{#1\right\}}

\begin{document}

\title[UP and NUP for fractional Laplacian]{Uniqueness and non-uniqueness pairs \\ for the fractional Laplacian}

\author[R.\ Motta]{Ricardo Motta}
\address[Ricardo Motta]{
  BCAM -- Basque Center for Applied Mathematics, 
  48009 Bilbao, Spain,
  \newline \phantom{\quad} \& Universidad del Pa\'{i}s Vasco / Euskal Herriko Unibertsitatea, 
  48080 Bilbao, Spain}
\email{\href{mailto:rmachado@bcamath.org}{rmachado@bcamath.org}}

\date{\today}
\thanks{The author has been supported by the Basque Government through the BERC 2026–2029 program, and by the Spanish Agencia Estatal de Investigaci\'{o}n through BCAM Severo Ochoa accreditation CEX2021-001142-S/MCIN/AEI/10.13039/501100011033, CNS2023-143893, and PID2023-146646NB-I00 funded by MICIU/AEI/10.13039/501100011033 and FEDER/EU and by ESF+.}

\subjclass[2020]{Primary: 42B10; Secondary: 30D15, 35B60, 35R11, 42A38}


\keywords{Fractional Laplacian, uniqueness pairs, discrete zeros, de~Branges spaces, multiplier operators.}

\begin{abstract} 
We study uniqueness and non-uniqueness pairs for the fractional Laplacian in the discrete setting. More precisely, under mild regularity assumptions on a function $f$ satisfying $f=0$ on $\Lambda$ and $(-\Delta)^s f=0$ on $M$, where $\Lambda, M \subset \mathbb{R}^d$ are discrete, we find sufficient conditions on these sets that force $f$ to vanish identically, and we provide examples in which non-uniqueness occurs. The uniqueness proofs combine decay estimates forced by discrete zero sets with analytic continuation, while the non-uniqueness results are obtained from an interpolation theorem.
\end{abstract}

\maketitle

\section{Introduction}\label{sec:1}

\subsection{Uniqueness and non-uniqueness pairs}
In this article, we are interested in understanding the following type of question:
\begin{questionA}\label{q:A}
Given $\Omega \subset \mathbb{R}^d$, spaces $\Hm$ and $\Sm$ of functions defined in $\Omega$, an operator $T:\Hm\to \Sm$, and subsets $\Lambda,M\subset\Omega$, under what conditions on $\Lambda, M$, and $\Hm$ does $f \in \Hm$ satisfying $f|_{\Lambda} = 0$ and $T(f)|_{M} = 0$ necessarily vanish identically?
\end{questionA}

If this happens, we say that $(\Lambda, M)$ is a \emph{uniqueness pair} for the operator $T$ in $\Hm$, denoted as $\mathsf{UP}_T$ for $\Hm$. Otherwise, $(\Lambda, M)$ is called a \emph{non-uniqueness pair} and is denoted as $\mathsf{NUP}_T$ for $\Hm$. This kind of question is classical, but the notion of uniqueness pairs has attracted significant attention in recent years, particularly in connection with problems arising in Fourier analysis. We define the Fourier transform for $f \in L^1(\mathbb{R}^d)$ by 
\begin{equation*} 
    \mathcal{F}(f)(\xi) = \widehat{f}(\xi) \defi \int_{\mathbb{R}^d} f(x) e^{-2\pi i \xi \cdot x}\dx. 
\end{equation*} 
When $T = \mathcal{F}$, these are known as \emph{Fourier uniqueness pairs} and are often connected to rigidity phenomena resulting from the influence of the uncertainty principle.

Perhaps the most classical example arises from the Beurling--Malliavin theorem \cite{BeurlingMalliavin1967}, which provides a description of those sets $\Lambda$ for which $(\Lambda, \R \setminus (-\delta, \delta))$ is a $\mathsf{UP}_\mathcal{F}$ or a $\mathsf{NUP}_\mathcal{F}$ for $\Hm=L^2(\R)$, given that the condition $M = \R \setminus (-\delta, \delta)$ forces the Fourier transform to be supported in $(-\delta, \delta)$. In particular, we know that $\Lambda = \mathbb{Z}$ and $M = \R \setminus(-\frac{1}{2}, \frac{1}{2})$ is a $\mathsf{UP}_\mathcal{F}$ for $\Hm$, while $\Lambda = c\mathbb{Z}$ and $M = \R \setminus(-\frac{1}{2}, \frac{1}{2})$ for any constant $c > 1$ is a $\mathsf{NUP}_\mathcal{F}$ for $\Hm$.

A remarkable result about Fourier uniqueness pairs was recently obtained by Radchenko and Viazovska \cite{RadchenkoViazovska2019}. As a consequence of an interpolation formula that they established for $\mathcal{S}_{\mathrm{even}}(\mathbb{R})$, which is the space of even functions in the Schwartz space $\mathcal{S}(\mathbb{R})$, it follows that the pair $(\Lambda, M)$ is a $\mathsf{UP}_{\mathcal{F}}$ for $\mathcal{S}_{\mathrm{even}}(\mathbb{R})$, with
\begin{equation*}
    \Lambda = M = \hch{\pm~\sqrt{n} \st n\in \Z_+}.
\end{equation*}
This result is notable for several reasons. First, both sets $\Lambda$ and $M$ are discrete, showing that rigidity phenomena may arise even under highly sparse sampling conditions. Second, their construction relies on certain arithmetic structures closely related to the solution of the sphere packing problem in dimensions $8$ and $24$ \cite{CohnKumarMillerRadchenkoViazovska2017, Viazovska2017}, and raises the question of whether this result has a fundamentally algebraic or analytic nature.

Motivated by this question, Ramos and Sousa \cite{RamosSousa2022} showed that 
\begin{equation}\label{eq:def-Z-alpha}
    \Z_\alpha \defi \hch{\pm~n^\alpha \st n\in \Z_+}
\end{equation}
forms with itself a $\mathsf{UP}_\mathcal{F}$ for $\mathcal{S}(\R)$ whenever $\alpha < 1 - \frac{\sqrt{2}}{2}$. Later, Kulikov, Nazarov, and Sodin \cite{KulikovNazarovSodin2025} extended the result of \cite{RamosSousa2022} to the full parameter range $\alpha < \frac{1}{2}$. They also introduced the more general notion of supercritical and subcritical pairs, together with corresponding $\mathsf{UP}_\mathcal{F}$ and $\mathsf{NUP}_\mathcal{F}$ results associated with these sets. All of these previous results hold in one dimension. For higher dimensions, Adve \cite{Adve2023} obtained a result for discrete subsets forming a $\mathsf{UP}_\mathcal{F}$ or $\mathsf{NUP}_\mathcal{F}$ for $\mathcal{S}(\mathbb{R}^{d})$.

Although the Fourier transform provides a classical example in which uniqueness pairs can be studied, more recent developments have shown that similar questions naturally arise in the context of partial differential equations, where the notion of uniqueness is closely related to propagation and continuation properties of solutions. A notable example in this direction is given by \textit{Heisenberg uniqueness pairs}, introduced by Hedenmalm and Montes-Rodríguez \cite{Hedenmalm2011} in the study of the Klein–Gordon equation. Analogous uniqueness results have been investigated for evolution equations. In particular, consider the Schrödinger equation
\begin{equation}\label{eq:schr-eq}
\begin{cases}
    i\partial_t u = -\Delta u+ Vu & \text{ in } \mathbb{R}\times [0,1], 
    \\ u(\cdot,0)=u_0 & \text{ on } \R.
\end{cases} 
\end{equation}
Under mild regularity assumptions on the potential $V$, Kehle and Ramos \cite{KehleRamos2022} showed that the time-one solution operator $E_1$, which maps the initial datum to the value of the corresponding solution at time $t=1$ associated with \eqref{eq:schr-eq}, admits uniqueness pairs. More precisely, for $0<\alpha<\tfrac12$ and $c_1,c_2>0$, they proved that $(\Lambda^1_\alpha(c_1),\Lambda^1_\alpha(c_2))$ is a $\mathsf{UP}_{E_1}$ for a suitable $\Hm \subset L^{1}(\mathbb{R},(1+|x|)\dx)$, where
\begin{equation}\label{eq:def-Lambda-alpha}
    \Lambda^d_{\alpha}(c) \defi \hch{c(\log|k|)^{\alpha}\frac{k}{{|k|}} \st k \in \Z^d,\, k\neq 0}.
\end{equation}
An important aspect of these results is that the uniqueness sets are discrete, and similar conclusions are obtained for certain nonlinear versions of \eqref{eq:schr-eq}.

Related phenomena have already been observed regarding unique continuation for harmonic functions in the upper half-space $\mathbb{R}^{d+1}_+$. Specifically, let $\mathcal{U}\subset C^1(\overline{\mathbb{R}^{d+1}_+})$ denote the subspace of harmonic functions, and let $D:\mathcal{U}\to L^0(\overline{\mathbb{R}^{d+1}_+})$ be defined by $D(u)=|\partial_\nu u|$, where $\partial_\nu u$ denotes the normal derivative. It has long been known that, for every nonempty open set $O\subset\mathbb{R}^d$, the pair $(O\times\{0\}, O\times\{0\})$ is a $\mathsf{UP}_D$ for $\mathcal{U}$. Such results can be traced back, for example, to Riesz \cite{Riesz1938}.

For dimension $d=1$ or for nonnegative functions in higher dimensions, the above result still holds if the assumption that the underlying set is open is replaced by the weaker requirement that it has positive Lebesgue measure (see \cite[Introduction]{AE1} for a brief discussion). Interestingly, for $d \geq 2$, there exists a set $E \subset \mathbb{R}^d$ of positive Lebesgue measure such that $(E\times\{0\},E\times\{0\})$ is a $\mathsf{NUP}_D$ for $\mathcal{U}$, as proved by the Bourgain--Wolff construction in \cite{BourgainWolff1990}.

\subsection{Main results}
We will be interested in tackling Question~\hyperref[q:A]{A} concerning the fractional Laplacian as an operator and discrete sets $\Lambda, M \subset \R^d$. For $0<s<1$, we write $(-\Delta)^s : \Hm \to \Sm$, where the function spaces $\Hm$ and $\Sm$ will be specified later, and we abbreviate $\mathsf{UP}_{(-\Delta)^s}$ and $\mathsf{NUP}_{(-\Delta)^s}$ by $\UP$ and $\NUP$, respectively.

To study uniqueness and non-uniqueness pairs on discrete sets, we first describe the geometric setup. For $0<\rho<1$, a discrete set $\Gamma\subset\R^d$ is called $\rho$-separated if there exist $A,B>0$ such that
\begin{equation}\label{eq:sep-exp}
    \dist(\gamma,\Gamma\setminus\{\gamma\}) \ge A e^{-B|\gamma|^\rho} \quad \text{for } \gamma\in\Gamma.
\end{equation} 
Moreover, given an injective function $F:\mathbb{R}^d \to \mathbb{R}^d$ and a discrete set $\Gamma \subset \mathbb{R}^d$, we measure the distribution of $\Gamma$ under the change of variables induced by $F$. Assuming that $F(\mathbb{R}^d)$ contains balls of arbitrarily large radius, consider below $M_F(\Gamma,r)$ by
\begin{equation*}
    M_F(\Gamma,r)= \hch{\frac{\#(F(\Gamma)\cap B)}{|B|}\st B\subset F(\R^d) \text{ a ball and } |B|=r} \quad \text{for } r>0,
\end{equation*}
and
\begin{equation}\label{eq:dens-with-F}
    \D_F^+(\Gamma)\defi\limsup_{r\to\infty}\sup M_F(\Gamma,r), \qquad \D_F^-(\Gamma)\defi \liminf_{r\to\infty}\inf M_F(\Gamma,r).
\end{equation}
As usual, the previous values are allowed to lie in $[0,\infty]$. Motivated by the one-dimensional case\footnote{See Remark~\ref{rem:cond-lim-der}.}, we define for $\alpha,c>0$, the changes of variables $G_{\alpha}:\R \to \R$ and $\Phi_{\alpha,c}:\R^d \to \R^d$ by
\begin{equation}\label{eq:def-G-Phi}
    G_{\alpha}(y)\defi \sgn(y)|y|^{1/\alpha}, \qquad \Phi_{\alpha,c}(x)\defi \tfrac{x}{|x|}e^{(|x|/c)^{1/\alpha}}\mathbf{1}_{\hch{|x|>0}}(x).
\end{equation}

Our first result concerns uniqueness pairs related to such functions. 
\begin{theorem}\label{thm:UP}
Let $\Hm=\Sp(\R^d)$, $0<\alpha,s<1$, and $\Lambda,M\subset \mathbb{R}^d$ be discrete. Set $\beta = \alpha^{-1}$. Then $(\Lambda, M)$ is a $\UP$ for $\Hm$ in each of the following cases:
\begin{enumerate}
    \item\label{item:1-thmUP} $d\ge 1$, and there exist $c_1,c_2>0$ such that $\D^-_{\Phi_{1,c_1}}(\Lambda)>0$ and $\D^-_{\Phi_{\beta,c_2}}(M)>0$.
    \item\label{item:2-thmUP} $d=1$, and there exist $c>0$ such that $\D^-_{\Phi_{1,c}}(\Lambda)>0$ and $\D^-_{G_{\alpha}}(M)>0$.
\end{enumerate}
\end{theorem}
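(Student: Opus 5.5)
Unwinding the hypotheses: $\D^-_{\Phi_{1,c_1}}(\Lambda)>0$ says that $\Phi_{1,c_1}(\Lambda)$ has positive lower density in $\R^d$. Since $\Phi_{1,c_1}$ maps $|x|=r$ to radius $e^{r/c_1}$, this means $\Lambda$ is very dense at infinity. Precisely, every ball of radius $\approx e^{-|x|/c_1}$ (suitably rescaled) around a point $x$ with $|x|$ large meets $\Lambda$. Similarly, $M$ meets every ball of radius $\approx e^{-(|x|/c_2)^{\alpha}}$; note $1/\beta=\alpha$ in the exponent. In case (2), $M$ has gaps of size $\approx |x|^{1-1/\alpha}$. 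The plan has three steps.

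First, turn the zeros into decay. If $f\in\Sp$ vanishes on a set that is $\delta(x)$-dense near $x$, Taylor expansion of order $N$ (mean value, applied along lines in each coordinate, or the one-dimensional Rolle argument in $d=1$) gives
\begin{equation*}
|f(x)|\lesssim \delta(x)^N \sup_{B(x,\delta)}|\nabla^N f|.
\end{equation*}
To get genuine exponential decay rather than just super-polynomial decay, I will bootstrap. Derivatives of $f$ decay like its Fourier side allows, so I will choose $N\sim |x|$ optimally, using Schwartz control of $\nabla^N f$ by $C^N N!$-type bounds. Where those bounds are unavailable, I will instead use a Bernstein-type argument after the analyticity in the next step has been established. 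The intended conclusion is $|f(x)|\lesssim e^{-a|x|^{\gamma}}$ with $\gamma$ large enough.

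Second, apply the same reasoning to $g=(-\Delta)^s f$, which is smooth and vanishes on $M$. The complication is that $g$ only decays like $|x|^{-d-2s}$ generically. The zeros on $M$, dense at scale $e^{-(|x|/c_2)^\alpha}$, force $g$ to be small there. Combined with the exact asymptotic expansion of $g$ at infinity, whose coefficients are moments of $f$ (from the kernel $|x-y|^{-d-2s}$), this forces all moments $\int x^\kappa f$ to vanish, that is, $\widehat f$ vanishes to infinite order at $0$. In case (2) the gaps are polynomial, $|x|^{1-1/\alpha}$. Rolle's theorem on $N$ consecutive zeros then gives $|g(x)|\lesssim |x|^{N(1-1/\alpha)}\|g^{(N)}\|$, and since $g^{(N)}=(-\Delta)^s f^{(N)}$ decays at least like $|x|^{-d-2s}$, the same moment-vanishing conclusion follows.

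Third, upgrade to analyticity and conclude. Exponential decay $|f(x)|\lesssim e^{-a|x|}$ from step one makes $\widehat f$ analytic in a strip. Then $\widehat{g}=(2\pi|\xi|)^{2s}\widehat f$. Vanishing to infinite order at $0$ of an analytic function forces $\widehat f\equiv0$, hence $f\equiv 0$. Alternatively, one can argue that the decay of $g$ forced by $M$ makes $|\xi|^{2s}\widehat f$ smooth at $0$, which, since $2s\notin 2\Z$, again forces $\widehat f$ to vanish to all orders at $0$.

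The main obstacle is step one. Obtaining exponential (not merely rapid) decay from density at scale $e^{-|x|/c_1}$ requires derivative bounds $\|\nabla^N f\|_{L^\infty(B(x,1))}\le C^N N!$ locally. That is not available for general Schwartz $f$. I would try an iteration: first get the decay $e^{-c|x|^{\epsilon}}$ using derivative bounds of the form $N^{N}$ coming from Schwartz seminorms. This gives some Gevrey regularity of $\widehat f$, which is then fed back to improve the derivative estimates of $f$. Repeating should close at full exponential decay, with the constants $c_1,c_2$ affecting only the width of the strip.
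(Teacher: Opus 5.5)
Your overall architecture is the paper's. Exponential decay of $f$, coming from the zeros on $\Lambda$, makes $\widehat f$ holomorphic in a strip. Super-polynomial decay of $g=(-\Delta)^s f$, coming from the zeros on $M$, makes $\widehat g=|\xi|^{2s}\widehat f$ smooth, and this forces $\widehat f$ to vanish to infinite order at $0$; your ``alternatively'' in step three is exactly Lemma~\ref{lem:non-smooth-high-dim}. The identity theorem then finishes. Your Rolle argument in case (2) is Proposition~\ref{prop:JM-decay}.

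\textbf{Step one is the problem.} You single it out as the main obstacle, but that obstacle does not exist, and the remedy you propose does not work. Exponential decay needs no higher-order Taylor expansion: the case $N=1$ of your own estimate already gives it.
\begin{itemize}
\item The hypothesis $\D^-_{\Phi_{1,c_1}}(\Lambda)>0$ gives a fixed $R$ such that every ball of radius $R$ around $\Phi_{1,c_1}(x)$ meets $\Phi_{1,c_1}(\Lambda)$.
\item Pull back by the inverse map $u\mapsto c_1\log|u|\,u/|u|$, whose differential is $O(\log|u|/|u|)$. This yields $\lambda\in\Lambda$ with $|x-\lambda|\lesssim e^{-\delta|x|/c_1}$ for any $\delta<1$; this is Proposition~\ref{prop:zeros-decay}.
\item Then $|f(x)|=|f(x)-f(\lambda)|\le\|\nabla f\|_\infty|x-\lambda|\lesssim e^{-\delta|x|/c_1}$. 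This is Proposition~\ref{prop:Hol-decay}, and it already gives analyticity of $\widehat f$ in $\{|\mathrm{Im}\,z|<\delta/(2\pi c_1)\}$.
\item The same one-line argument with $\|\nabla g\|_\infty<\infty$ gives $|g(x)|\lesssim e^{-\delta(|x|/c_2)^{\alpha}}$ in case (1).
\end{itemize}

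\textbf{Your bootstrap cannot close.}
\begin{itemize}
\item Schwartz seminorms give no control of $\|\nabla^N f\|_\infty$ that is uniform in $N$. A Schwartz function need not satisfy any $N^N$ or Gevrey-type bound; for instance, if $\widehat f=e^{-\log^2(2+|\xi|)}$, then $\|f^{(N)}\|_\infty$ grows like $e^{cN^2}$.
\item Decay of $f$ improves the regularity of $\widehat f$. Derivative bounds for $f$, however, are governed by the decay of $\widehat f$, which nothing in the hypotheses improves. So ``feeding back'' Gevrey regularity of $\widehat f$ gains nothing.
\item The $N$-th order bound $|f(x)|\lesssim\delta^N\sup|\nabla^N f|$ does not follow in $d\ge2$ from a mean value argument along coordinate lines, since the zeros need not lie on those lines.
\end{itemize}
So step one, as written, is unproved. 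Replace it by the first-order argument above. This also shows that only H\"older regularity of $f$ and $g$ is used, which is why the paper's proof runs in $X_{s,\sigma}$.

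\textbf{Your step-two route is a valid alternative.} Expanding $g$ at infinity with coefficients given by the moments of $f$ can work, but it needs two things you have not supplied.
\begin{itemize}
\item A justification of the asymptotic expansion itself.
\item The non-degeneracy fact that $P(\nabla)|x|^{-d-2s}\equiv0$ on $\R^d\setminus\{0\}$ forces the homogeneous polynomial $P$ to vanish. This is true, because $P(\xi)|\xi|^{2s}$ would have to be a polynomial while being homogeneous of non-integer degree $k+2s$. It is the non-smoothness of $|\xi|^{2s}$ at the origin in disguise.
\end{itemize}
The Fourier-side version you mention in step three avoids both issues, and it is what the paper uses.
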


As a consequence of Theorem~\ref{thm:UP}, if $f\in \Sp(\R^d)$ vanishes on $\{c_1(\log|k|){|k|^{-1}k}\}_{k\in \Z^d\setminus\hch{0}}$ and $(-\Delta)^sf$ vanishes on $\{c_2(\log|k|)^\beta{|k|^{-1}k}\}_{k\in \Z^d\setminus\hch{0}}$, where $c_1,c_2>0$ and $\beta>1$, then $f$ must be identically zero. The same conclusion holds if $f\in \Sp(\R)$ vanishes on $\{\pm c\log(n)\}_{n\in \N}$ and $(-\Delta)^sf$ vanishes on $\{\pm n^\alpha\}_{n\in \N}$, where $c>0$ and $0<\alpha<1$. Indeed, this follows directly from
\begin{equation}\label{eq:rel-dens-Z-L}
    \D_{\Phi_{\alpha,c}}^\pm(\Lambda_\alpha^d(c))=1=\D_{G_\alpha}^\pm(\Z_\alpha),
\end{equation}
where $\Lambda_\alpha^d(c)$ and $\Z_\alpha$ are the sets defined in \eqref{eq:def-Lambda-alpha} and \eqref{eq:def-Z-alpha}, respectively.

Notice that it follows by definition that if $(\Lambda,M)$ is a $\mathsf{UP}_T$ for $\Hm$, then it is also a $\mathsf{UP}_T$ for every $\Hm' \subset \Hm$. So, it is natural to consider uniqueness pairs for function spaces with the lowest possible regularity. However, there is a minor point concerning the choice of the spaces. If $T:\Hm\to\Sm$ is an operator such that $T(f)$ is prescribed on a discrete set, then $\Sm$ must be a space of functions for which pointwise evaluation is well defined, rather than a space whose elements are defined only up to almost everywhere equivalence. This means that in the discrete setting, the $\UP$ problem requires stronger regularity assumptions than those provided by $H^{s}(\mathbb{R}^d)$. Nevertheless, we remark that the first item of Theorem \ref{thm:UP} holds in the larger space $\Hm=X_{s,\sigma}(\mathbb{R}^d)$ which will be introduced in Subsection~\ref{sec:2.1}, but we state the theorem in the Schwartz space for the sake of convenience.

We next turn to non-uniqueness phenomena. In the opposite direction of the uniqueness pair situation, when $(\Lambda, M)$ is a $\mathsf{NUP}_T$ for $\Hm$, then it remains a $\mathsf{NUP}_T$ for every $\Hm'$ with $\Hm \subset \Hm'$. Hence, the smaller the space, the stronger the result about non-uniqueness. With this in mind, and motivated by \cite{KulikovNazarovSodin2025}, we introduce for $s>0$ and $0<p,q<\infty$ the Gelfand–Shilov-type spaces $\mathcal{S}_{s}^{p,q}(\mathbb{R}^d)$ as the spaces of Schwartz functions $f$ such that $(-\Delta)^sf \in \Sp(\R^d)$ and there exists $c=c_f>0$ such that
\begin{equation}\label{eq:GS-spaces}
    \sup_{x\in\R^d} |f(x)| e^{c|x|^p} + \sup_{x\in\R^d} |(-\Delta)^{s} f(x)| e^{c|x|^q} < \infty.
\end{equation}
We will proceed to identify $\NUP$ in such highly regular spaces, which are subspaces of the Schwartz class, and therefore will imply a corresponding result for $\Sp(\R^d)$.

For the sake of comparison, let us recall some corresponding $\UP$ results for open sets available in the literature. In this direction, Ghosh, Salo, and Uhlmann \cite{Ghosh2020} proved a low-regularity result showing that, for every $0<s<1$ and $r\in\mathbb{R}$, every nonempty open set $O\subset\mathbb{R}^d$ has the property that $(O, O)$ is a $\UP$ for $H^r(\mathbb{R}^d)$. Further strong unique continuation results in this setting can be found in \cite{FallFelli2014, KenigPilodPonceVega2020,Ruland2015}.

In stark contrast, there exist open sets $O \subset \mathbb{R}^d$ such that $(O, O)$ does not enjoy the uniqueness pair property for the Fourier transform. In fact, even more pathological phenomena can occur: there are nonzero functions $h\in \Sp(\R^d)$ such that both $h$ and $\widehat h$ vanish on an open neighborhood of $\Z^d$. Indeed, for $0<r<\frac{1}{2}$, take $f,g\in C_c^\infty(B(0,r))$ and $a=(\frac12,\dots,\frac12)\in \R^d$. Set $f_a(x)=f(x-a)$, $g_a(x)=g(x-a)$, and $O_r=\{x\in \R^d \st \operatorname{dist}(x,\Z^d)<\frac{\sqrt d}{2}-r\}$. Let $G(\xi)=\sum_{m\in\Z^d} g_a(m-\xi)$, which is a smooth $\Z^d$-periodic function on $\R^d$. If $\{c_k\}$ denotes the Fourier coefficients of $G$, then $\{c_k\}$ is rapidly decaying and
\begin{equation*}
    h(x)=\sum_{k\in\Z^d} c_k f_a(x-k)
\end{equation*}
belongs to $\Sp(\R^d)$. Moreover, both $h$ and $\widehat{h}$ vanish on $O_r$, given that $\widehat h(\xi)= \widehat{f_a}(\xi) G(-\xi).$

In view of this, and the results by Ghosh, Salo, and Uhlmann \cite{Ghosh2020}, the fractional Laplacian may be viewed as exhibiting a stronger form of nonlocality than the Fourier transform, at least from the perspective of their behavior on open sets. The next theorem shows that this contrast does not persist at the discrete level.
\begin{theorem}\label{thm:nup-strong}
Let $0<\rho<1$, $s>0$, and $0<p,q<1$. Let $\Lambda, M\subset\R^d$ be $\rho$-separated satisfying \eqref{eq:sep-exp}. Then $\{f\in\mathcal S_s^{p,q}(\mathbb{R}^d) \st f|_\Lambda=0 \text{ and } (-\Delta)^s f|_M=0\}$ is infinite-dimensional. In particular, $(\Lambda,M)$ is a $\NUP$ for $\mathcal S_s^{p,q}(\mathbb{R}^d)$.
\end{theorem}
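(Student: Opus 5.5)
The plan is to build, by an interpolation argument, functions in $\mathcal S_s^{p,q}(\R^d)$ with prescribed values of $f$ on $\Lambda$ and of $(-\Delta)^s f$ on $M$. We then show that the set of functions with zero data is infinite-dimensional. The natural framework is a Banach space of functions with stretched-exponential decay in both $f$ and $(-\Delta)^s f$. We take exponents $p<p'<1$, $q<q'<1$, and a parameter $\rho<\rho'<1$ with $\rho'$ at least $\max(p',q')$ (shrinking $p,q$ is harmless, since the space only grows with smaller exponents).

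\textbf{Step 1: localized building blocks.} For each $\lambda\in\Lambda$ we want a function $\varphi_\lambda\in\mathcal S$ with two properties. First, $\varphi_\lambda(\lambda)=1$, $\varphi_\lambda$ vanishes on $\Lambda\setminus\{\lambda\}$, and $(-\Delta)^s\varphi_\lambda$ vanishes on $M$. Second, both $\varphi_\lambda$ and $(-\Delta)^s\varphi_\lambda$ have size at most $e^{C|\lambda|^{\rho'}}e^{-c|x|^{p'}}$ (resp.\ $q'$). We define $\psi_\mu$ for $\mu\in M$ analogously. Doing this exactly is circular, so instead we use approximate blocks and an iteration.

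\textbf{Step 2: approximate blocks from compact Fourier support combined with Gevrey localization.} Take $\chi$ in a Gevrey class of order $1/p'$ with compact support in a ball of radius comparable to $A e^{-B|\lambda|^\rho}$, so that only $\lambda$ lies in its support. Its derivatives are then controlled by $e^{C|\lambda|^{\rho}}$-type losses. The function $\chi(\cdot-\lambda)$ vanishes on $\Lambda\setminus\{\lambda\}$, but $(-\Delta)^s$ of it is not small on $M$.

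To fix this, we prefer to work on the Fourier side. We look for $f=\sum_\lambda a_\lambda \varphi_\lambda+\sum_\mu b_\mu \psi_\mu$, where $\psi_\mu=(-\Delta)^{-s}$ applied to a bump at $\mu$ that is cut off near the origin in frequency. This is arranged so that $(-\Delta)^s\psi_\mu$ is a localized bump at $\mu$ multiplied by a function with vanishing moments. Then $f|_\Lambda=0$ and $(-\Delta)^s f|_M=0$ becomes a linear system $(I+K)(a,b)=\text{data}$. Here $K$ encodes the cross terms: $\psi_\mu(\lambda)$ and $(-\Delta)^s\varphi_\lambda(\mu)$. Because $(-\Delta)^s$ has a kernel $|x-y|^{-d-2s}$ off the diagonal and the bumps have vanishing moments of high order, these cross terms decay like $|\lambda-\mu|^{-N}$. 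We then solve by a Neumann series in weighted $\ell^\infty$ spaces with weights $e^{-c|\lambda|^{p}}$, after grouping points into dyadic shells and making the cutoffs narrow enough that $\|K\|<1$.

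\textbf{Step 3: infinite dimensionality.} Pick an infinite sequence of points $z_j\notin\Lambda$ far from everything, and add them to $\Lambda$ with prescribed value $1$ at $z_j$. The resulting $\rho$-separated set still satisfies \eqref{eq:sep-exp}, so the interpolation gives $f_j$ with $f_j(z_j)=1$ and $f_j(z_k)=0$ for $k\ne j$. These $f_j$ are linearly independent.

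\textbf{Main obstacle.} The key difficulty is reconciling $(-\Delta)^s$ with the stretched-exponential decay $e^{-c|x|^q}$. Since $(-\Delta)^s f$ of a localized function decays only polynomially unless many moments vanish, we need bumps whose moments vanish to all orders while remaining compactly supported in a Gevrey class. Here the condition $q<1$ is essential: non-quasianalytic Gevrey functions exist exactly in this regime. The approach we would try first is to take $\widehat{\varphi}$ vanishing to infinite order at $0$, in a Gevrey class of order less than $1/q$, and estimate $|\xi|^{2s}\widehat\varphi$ with Gevrey bounds to obtain decay $e^{-c|x|^{q}}$. The cost is that $\varphi$ itself is no longer compactly supported. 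The cross-term smallness then has to come from the $e^{-c|x|^{p}}$ decay against the separation $e^{-B|\gamma|^\rho}$, which is where $\rho<1$ enters.
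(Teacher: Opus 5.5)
Your architecture matches the paper's. You use Gevrey profiles whose Fourier transforms are supported away from the origin; the paper uses the annulus $1<|\xi|<2$, where $|\xi|^{2s}$ is real analytic, so the profile and its fractional Laplacian both decay like $e^{-c|x|^\delta}$. You also use a synthesis operator, invert $I+K$ by a Neumann series in a weighted $\ell^\infty$ space, and add auxiliary points to $\Lambda$. But the step that carries the proof, $\|K\|<1$, is not established, and the mechanism you propose for it would fail.

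Off-diagonal decay like $|\lambda-\mu|^{-N}$ at a fixed scale is useless for $\rho$-separated sets. The ball $B(0,R)$ may contain $\sim e^{CR^\rho}$ points with gaps $\sim Ae^{-BR^\rho}$, so near-diagonal entries are of order one and row sums blow up. The missing idea is to dilate each profile to a scale $r_\gamma=e^{-\kappa|\gamma|^\rho}$ with $\kappa>B$. Then $|\gamma'-\gamma|/r_\gamma\ge Ae^{(\kappa-B)|\gamma|^\rho}$, and the stretched-exponential decay of the profile becomes doubly exponential smallness $\exp(-c_1e^{c_2|\gamma|^\rho})$, which beats the counting bound. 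The dilation costs factors $r_\gamma^{-K}=e^{K\kappa|\gamma|^\rho}$. These losses, together with the counting, can be absorbed only if the weight exponent is strictly larger than $\rho$. Your ordering $\rho'\ge\max\{p',q'\}$, with losses $e^{C|\lambda|^{\rho'}}$, is backwards; the paper takes $\max\{\rho,p,q\}<\eta<\delta<1$.

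Second, smallness from ``decay against the separation'' cannot work for the mixed entries $\psi_\mu(\lambda)$ and $(-\Delta)^s\varphi_\lambda(\mu)$. Nothing separates $\Lambda$ from $M$, so a point of $M$ may sit at distance comparable to $r_\lambda$ from $\lambda$. Dilation multiplies $(-\Delta)^s\varphi_\lambda$ by $r_\lambda^{-2s}=e^{2s\kappa_0|\lambda|^\rho}$, so these entries can even be exponentially large. The paper controls them by normalization and weights rather than decay:
\begin{itemize}
\item It sets $\varphi_\lambda=\varphi((x-\lambda)/r_\lambda)$ and $\psi_\mu=t_\mu^{2s}\psi((x-\mu)/t_\mu)$, with $\varphi(0)=1$, $(-\Delta)^s\varphi(0)=0$, $\psi(0)=0$, $(-\Delta)^s\psi(0)=1$. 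These come from a rank-two argument (Proposition~\ref{prop:phi-psi}), which also disposes of points in $\Lambda\cap M$.
\item It uses the norm $\sup_\lambda e^{\tau|\lambda|^\eta+L|\lambda|^\rho}|\alpha(\lambda)|+\sup_\mu e^{\tau|\mu|^\eta}|\beta(\mu)|$.
\item It chooses $\kappa_0>B_\Lambda$, then $L>2s\kappa_0+C$, then $\kappa_1>B_M$ with $2s\kappa_1>L+C$, where $e^{CR^\rho}$ is the counting bound.
\end{itemize}
The mixed entries are then dominated by $e^{-(L-2s\kappa_0)|\lambda|^\rho}$ and $e^{-(2s\kappa_1-L)|\mu|^\rho}$. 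These are summable however close the two sets are. The finitely many points of small modulus get separately chosen radii (Lemma~\ref{lem:existence_rl}). Without this bookkeeping, or a substitute for it, your Neumann series does not close.

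Finally, in Step~3 the $z_j$ cannot be taken ``far from everything'', since $\Lambda\cup M$ may be $e^{-c|x|^\rho}$-dense. You need a volume-counting argument, as in Proposition~\ref{prop:Y-ela-two-sets}, to keep $\Lambda\cup\{z_j\}$ $\rho$-separated.
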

For every fixed $\alpha,\beta>1$ and $c_1,c_2,s>0$, one verifies by taking $\rho=1/\min\{\alpha,\beta\}$ in Theorem~\ref{thm:nup-strong}, that there exists a nonzero Schwartz function $f$ which vanishes on $\{c_1(\log |k|)^\alpha |k|^{-1}k\}_{k\in \Z^d\setminus\hch{0}}$ and such that $(-\Delta)^s f$ vanishes on $\{c_2(\log |k|)^\beta |k|^{-1}k\}_{k\in \Z^d\setminus\hch{0}}$. Thus, in view of \eqref{eq:rel-dens-Z-L}, the logarithmic threshold on the $\Lambda$ side in Theorem~\ref{thm:UP} is sharp, in the sense that positive lower density $\D^-_{\Phi_{\beta,c}}$ for both zero sets, with $\beta>1$, does not force uniqueness. This is a major difference compared to the Fourier transform, where such a range of non-uniqueness phenomena does not hold \cite{KulikovNazarovSodin2025, RamosSousa2022}. Moreover, the mechanism behind Theorem~\ref{thm:nup-strong} is formulated as an interpolation result; see Theorem~\ref{thm:interpol-NUP} in Subsection~\ref{sec:4.3}.

When $0<s<1$, Theorems~\ref{thm:UP} and~\ref{thm:nup-strong} also admit a natural interpretation in terms of boundary unique continuation for the Caffarelli--Silvestre extension. In this formulation, prescribing the values of $f$ and $(-\Delta)^s f$ on discrete sets corresponds to prescribing, respectively, the boundary trace and the weighted normal derivative of the extension on discrete subsets of the boundary. Thus, Theorem~\ref{thm:UP} establishes a discrete boundary uniqueness principle under quantitative density assumptions, whereas Theorem~\ref{thm:nup-strong} shows that this principle fails for sufficiently sparse discrete sets, even in highly regular classes. This point of view is made explicit in Corollary~\ref{cor:UC-extension}.

We also include two further non-uniqueness constructions whose roles are different, since they show that the functions can be chosen with additional analytic structure. The first uses a Fourier-side construction on lattices, proving that the functions can be chosen bandlimited, and hence as restrictions of entire functions of exponential type.
\begin{theorem}\label{thm:BL-nup}
Let $s>0$, $M\in \GL_d(\R)$ and $\Hm = \{f \in \Sp(\R^d) \st \widehat{f} \in C_c^\infty(\R^d)\}$. Then $(M\Z^d,M\Z^d)$ is a $\NUP$ for $\Hm$. 
\end{theorem}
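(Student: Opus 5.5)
The plan is to build $f$ directly on the Fourier side. Let $L=\{\eta\in\R^d \st \eta\cdot Mk\in\Z \text{ for all } k\in\Z^d\}=M^{-T}\Z^d$ be the dual lattice of $M\Z^d$. By Poisson summation, for $u\in\Sp(\R^d)$ the values $u(Mk)$, $k\in\Z^d$, are (up to the factor $|\det M|^{-1}$ and a sign convention) the Fourier coefficients of the $L$-periodic function $\xi\mapsto\sum_{\eta\in L}\widehat u(\xi+\eta)$. Hence:
\begin{itemize}
\item $u|_{M\Z^d}=0$ if and only if this periodization vanishes identically.
\item We apply this to $u=f$ and to $u=(-\Delta)^sf$, where $\widehat{(-\Delta)^sf}(\xi)=(2\pi|\xi|)^{2s}\widehat f(\xi)$.
\item If $\widehat f\in C_c^\infty$ is supported away from the origin, then $(2\pi|\xi|)^{2s}\widehat f$ is again in $C_c^\infty$. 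So $(-\Delta)^sf\in\Sp(\R^d)$, pointwise evaluation is legitimate, and Poisson summation applies.
\end{itemize}

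\textbf{Construction.} Pick three distinct points $\eta_1,\eta_2,\eta_3\in L$ and a base point $\xi_0$ with $|\xi_0+\eta_1|,|\xi_0+\eta_2|,|\xi_0+\eta_3|$ pairwise distinct and all nonzero. For instance, take $\xi_0$ generic and $\eta_j=j\eta$ for a fixed $\eta\in L\setminus\{0\}$; the three norms are then distinct for $\xi_0$ outside a finite union of hyperplanes. Choose $r>0$ small so that:
\begin{itemize}
\item the balls $B(\xi_0+\eta_j,r)$ avoid the origin;
\item $B(\xi_0,r)$ is disjoint from all its nontrivial $L$-translates;
\item the weights $w_j(\xi)\defi(2\pi|\xi+\eta_j|)^{2s}$ are smooth on $B(\xi_0,r)$ and the vector $w(\xi)=(w_1,w_2,w_3)(\xi)$ is nowhere parallel to $(1,1,1)$ there.
\end{itemize}
Define $c(\xi)\defi(1,1,1)\times w(\xi)$ (the cross product in $\R^3$). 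This is smooth and nonvanishing on $B(\xi_0,r)$, and satisfies $\sum_j c_j=0$ and $\sum_j c_jw_j=0$. Take $0\ne\psi\in C_c^\infty(B(\xi_0,r))$ and set
\[
\widehat f(\xi)=\sum_{j=1}^3 c_j(\xi-\eta_j)\,\psi(\xi-\eta_j).
\]

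\textbf{Verification.}
\begin{itemize}
\item $\widehat f\in C_c^\infty$, so $f\in\Hm$.
\item $f\neq0$, because the three pieces have disjoint supports and some $c_j\psi\ne0$.
\item By the choice of $r$, on $B(\xi_0,r)$ the $L$-periodization of $\widehat f$ equals $\sum_j c_j(\xi)\psi(\xi)=0$. It vanishes elsewhere by periodicity, since every $L$-orbit meeting $\operatorname{supp}\widehat f$ meets $B(\xi_0,r)$ exactly once.
\item Likewise, the periodization of $(2\pi|\cdot|)^{2s}\widehat f$ equals $\sum_j w_j(\xi)c_j(\xi)\psi(\xi)=0$ on $B(\xi_0,r)$.
\end{itemize}
Thus $f$ and $(-\Delta)^sf$ both vanish on $M\Z^d$, so $(M\Z^d,M\Z^d)$ is a $\NUP$ for $\Hm$.

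\textbf{Main obstacle.} The only delicate point is the justification of Poisson summation for $(-\Delta)^sf$. This is handled by keeping $\operatorname{supp}\widehat f$ away from $0$, so that no singularity of $|\xi|^{2s}$ enters. A secondary point is the nondegeneracy of $c(\xi)$, which is guaranteed by choosing translates with distinct norms at $\xi_0$.

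Varying $\psi$ also shows that the space of such $f$ is infinite-dimensional.
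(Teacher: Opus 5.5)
Your proposal is correct and follows essentially the same route as the paper: periodize over the dual lattice $M^{-t}\Z^d$, place $\widehat f$ on three translates (by $\eta, 2\eta, 3\eta$) of a small set, and solve the two homogeneous linear equations pointwise with a smooth, nonvanishing solution. The differences are cosmetic. You use the cross product $(1,1,1)\times w(\xi)$ where the paper uses explicit quotient formulas valid off the hyperplane $|\xi+2v|=|\xi+v|$, and you state explicitly that $\operatorname{supp}\widehat f$ avoids the origin, which the paper leaves implicit.
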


The second construction is specific to dimension $d=1$ and to $s=\frac12$. It uses known interpolation results in de~Branges spaces to give real-analytic functions at the power scale, as stated below.
\begin{theorem}\label{thm:debranges-nup}
Let $0<\alpha, p,q <1$ and let $\Lambda,M\subset \R$ be discrete. If both $\D^+_{G_\alpha}(\Lambda)$ and $\D^+_{G_\alpha}(M)$ are finite, then $(\Lambda, M)$ is a $\mathsf{NUP}_{\frac{1}{2}}$ for $\mathcal{S}_{1/2}^{p,q}(\mathbb{R})\cap \mathscr{H}(\R)$.
\end{theorem}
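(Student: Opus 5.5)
The plan is to exploit the special structure of $s=\frac12$ in dimension one. There $(-\Delta)^{1/2}f=\mathcal H f'$, where $\mathcal H$ is the Hilbert transform, and $\mathcal H$ acts diagonally on the two Hardy components of $f$.

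\textbf{Reduction to a Hardy-space problem.} I look for $f$ of the form $f=h+\overline{h}=2\,\mathrm{Re}\,h$ on $\R$, where $h$ is the boundary value of a function holomorphic and decaying in $\C_+$. Then $\mathcal H h=-ih$ and $\mathcal H\overline h=i\overline h$, so
\begin{equation*}
(-\Delta)^{1/2}f=\mathcal H(h'+\overline{h'})=-i\,h'+i\,\overline{h'}=2\,\mathrm{Im}\,h' .
\end{equation*}
The two conditions $f|_\Lambda=0$ and $(-\Delta)^{1/2}f|_M=0$ therefore become the real-linear interpolation conditions
\begin{equation*}
\mathrm{Re}\,h(\lambda)=0 \ (\lambda\in\Lambda), \qquad \mathrm{Im}\,h'(\mu)=0 \ (\mu\in M).
\end{equation*}
I take $h=F/E$, where $E$ is a Hermite--Biehler function and $F$ lies in the de~Branges space $\mathcal H(E)$, so that $h\in H^2(\C_+)$. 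If $E$ is chosen real-analytic and zero-free on $\R$, then $f$ is real analytic, which gives membership in $\mathscr H(\R)$.

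\textbf{Choice of $E$.} I choose $E$ so that $|E(x)|\asymp e^{|x|^{\gamma}}$ on $\R$ and its phase $\varphi$ grows like $|x|^{\gamma}$, with $\max\{\alpha,p,q\}<\gamma<1$. Such $E$ can be built as a canonical product of order $\gamma$ with zeros in $\C_-$, or taken from the standard power-type examples. The decay $|F(x)|/|E(x)|\lesssim e^{-c|x|^{\gamma}}$ then needs a pointwise estimate for $F\in\mathcal H(E)$. I would obtain it either from an additional interpolation condition or by taking $F=E_1G$ with $E_1$ a smaller-order weight. Cauchy estimates on discs of radius comparable to $1$ pass this bound to $h'$ and $h''$, giving $|f(x)|\lesssim e^{-c|x|^p}$ and $|(-\Delta)^{1/2}f(x)|\lesssim e^{-c|x|^q}$. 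Schwartz regularity of $f$ and $\mathcal H f'$ follows from the same estimates applied to all derivatives.

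\textbf{Interpolation.} The density of $\Lambda\cup M$ measured with respect to $\mathrm d\varphi\asymp |x|^{\gamma-1}\dx$ is zero, because $\D^+_{G_\alpha}$ is finite and $\gamma>\alpha$. The known sufficient conditions for (Hermite, i.e.\ value plus derivative) interpolation in de~Branges spaces with such regular phase therefore apply; separation is harmless here since $\varphi'\to0$ only polynomially. These conditions let me prescribe $h(\lambda)$ on $\Lambda$ and $h'(\mu)$ on $M$ with weighted $\ell^2$ data. I then fix any nonzero $F_0$ with the required decay, solve for a correction $F_1$ with $F_1/E=-\mathrm{Re}(F_0/E)$ on $\Lambda$ and $(F_1/E)'=-i\,\mathrm{Im}((F_0/E)')$ on $M$, and set $F=F_0+F_1$. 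Choosing $F_0$ from an infinite-dimensional family, for instance $F_0$ with prescribed nonzero values at extra points far from $\Lambda\cup M$, gives $f\not\equiv0$.

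\textbf{Main obstacle.} I expect the hardest step to be combining interpolation with the quantitative Gelfand--Shilov decay, that is, ensuring that the corrector $F_1$ also satisfies $|F_1/E|\lesssim e^{-c|x|^{p}}$ on $\R$. My first attempt is to perform the interpolation in a weighted space $\mathcal H(E)$ with $E$ of order $\gamma'$ strictly between $\max\{p,q,\alpha\}$ and $\gamma$. I would then use a Lagrange-type (Riesz-basis) representation of the solution, in which each basis element decays like $1/|E|$ away from its node, together with the gap $\gamma-\gamma'>0$.
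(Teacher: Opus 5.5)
\textbf{There is a genuine gap: the density step has the scale reversed, and the error is fatal.} Your use of $H^2(\C_+)$ to turn $(-\Delta)^{1/2}$ into a derivative is the same key idea as the paper. The problem is the choice of $E$. Since $G_\alpha(y)=\sgn(y)|y|^{1/\alpha}$ with $1/\alpha>1$, the hypothesis $\D^+_{G_\alpha}(\Lambda)<\infty$ allows $\#(\Lambda\cap[-X,X])\asymp X^{1/\alpha}$, which is a \emph{superlinear} count. The model case is $\Lambda=\Z_\alpha=\{\pm n^\alpha\}$, whose gaps shrink like $|x|^{1-1/\alpha}$. With respect to $\mathrm d\varphi\asymp|x|^{\gamma-1}\dx$ with $\gamma<1$, such a set has infinite density, not zero density. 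So no interpolation theorem applies.

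Worse, your ansatz cannot produce a nonzero $f$ at all. For real $x$,
\[
\mathrm{Re}\,\frac{F}{E}(x)=\frac{F(x)E^*(x)+F^*(x)E(x)}{2|E(x)|^2}.
\]
The numerator $FE^*+F^*E$ is entire of order at most $\gamma$, because elements of $\mathcal H(E)$, and your variant $F=E_1G$, have order at most that of $E$. An entire function of order $\gamma$ has $O(r^{\gamma+\varepsilon})$ zeros in $|z|<r$. Hence vanishing on $\Z_\alpha$ forces $FE^*+F^*E\equiv0$, that is, $f=2\,\mathrm{Re}(F/E)\equiv0$. The same argument excludes every $E$ of order below $1/\alpha$. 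So the real obstruction is not the decay of the corrector that you flagged as the main obstacle.

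\textbf{What the paper does instead.} It takes the Blaschke product $\Theta_\ell$ with zeros $\pm n^\alpha+iy_n$ of multiplicity $\ell$. Its phase satisfies $\varphi_{\alpha,\ell}'\asymp\ell(1+|x|)^{1/\alpha-1}$, a growth matched to $G_\alpha$. Hence $\D^+_{\varphi_{\alpha,\ell}}(\Lambda)$ and $\D^+_{\varphi_{\alpha,\ell}}(M)$ are $\lesssim C_0/\ell<\pi^{-1}$ for large $\ell$, and the phase measure is doubling.

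The paper also avoids Hermite interpolation entirely. It splits $\Lambda$ and $M$ into finitely many uniformly $\varphi_{\alpha,\ell}$-discrete pieces $\Gamma_j$. It uses Theorem~A only to get $g_j\in\mathcal H(E_\ell)$ vanishing on $\Gamma_j$ with $g_j(\lambda')=1$, and then sets $h=\varphi\prod_j(g_j/E_\ell)^2$. Because $\widehat h$ is supported in $\R_+$, one has $(-\Delta)^{1/2}h=\frac{1}{2\pi i}h'$. The double zeros on $\Lambda\cup M$ therefore give both conditions at once, with no real parts needed. This works even where $\Lambda$ and $M$ share points or come arbitrarily close, and there your weighted Hermite interpolation on $\Lambda\cup M$ would also break down for lack of separation.

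Finally, the paper does not extract decay from $1/E$. Theorem~B gives only polynomial bounds for the derivatives of $g_j/E_\ell$ on $\R$. The subexponential decay $e^{-c|x|^{\max\{p,q\}}}$ comes from the band-limited Gevrey-type factor $\varphi$ of Proposition~\ref{prop:phi-psi}, whose Fourier support lies in $(1,2)$.
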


Furthermore, the proofs of Theorem~\ref{thm:UP}~\hyperref[item:2-thmUP]{$(ii)$} and Theorem~\ref{thm:debranges-nup} imply that the same pairs yield uniqueness and non-uniqueness, respectively, for the \emph{Hilbert transform}, as stated in Corollary~\ref{cor:HT} below. Here the Hilbert transform is defined, for $f \in \Sp(\R)$, as
\begin{equation*}
    {\bf H} f(x) = \mathrm{p.v.}\frac{1}{\pi}\int_{\R} \frac{f(x-t)}{t}\dt. 
\end{equation*}
\begin{corollary}\label{cor:HT}
Let $0<\alpha, p,q <1$ and $\Lambda, M \subset \R$ be discrete.
\begin{enumerate}
    \item\label{item:1-HT} If $\exists\, c>0$ such that $\D^-_{\Phi_{1,c}}(\Lambda)$ and $\D^-_{G_{\alpha}}(M)$ are positive, then $(\Lambda,M)$ is a $\mathsf{UP}_{\bf H}$ for $\Sp(\R)$.
    \item\label{item:2-HT} If both $\D^+_{G_\alpha}(\Lambda)$ and $\D^+_{G_\alpha}(M)$ are finite, then $(\Lambda, M)$ is a $\mathsf{NUP}_{\bf H}$ for $\mathcal{S}_{\bf H}^{p,q}(\mathbb{R})\cap \mathscr{H}(\R)$, where
    \begin{equation*}
        \mathcal{S}_{\bf H}^{p,q}(\mathbb{R}) \defi \hch{ f\in\mathcal{S}(\mathbb{R}) \st \exists \,c=c_f>0 \text{ such that } \sup_{x\in\mathbb{R}} |f(x)| e^{c|x|^p} + \sup_{x\in\mathbb{R}} |{\bf H} f(x)| e^{c|x|^q} < \infty}. 
    \end{equation*}
\end{enumerate}
\end{corollary}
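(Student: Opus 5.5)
The plan is to reduce both items to the $s=\tfrac12$ case, where the Hilbert transform and the half Laplacian are closely tied. In one dimension, $\widehat{{\bf H}f}(\xi)=-i\,\sgn(\xi)\widehat f(\xi)$ and $\widehat{(-\Delta)^{1/2}f}(\xi)=2\pi|\xi|\widehat f(\xi)$. Hence
\begin{equation*}
    (-\Delta)^{1/2}f = 2\pi\,(\mathbf{H}f)' \quad\text{and}\quad \mathbf{H}f = \tfrac{1}{2\pi}\,\mathbf{H}\big((-\Delta)^{1/2}\big)\ldots
\end{equation*}
This identity alone is not directly usable, because a zero of ${\bf H}f$ does not give a zero of its derivative. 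I will therefore not transfer the statements themselves. Instead I will reuse the mechanisms of the two proofs, as the sentence before the corollary indicates. In the proof of Theorem~\ref{thm:UP}~(ii), the operator $(-\Delta)^{1/2}$ (and general $(-\Delta)^s$) enters only through the multiplier $|\xi|^{2s}$. That proof presumably has two parts. First, the density $\D^-_{\Phi_{1,c}}(\Lambda)>0$ forces super-exponential decay of $f$, which makes $\widehat f$ entire. Second, the density $\D^-_{G_\alpha}(M)>0$ forces decay of order $e^{-c|x|^{\alpha'}}$ of $Tf$, which yields holomorphy of $\widehat{Tf}$ in a neighbourhood or a sector. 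Comparing $\widehat{Tf}=m\widehat f$, with $m$ the multiplier, on each half line then gives a contradiction unless $\widehat f=0$.

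For item (i), the steps are as follows. (a) From $f|_\Lambda=0$ and $\D^-_{\Phi_{1,c}}(\Lambda)>0$, deduce, as in the proof of Theorem~\ref{thm:UP}~(i)/(ii), that $|f(x)|\lesssim e^{-e^{\epsilon|x|}}$; consequently $\widehat f$ extends to an entire function of order one growth in the imaginary direction. (b) For $g={\bf H}f$, which is smooth and bounded with derivatives controlled since $f\in\Sp(\R)$, deduce from $g|_M=0$ and $\D^-_{G_\alpha}(M)>0$ a decay estimate of the same kind the paper obtains for $(-\Delta)^{1/2}f$. That step only used the zero set and regularity of the function, not the operator, so it applies verbatim. (c) Then $\widehat g=-i\sgn(\xi)\widehat f$ is smooth, so $\widehat f(0)=0$ together with all its derivatives. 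Combined with the growth and analyticity information and the restrictions on each half line $\xi>0$ and $\xi<0$, the analytic continuation argument of Theorem~\ref{thm:UP}~(ii) should show that the entire functions $\widehat f$ and $\pm i\widehat g$ coincide on half lines and produce a global obstruction. Concretely, I expect to argue that $\widehat g$ coincides with the entire function $-i\widehat f$ on $(0,\infty)$ and with $i\widehat f$ on $(-\infty,0)$. Rapid decay of $g$ gives some quasi-analytic or analytic regularity of $\widehat g$ near $0$, which forces $\widehat f\equiv-\widehat f$, i.e. $\widehat f=0$. The main obstacle is this step, because $\sgn$ is cruder than $|\xi|$. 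It is where the argument for the $|\xi|$ multiplier must be checked not to use the specific form of the multiplier beyond its being locally analytic off $0$ with distinct analytic branches on the two half lines. If that proof is carried out with $|\xi|^{2s}$ playing exactly this role, the substitution $\sgn$ for $|\xi|$ goes through.

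For item (ii), I will follow the de~Branges construction of Theorem~\ref{thm:debranges-nup}. There one produces real-analytic $f$ vanishing on $\Lambda$ whose half Laplacian vanishes on $M$. This presumably works by interpolating in a de~Branges space for the boundary values $F=f+i{\bf H}f$ of a function holomorphic in the upper half-plane, since $(-\Delta)^{1/2}f=({\bf H}f)'$ up to a constant. In that construction I will replace the derivative condition by the pointwise condition: prescribe $\mathrm{Re}\,F=0$ on $\Lambda$ and $\mathrm{Im}\,F=0$ on $M$. The finite upper densities again guarantee interpolation, and the decay bounds with $p,q<1$ follow from the same weight estimates, giving a nonzero element of $\mathcal{S}^{p,q}_{\bf H}(\R)\cap\mathscr H(\R)$.
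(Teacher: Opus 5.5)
\textbf{Item (i): valid, by a different route.} The paper does not rerun the Fourier argument. It reduces to real-valued $f$ (since ${\bf H}$ maps real functions to real functions) and uses $2\pi(-\Delta)^{1/2}f=({\bf H}f)'$. Rolle's theorem between consecutive points of $M$ then gives a zero set $M'$ of $({\bf H}f)'$ with $\D^-_{G_\alpha}(M')>0$, and Theorem~\ref{thm:UP}~(ii) with $s=\frac12$ finishes. So your remark that zeros of ${\bf H}f$ give no zeros of its derivative overlooks Rolle.

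Your alternative also works. You replace $|\xi|^{2s}$ by $-i\sgn(\xi)$ in the proof of Theorem~\ref{thm:UP}~(ii). The argument behind Proposition~\ref{prop:JM-decay} only needs ${\bf H}f$ to have bounded derivatives of all orders, which holds since $\int|\xi|^j|\widehat f(\xi)|\dxi<\infty$. This route avoids the real-valued reduction and the density check for the Rolle points. Two corrections are needed.
\begin{itemize}
\item In (a), the hypothesis on $\Lambda$ gives only $|f(x)|\lesssim e^{-|x|/(2c)}$ (Proposition~\ref{prop:Hol-decay}), hence analyticity of $\widehat f$ in the strip $|\IM z|<(4\pi c)^{-1}$. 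It does not give double-exponential decay: $\sin(\pi e^{x/c})\sin(\pi e^{-x/c})e^{-x^2}$ is Schwartz, vanishes on $\{\pm c\log n\}$, and decays only like a Gaussian. The strip is enough.
\item In (c) there is no remaining obstacle. Lemma~\ref{lem:non-smooth-one-dim} does not literally apply, since $\sgn$ is discontinuous. However, comparing one-sided limits at $0$ of the derivatives of the smooth function $\widehat{{\bf H}f}$ gives $-i\widehat f^{(k)}(0)=i\widehat f^{(k)}(0)$ for all $k$. The identity theorem on the strip then ends the proof. No quasi-analyticity of $\widehat{{\bf H}f}$ is needed, and the merely faster-than-polynomial decay of ${\bf H}f$ would not provide it.
\end{itemize}

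\textbf{Item (ii): genuine gap.} You guessed a mechanism that is not the one in Theorem~\ref{thm:debranges-nup}, and your proposed modification cannot be carried out within it. The paper never interpolates $f+i{\bf H}f$ or imposes a derivative condition on ${\bf H}f$. It builds $h=\varphi\prod_j f_j^2$, where:
\begin{itemize}
\item each $f_j\in K^2_{\Theta_\ell}\subset H^2(\C_+)$ vanishes on one uniformly $\varphi_{\alpha,\ell}$-discrete piece $\Gamma_j$ of $\Lambda\cup M$, via Theorem~A;
\item $\varphi$ is the Gevrey-type function of Proposition~\ref{prop:phi-psi} with $\supp\widehat\varphi\subset(1,2)$;
\item the squares serve only the identity $(-\Delta)^{1/2}h=\frac{1}{2\pi i}h'$.
\end{itemize}
Theorem~A prescribes complex values; it cannot prescribe real parts on one set and imaginary parts on another. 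Nor can such conditions be imposed factor by factor, because real and imaginary parts are not multiplicative. The product is forced, since $\Lambda\cup M$ need not be uniformly discrete. Likewise, the decay $e^{-c|x|^r}$ does not come from weight estimates in $\mathcal{H}(E_\ell)$. Theorem~B and property $(\mathsf P)$ give only polynomial bounds on the $f_j$; the subexponential decay comes entirely from $\varphi$.

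The missing observation is that $\supp\widehat h\subset\R_+$ implies ${\bf H}h=-ih$. Hence the function already built, which vanishes on $\Lambda\cup M$, has ${\bf H}h$ vanishing there too, and lies in $\Sp_{\bf H}^{r,r}(\R)\cap\mathscr H(\R)$ with $r=\max\{p,q\}$. In your own language: take $F=h$ with full zeros on $\Lambda\cup M$. Then $f=\mathrm{Re}\,h$ and ${\bf H}f=\mathrm{Im}\,h$ both vanish there, and no new interpolation problem arises.
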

Finally, Remarks~\ref{rem:7.1} and~\ref{rem:7.2} record how the arguments underlying Theorem~\ref{thm:UP}\hyperref[item:1-thmUP]{$(i)$} and Theorem~\ref{thm:BL-nup} extend to certain Fourier multiplier operators.

\subsection{Contents}
The paper is organized as follows. Section~\ref{sec:2} fixes the notation used throughout the paper and recalls the basic properties of the fractional Laplacian. Next, in Section~\ref{sec:3}, we prove the uniqueness pair results, namely Theorem~\ref{thm:UP} and the first item of Corollary~\ref{cor:HT}, and discuss the obstruction to frame stability. Section~\ref{sec:4} establishes some properties of Gevrey-class functions and of $\rho$-separated sets, then proves the interpolation result, Theorem~\ref{thm:interpol-NUP}, and uses it to prove Theorem~\ref{thm:nup-strong}. In Section~\ref{sec:5}, we first prove Theorem~\ref{thm:BL-nup}, then we introduce the relevant notions of density with respect to a measure, recall the necessary background on de~Branges and model spaces, and then prove Theorem~\ref{thm:debranges-nup} and item~\hyperref[item:2-HT]{$(ii)$} of Corollary~\ref{cor:HT}. Section~\ref{sec:6} provides an interpretation of these results in the context of the boundary value problem. Section~\ref{sec:7} discusses extensions to other multiplier operators. Finally, Appendix~\ref{app:A} collects an auxiliary result used throughout the paper.

\section{Preliminaries}\label{sec:2}

\subsection{Conventions and function spaces}\label{sec:2.1}
We write $f \lesssim g$ if $f \leq C g$ for some constant $C > 0$. More generally, $f \lesssim_v g$ if $C$ depends only on $v$. In general, $C$ denotes a positive constant that may change from line to line or from paragraph to paragraph in the argument. Similarly, we write $f \asymp g$ if $f\lesssim g$ and $g\lesssim f$.

We denote by $\mathbb{Z}_{+}$ the set of nonnegative integers and $\mathbb{N} = \Z_+\setminus\hch{0}$. Similarly, $\mathbb{R}_{+}$ and $\mathbb{R}_{>0}$ denote the sets of nonnegative and positive real numbers, respectively. We write $\mathbb{C}_{+}=\{z\in\mathbb{C} \st \IM (z)>0\}$ for the upper half-plane and $\mathbb{R}_{+}^{d+1}=\mathbb{R}^{d}\times\mathbb{R}_{>0}$ for the upper half-space.

We use $\#S$ for the cardinality of $S$, and $|S|$ for its Lebesgue measure, when defined. A set $S\subset \mathbb{R}^d$ is called discrete if it has no accumulation point in $\mathbb{R}^d$.

For $k \in \Z_+$, $\eta \in (0,1)$, $r\in \R$, and $\delta > 0$, we denote the space of holomorphic functions in a domain $\Omega\subset \mathbb{C}^d$ by $\mathscr{H}(\Omega)$, the space of analytic functions on $\R$ by $\mathscr{H}(\R)$, the Hölder spaces by $C^{k,\eta}(\R^d)$, and $H^r(\R^d)$ for the usual $L^2$-based Sobolev space. The Hardy space for the upper half-plane is denoted by $H^2(\C_+)$. The Gevrey class $G^\frac{1}{\eta}(U)$, for $U\subset \R^d$ an open set, is defined in Subsection~\ref{sec:4.1}. The de~Branges spaces $\H(E)$ together with the model spaces $K^2_\Theta$ are defined in Subsection~\ref{sec:5.3}. We also consider
\begin{equation*}
    L^1_\delta(\R^d) \defi \hch{ f \in L_{\operatorname{loc}}^1(\R^d) \st \int_{\R^d} \frac{|f(x)|}{1 + |x|^{d+2\delta}} \dx < \infty}.
\end{equation*}
For $0<s<1$ and\footnote{We assume that $\sigma$ is sufficiently small so that $2s + \sigma < 1$ for $0 < s < \frac{1}{2}$ and $2s - 1 + \sigma < 1$ for $\frac{1}{2} \le s < 1$. This restriction will be implicit in the remainder of the paper.} $\sigma>0$ small, let 
\begin{equation*}
C_{s,\sigma}(\R^d) \defi 
\begin{cases}
    C^{0,2s+\sigma}(\R^d), & \text{ for } 0<s<\frac{1}{2},
    \\C^{1,2s-1+\sigma}(\R^d), & \text{ for } \frac{1}{2}\leq s<1,
\end{cases}
\end{equation*}
and
\begin{equation}\label{eq:def-Xs}
    X_{s,\sigma}(\R^d) \defi L^1_{s}(\R^d) \cap C_{s,\sigma}(\R^d).
\end{equation}

\begin{remark}
It is natural to compare the space defined in \eqref{eq:def-Xs} with a Sobolev space. For $d=1$, Sobolev embedding gives $H^{\frac{1}{2}+(2s+\sigma)+\varepsilon}(\R)\subset X_{s,\sigma}(\R)$ for $\varepsilon>0$. So, for $s<3/4$ and $\sigma$ small, $X_{s,\sigma}(\R)$ is a larger space than $H^2(\R)$, since $H^2(\R)\subsetneq X_{s,\sigma}(\R)$.
\end{remark}

\subsection{The fractional Laplacian}\label{sec:2.2}

Let $s>0$. For $f\in \Sp(\R^d)$, we define the fractional Laplacian of order $s$, with the normalization used throughout this paper, by
\begin{equation}\label{eq:def-frac-Lapl}
    \mathcal F\hp{(-\Delta)^s f}(\xi) \defi |\xi|^{2s}\widehat f(\xi).
\end{equation}
This convention will be used whenever $s>0$ appears below. Moreover, \eqref{eq:def-frac-Lapl} extends for $f\in H^s(\R^d)$.

In the range $0<s<1$, the operator $(-\Delta)^s$ also admits the Caffarelli--Silvestre extension \cite{Caffarelli2007}, which interprets the fractional Laplacian as a Dirichlet--to--Neumann operator. More precisely, consider the extension $u : \mathbb{R}^{d+1}_+ \to \mathbb{C}$ solving
\begin{equation*}
\begin{cases}
    \mathrm{div}(y^{1-2s} \nabla u) = 0 &\text{ in } \mathbb{R}^{d+1}_+, 
    \\ u(\cdot,0) = f &\text{ on } \R^d,
\end{cases}
\end{equation*}
with $y>0$ denoting the extended variable. Assume in addition that $u$ is smooth and weakly vanishes as $y\to \infty$. Then the fractional Laplacian of $f$ can be recovered through the weighted normal derivative
\begin{equation}\label{eq:frac-L-limit}
    (-\Delta)^s f(x) = - C_{s} \lim_{y \to 0^+} y^{1-2s} \partial_y u(x,y),
\end{equation}
where $C_{s}$ is a constant depending only on $s$. In particular, for the Schwartz class setting, this formula \eqref{eq:frac-L-limit} coincides with the well-known singular integral representation
\begin{equation}\label{eq:def-FL-pv}
    (-\Delta)^s f(x) = c_{d,s}\mathrm{p.v.} \int_{\mathbb{R}^d} \frac{f(x) - f(w)}{|x - w|^{d+2s}}\dw,
\end{equation}
where $c_{d,s}$ is a normalization constant, which provides a direct link to the Fourier definition, as \eqref{eq:def-frac-Lapl} and \eqref{eq:def-FL-pv} are equivalent in $\mathcal{S}(\R^d)$, and we refer to \cite{Landkof1972} for a detailed proof of this equivalence. Moreover, it is well known that if $f \in \Sp(\R^d)$, then $(-\Delta)^s f$ belongs to the class
\begin{equation}\label{eq:rang-FL-S}
    \mathcal{L}_s(\R^d) = \hch{\psi \in C^\infty(\R^d) \st (1 + |x|^{d+2s}) \partial^\nu \psi(x) \in L^\infty(\R^d) \text{ for every } \nu \in \Z_+^d}.
\end{equation}

\begin{remark}\label{rem:tranla-dila}
Let $g\in\mathcal S(\R^d)$, $a\in\R^d$, $r>0$, $\sigma\in\R$, and $h(x)=r^\sigma g\hp{\frac{x-a}{r}}$. It follows immediately from the identity
$\widehat h(\xi)=r^{\sigma+d}e^{-2\pi i a\cdot \xi}\widehat g(r\xi)$ that $h\in\mathcal S(\R^d)$. In addition, if $(-\Delta)^sg$ is Schwartz, then $(-\Delta)^sh \in \Sp(\R^d)$. Moreover, for every multiindex $\nu \in \Z_+^d$
\begin{equation*}
    \partial^\nu (-\Delta)^s h(x) =r^{\sigma-2s-|\nu|}\hp{\partial^\nu(-\Delta)^s g}\hp{\frac{x-a}{r}}.
\end{equation*}
\end{remark}

Since our focus is on discrete uniqueness pairs, we require at least a notion of pointwise evaluation for the fractional Laplacian. The space $H^s(\mathbb{R}^d)$ by itself is not sufficient for this purpose. On the other hand, working in the Schwartz class is one way to guarantee it, although it is not strictly necessary. A more flexible approach is to impose suitable local regularity together with integrability conditions at infinity. Under these assumptions, as shown in \cite{Silvestre2006}, for a function $f$ in the space $X_{s,\sigma}(\mathbb{R}^d)$ (see \eqref{eq:def-Xs}), the fractional Laplacian can be defined via the pointwise formula \eqref{eq:def-FL-pv}.

The advantage of working with $X_{s,\sigma}(\R^d)$ is that we ensure not only pointwise values but also the regularity of $f$ is quantitatively transferred to $(-\Delta)^sf$. In fact, if $f \in X_{s,\sigma}(\R^d)$, then $f\in C_{s,\sigma}(\R^d)$. Hence, by this observation and \cite[Propositions~2.5 and 2.6]{Silvestre2006}, it follows that $(-\Delta)^s f$ is a Hölder continuous function with Hölder continuity exponent $\sigma$ and
\begin{equation}\label{eq:Silv-regul}
    \hbra{(-\Delta)^s f}_{C^{0,\sigma}} \lesssim_{s,\sigma} \hbra{f}_{C_{s,\sigma}}.
\end{equation}

\section{Uniqueness pairs}\label{sec:3}
\subsection{From zeros to decay}\label{sec:3.1}
The goal of this subsection is to show that the density assumption on the zero sets in Theorem~\ref{thm:UP} yields the required decay estimates. We begin by showing that a positive lower $\Phi_{\alpha,c}$-density gives an exponential-power estimate.
\begin{proposition}\label{prop:zeros-decay}
Let $\alpha,c>0$, $\Phi_{\alpha,c}:\mathbb{R}^d\to \R^d$ be as in \eqref{eq:def-G-Phi}, and let $\Gamma\subset \R^{d}$ be discrete. Assume that $\D^-_{\Phi_{\alpha,c}}(\Gamma)>0$. Then, for every $\delta \in (0,1)$ and every $x \in \mathbb{R}^d$, there exists $\gamma \in \Gamma$ satisfying
\begin{equation*}
    |x-\gamma| \lesssim e^{-\delta (|x|/c)^{1/\alpha}},
\end{equation*}
where the implicit constant is independent of $x$ and of the chosen $\gamma$.
\end{proposition}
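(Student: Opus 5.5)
Write $\Phi=\Phi_{\alpha,c}$ and $D=\D^-_{\Phi}(\Gamma)>0$. The plan is to use the density hypothesis on the image side, where $\Phi(\Gamma)$ meets every large ball, and then pull this back through $\Phi^{-1}$. The exponential growth of $\Phi$ makes $\Phi^{-1}$ strongly contracting far from the origin, and this is what produces the bound $e^{-\delta(|x|/c)^{1/\alpha}}$.

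\emph{Step 1: a fixed-size ball in the image contains a point of $\Phi(\Gamma)$.} The image is $\Phi(\R^d)=\{|y|>1\}$, the exterior of the closed unit ball; since $\Phi(0)=0$ we include the origin where harmless. By definition of $\D^-$ there is $r_0$ such that for $r\ge r_0$ every ball $B\subset\Phi(\R^d)$ with $|B|=r$ satisfies $\#(\Phi(\Gamma)\cap B)\ge \tfrac{D}{2}|B|>0$. Fix $R_0$, the radius corresponding to volume $r_0$. Then every ball of radius $R_0$ inside $\{|y|>1\}$ contains some $\Phi(\gamma)$ with $\gamma\in\Gamma$.

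\emph{Step 2: choose the ball.} Let $x$ have $|x|$ large; for bounded $|x|$ the statement is trivial because $\Gamma\neq\emptyset$. Set $y=\Phi(x)$, so $|y|=e^{(|x|/c)^{1/\alpha}}=:T$. Take the ball $B(y,R_0)$. It lies in $\{|y|>1\}$ once $T>R_0+1$. Step 1 gives $\gamma$ with $|\Phi(\gamma)-y|<R_0$.

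\emph{Step 3: pull back.} Write $\Phi(\gamma)=\omega e^{(t/c)^{1/\alpha}}$ with $t=|\gamma|$. Then:
\begin{itemize}
\item the radial part satisfies $|e^{(t/c)^{1/\alpha}}-T|<R_0$;
\item the angular part satisfies $|\omega - x/|x||\lesssim R_0/T$.
\end{itemize}
The inverse of the radial map is $u\mapsto c(\log u)^{\alpha}$, whose derivative is $c\alpha(\log u)^{\alpha-1}/u$. On $[T-R_0,T+R_0]$ this derivative is $\lesssim (\log T)^{\alpha-1}/T$. By the mean value theorem,
\[
\bigl||\gamma|-|x|\bigr|\lesssim R_0(\log T)^{\alpha-1}T^{-1}.
\]
Combining with the angular error $|x|\cdot R_0/T$ gives
\[
|x-\gamma|\lesssim (1+|x|+(\log T)^{\alpha-1})\,T^{-1}.
\]
Since $\log T=(|x|/c)^{1/\alpha}$, the prefactor is polynomial in $|x|$. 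It is therefore absorbed by $T^{-(1-\delta)}=e^{-(1-\delta)(|x|/c)^{1/\alpha}}$, which gives $|x-\gamma|\lesssim_\delta e^{-\delta(|x|/c)^{1/\alpha}}$. This absorption is exactly why the statement only claims every $\delta<1$.

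\emph{Main obstacle.} The delicate point is the behaviour of $\Phi^{-1}$ near the inner boundary $|y|=1$, i.e.\ $|x|$ small. When $\alpha<1$, the derivative of $c(\log u)^\alpha$ blows up as $u\to1$, so no uniform Lipschitz bound holds there. I would handle this by restricting to $|x|\ge C_0$, so that $T-R_0\ge 2$ and the derivative bound in Step 3 is valid. For $|x|<C_0$ I would take any fixed $\gamma_0\in\Gamma$, giving $|x-\gamma_0|\le C_0+|\gamma_0|$, which is $\lesssim$ the right-hand side since that side is bounded below on this compact range. The constants then depend only on $\alpha,c,\delta,D,\Gamma$, not on $x$.
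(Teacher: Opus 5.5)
Your proposal is correct and follows essentially the same route as the paper. In both arguments the lower density gives a fixed radius $R$ such that every ball of radius $R$ in $\{|u|>1\}$ meets $\Phi_{\alpha,c}(\Gamma)$. You then take $\gamma$ with $|\Phi_{\alpha,c}(\gamma)-\Phi_{\alpha,c}(x)|<R$, pull back through the contracting inverse to get $|x-\gamma|\lesssim (\log T)^{\alpha}/T$ up to lower-order terms, and absorb the polynomial prefactor into $e^{-(1-\delta)(|x|/c)^{1/\alpha}}$.

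The only difference is in the pull-back step. You bound the radial part by the one-dimensional mean value theorem and the angular part by $\bigl|a/|a|-b/|b|\bigr|\le 2|a-b|/|b|$. The paper instead applies the mean value inequality to $\Psi_{\alpha,c}(u)=c(\log|u|)^\alpha u/|u|$ along the segment $[u,v]$, using the radial/tangential bound on $\|D\Psi_{\alpha,c}\|$.

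One small fix: in Step 1, take any finite $\varepsilon\in(0,\D^-_{\Phi_{\alpha,c}}(\Gamma))$ rather than $D/2$, since $D=\infty$ is allowed.
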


\begin{proof}
First, $\Phi_{\alpha,c}:\R^{d}\setminus \hch{0}\to U$ is a bijection, where $U = \{u\in\R^{d} \st |u|>1\}$, with inverse $\Psi_{\alpha,c} : U \to \R^d\setminus\hch{0}$ given by
\begin{equation*}
    \Psi_{\alpha,c}(u) \defi c(\log |u|)^{\alpha}\dfrac{u}{|u|}.
\end{equation*}
By assumption, $\D^-_{\Phi_{\alpha,c}}(\Gamma)>0$. Fix $\varepsilon>0$ such that $0<\varepsilon<\D^-_{\Phi_{\alpha,c}}(\Gamma)$. Using the definition of lower density, there exists $r_1>0$ such that for every $r\ge r_1$
\begin{equation*}
    \inf \hch{\frac{\#(\Phi_{\alpha,c}(\Gamma)\cap B)}{|B|}\st B\subset \Phi_{\alpha,c}(\R^d) \text{ a ball and } |B|=r}\geq \varepsilon.
\end{equation*}
Thus every ball $B\subset \Phi_{\alpha,c}(\R^{d})$ with $|B|\geq r_1$ satisfies $\#(\Phi_{\alpha,c}(\Gamma)\cap B)\ge \varepsilon |B|.$ Choose $R>0$ so large that $\omega_{d}R^{d}\ge r_1$ and $\varepsilon\omega_{d}R^{d} \ge1$, where $\omega_{d}=|B(0,1)|$. Let $B\subset \Phi_{\alpha,c}(\R^{d})$ be any ball of radius $R$. Then
$|B|=\omega_{d}R^{d}\geq r_1$, and hence $\#(\Phi_{\alpha,c}(\Gamma)\cap B)\ge \varepsilon\omega_{d}R^{d}\geq 1$. Thus every ball of radius $R$ contained in $\Phi_{\alpha,c}(\R^{d})$ intersects $\Phi_{\alpha,c}(\Gamma)$. Now let $u\in \Phi_{\alpha,c}(\R^{d})$ satisfy $|u|>R+1$. Then $B(u,R)\subset \Phi_{\alpha,c}(\R^{d})$ and
\begin{equation}\label{eq:ball-inter-G}
    B(u,R)\cap \Phi_{\alpha,c}(\Gamma)\neq \emptyset \quad \text{for all }  |u|>R+1.
\end{equation}

Fix $\delta\in(0,1)$. Let $x\in\R^d$ with $|\Phi_{\alpha,c}(x)|>\max\hch{2R+2,R+e^\alpha}$, and set $u=\Phi_{\alpha,c}(x)$. Then $B(u,R)\subset U$, and \eqref{eq:ball-inter-G} yields a point $v\in B(u,R)\cap \Phi_{\alpha,c}(\Gamma)$. Thus $v=\Phi_{\alpha,c}(\gamma)$ for some $\gamma\in\Gamma$, and $|u-\Phi_{\alpha,c}(\gamma)|\le R$. Therefore, using the inverse function,
\begin{equation}\label{eq:rang-Psi}
    |x-\gamma|=|\Psi_{\alpha,c}(u)-\Psi_{\alpha,c}(v)|.
\end{equation}
The goal now is to apply the mean value inequality. For $|w|>2$, write $\Psi_{\alpha,c}(w)=\phi(|w|)\frac{w}{|w|}$, where $\phi(r)=c(\log r)^{\alpha}$. For a smooth radial map of the form $G(w)=\phi(|w|)w/|w|$, one has the standard decomposition of the differential into radial and tangential parts
\begin{equation*}
    DG(w)h = \phi'(|w|)\ha{ h,\frac{w}{|w|}} \frac{w}{|w|} + \frac{\phi(|w|)}{|w|} \hp{h-\ha{h,\frac{w}{|w|}} \frac{w}{|w|}}.
\end{equation*}
Applying this to $\Psi_{\alpha,c}$, we obtain $\phi'(r)=c\alpha (\log r)^{\alpha-1}/r$, and hence for $|w|>2$
\begin{equation}\label{eq:bd-Der}
    \hn{D\Psi_{\alpha,c}(w)} \leq |\phi'(|w|)|+\frac{\phi(|w|)}{|w|} = \frac{c\alpha(\log |w|)^{\alpha-1}}{|w|} + \frac{c(\log |w|)^\alpha}{|w|} \lesssim_{\alpha, c} \frac{(\log |w|)^\alpha}{|w|}.
\end{equation}
Moreover, every point $w$ on the convex segment $[u,v]$ satisfies $|w|\ge |u|-R\geq \max\hch{R+2,e^\alpha}$, so \eqref{eq:bd-Der} is valid along the whole segment. By the mean value inequality applied to $\Psi_{\alpha,c}$ on $[u,v]$,
\begin{equation*}
    |\Psi_{\alpha,c}(u)-\Psi_{\alpha,c}(v)| \leq |u-v| \sup_{w\in[u,v]}\hn{D\Psi_{\alpha,c}(w)} \lesssim_{\alpha, c} R\sup_{w\in[u,v]} \frac{(\log |w|)^\alpha}{|w|},
\end{equation*}
where the last inequality follows from $|u-v|\le R$ and \eqref{eq:bd-Der}. Since $|w| \ge |u| - R$ for $w \in [u, v]$ and the function $h(y) = (\log y)^\alpha/ y$ is decreasing in $[e^\alpha,\infty)$, it follows by \eqref{eq:rang-Psi}, after possibly enlarging the implicit constants, that
\begin{equation}\label{eq:log-bound}
    |x-\gamma| =|\Psi_{\alpha,c}(u)-\Psi_{\alpha,c}(v)|\lesssim_{R,\alpha, c}\frac{(\log |u|)^\alpha}{|u|}.
\end{equation}
Since $|u|=e^{(|x|/c)^{1/\alpha}}$, if we write $t=(|x|/c)^{1/\alpha}$, then $\log |u|=t$, and \eqref{eq:log-bound} becomes $|x-\gamma|\lesssim_{\alpha, c, R} t^\alpha e^{-t}$. For $0<\delta<1$, the function $t^\alpha e^{-(1-\delta)t}$ is bounded on $\R_+$. Hence 
\begin{equation*}
    |x-\gamma|\lesssim_{\alpha, c, R, \delta} e^{-\delta (|x|/c)^{1/\alpha}}
\end{equation*}
for all $x$ such that $|\Phi_{\alpha,c}(x)|>\max\hch{2R+2,R+e^\alpha}$. Increasing the implicit constant yields the global bound.
\end{proof}

With Proposition~\ref{prop:zeros-decay} established, we can now state the next result, which summarizes the key fact that zeros of a Hölder function imply decay estimates.
\begin{proposition}\label{prop:Hol-decay}
Let $\alpha,c,\sigma>0$ and $\Gamma\subset \R^d$ be discrete. Suppose that $\D^-_{\Phi_{\alpha,c}}(\Gamma)>0$ and that $h:\R^d\to \C$ satisfies the condition
\begin{equation*}
    |h(y)-h(x)| \lesssim |x-y|^\sigma \quad \text{for every }  x,y\in \R^d.
\end{equation*}
If $h$ vanishes on $\Gamma$, then $|h(x)|\lesssim \exp \hp{-\frac{\sigma}{2}\hp{|x|/c}^{1/\alpha}}$ and the implicit constant is independent of $x$.
\end{proposition}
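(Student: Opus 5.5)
The plan is to combine Proposition~\ref{prop:zeros-decay} with the Hölder condition on $h$. Fix $x\in\R^d$ and apply Proposition~\ref{prop:zeros-decay} with a specific choice of $\delta\in(0,1)$. The choice $\delta=\tfrac12$ is enough. This gives $\gamma\in\Gamma$, depending on $x$, with
\begin{equation*}
    |x-\gamma|\le C\, e^{-\frac12 (|x|/c)^{1/\alpha}},
\end{equation*}
where $C$ does not depend on $x$ or on $\gamma$.

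Since $h$ vanishes on $\Gamma$, we have $h(\gamma)=0$. The Hölder hypothesis then gives
\begin{equation*}
    |h(x)|=|h(x)-h(\gamma)|\lesssim |x-\gamma|^\sigma \le C^\sigma e^{-\frac{\sigma}{2}(|x|/c)^{1/\alpha}},
\end{equation*}
which is the claimed bound. The implicit constant is the Hölder constant of $h$ times $C^\sigma$, and neither depends on $x$.

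There is no real obstacle here, only bookkeeping. The uniformity in $x$ has already been arranged in Proposition~\ref{prop:zeros-decay}, which covers every $x\in\R^d$ after enlarging its constant. So no separate treatment of a bounded set of $x$ is needed. The Hölder estimate is assumed globally, so it applies to every pair $(x,\gamma)$ regardless of how far apart they are. Any $\delta\in[\tfrac12,1)$ would also work and would in fact give the stronger exponent $\delta\sigma$; the statement records only $\sigma/2$.
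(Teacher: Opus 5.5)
Your proposal is correct and is exactly the paper's argument: apply Proposition~\ref{prop:zeros-decay} with $\delta=\tfrac12$ to get $\gamma\in\Gamma$ close to $x$, then use $h(\gamma)=0$ and the global H\"older bound to get $|h(x)|=|h(x)-h(\gamma)|\lesssim|x-\gamma|^\sigma\lesssim e^{-\frac{\sigma}{2}(|x|/c)^{1/\alpha}}$. Your side remark is also right: any $\delta\in[\tfrac12,1)$ gives the stronger exponent $\delta\sigma$.
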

\begin{proof}
Let $x\in \R^d$. Using Proposition~\ref{prop:zeros-decay}, for $\delta=\frac{1}{2}$, we can find $\gamma \in \Gamma$ such that
\begin{equation*}
    |h(x)|=|h(x)-h(\gamma)| \lesssim |x-\gamma|^\sigma \lesssim \exp \hp{-\tfrac{\sigma}{2}\hp{|x|/c}^{1/\alpha}}.
\end{equation*}
\end{proof}

\begin{remark}\label{rem:cond-lim-der}
Let $\Gamma=\{\gamma_j\}_{j\in\Z}\subset \R$ be increasing, indexed so that $\gamma_j<\gamma_{j+1}$ for every $j\in\Z$, and such that $\gamma_j\to\pm\infty$ as $j\to\pm\infty$. Let $F$ be one of the functions in \eqref{eq:def-G-Phi} in dimension $d=1$. Then $\D_F^-(\Gamma)>0$ if and only if
\begin{equation}\label{eq:cond-lim-der}
    \limsup_{j\to\pm\infty} F'(\gamma_j)(\gamma_{j+1}-\gamma_j)<\infty.
\end{equation}
Moreover, the stronger condition $ \liminf_{j\to\pm\infty} F'(\gamma_j)(\gamma_{j+1}-\gamma_j)>0$ implies $\D_F^+(\Gamma)<\infty$, but the converse fails in general. In particular, for $F=G_\alpha$, the condition \eqref{eq:cond-lim-der} plays an analogous role in this work, though with a different meaning, as the supercriticality condition studied in \cite{KulikovNazarovSodin2025}.
\end{remark}

In dimension $d=1$, a discrete set $\Gamma$ with $\D^-_{G_\alpha}(\Gamma)>0$ satisfies analogous properties, but with polynomial rather than exponential-power decay and additional regularity assumptions are required. For $\Gamma=\mathbb Z_\alpha$ and Schwartz functions, the result below was proved in \cite[Lemma~4.1]{RamosSousa2022}. Since the same argument applies to sequences satisfying \eqref{eq:cond-lim-der}, we state it in this generality\footnote{Although $(-\Delta)^s f$ need not belong to $\mathcal{S}(\R)$, \eqref{eq:def-frac-Lapl} and the regularity in \eqref{eq:rang-FL-S} are sufficient for the argument.} and omit the proof.

\begin{proposition}\label{prop:JM-decay}
Let $f \in \mathcal{S}(\mathbb{R})$, $0<\alpha,s<1$, and $\Gamma\subset\R$ be discrete with $\D^-_{G_\alpha}(\Gamma)>0$. If $(-\Delta)^s f$ vanishes on $\Gamma$, then, for every $j \in \N$, there exists a constant $R_j>0$ such that for $|x|\ge R_j$
\begin{equation*}
    |(-\Delta)^{s}f(x)|\lesssim \hp{\int_\R |\widehat{f}(\xi)||\xi|^{j+2s}\dxi} |x|^{ j\hp{1-\frac{1}{\alpha}}}.
\end{equation*}
The implicit constant is independent of $x$.
\end{proposition}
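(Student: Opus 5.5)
Write $g=(-\Delta)^s f$, so that $g$ is smooth with $g^{(k)}\in L^\infty(\R)$ for every $k$, and moreover $\|g^{(j)}\|_{L^\infty}\le \int_\R |\widehat f(\xi)|\,(2\pi|\xi|)^{j}|\xi|^{2s}\dxi$ by Fourier inversion of \eqref{eq:def-frac-Lapl}. The plan is a repeated Rolle argument, as in \cite[Lemma~4.1]{RamosSousa2022}, but with the gap sizes controlled by the density hypothesis through Remark~\ref{rem:cond-lim-der} rather than by the explicit form of $\mathbb Z_\alpha$.

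First, enumerate $\Gamma=\{\gamma_k\}$ increasingly (on each side of the origin; if $\Gamma$ is bounded on one side the hypothesis $\D^-_{G_\alpha}(\Gamma)>0$ fails there, since $G_\alpha(\R)=\R$ and large balls far out would contain no points). By \eqref{eq:cond-lim-der} with $F=G_\alpha$, we have $G_\alpha'(\gamma_k)(\gamma_{k+1}-\gamma_k)\le C$ for $|k|$ large. Since $G_\alpha'(y)=\beta|y|^{\beta-1}$ with $\beta=1/\alpha>1$, this gives
\begin{equation*}
\gamma_{k+1}-\gamma_k\lesssim |\gamma_k|^{1-\frac1\alpha}.
\end{equation*}
Because $1-\frac1\alpha<0$, the gaps tend to zero, so $|\gamma_{k+1}|\asymp|\gamma_k|$ for large $|k|$, and on any window of fixed bounded length around $x$ all gaps are $\lesssim |x|^{1-\frac1\alpha}$.

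Second, the Rolle step. Fix $j$ and $x$ large, and pick $j+1$ consecutive points $\gamma_k<\dots<\gamma_{k+j}$ of $\Gamma$ with $x$ in the interval $I=[\gamma_k,\gamma_{k+j}]$ (or adjacent to it). The interval $I$ has length $\lesssim_j |x|^{1-\frac1\alpha}$. Since $g$ vanishes at these $j+1$ points, iterating Rolle shows that $g^{(i)}$ has $j+1-i$ zeros in $I$ for $i\le j$. Then integrate back. The function $g^{(j-1)}$ vanishes somewhere in $I$, so $|g^{(j-1)}|\le |I|\,\|g^{(j)}\|_\infty$ on $I$. Inductively, $|g^{(i)}|\le |I|^{j-i}\|g^{(j)}\|_\infty$ on $I$, and likewise on a neighbourhood of $I$ of comparable size. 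At $i=0$ this yields
\begin{equation*}
|g(x)|\lesssim_j |x|^{j(1-\frac1\alpha)}\int_\R|\widehat f(\xi)||\xi|^{j+2s}\dxi,
\end{equation*}
valid for $|x|\ge R_j$. Here $R_j$ is chosen beyond the threshold where the gap estimate holds and where $j+1$ consecutive points lie within distance $O(|x|^{1-\frac1\alpha})$ of $x$.

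The main obstacle is the first step: justifying the gap bound from the density assumption. Remark~\ref{rem:cond-lim-der} is stated as an equivalence, and I would simply invoke it. If a direct argument is needed, I would reason as follows. Positive lower density means every long enough interval $B\subset\R$ of length $L$ contains a point of $G_\alpha(\Gamma)$. Pulling back through $G_\alpha^{-1}(u)=\sgn(u)|u|^\alpha$ and applying the mean value theorem, every interval of length $\lesssim |x|^{1-\frac1\alpha}$ near $x$ contains a point of $\Gamma$. This is exactly what is needed; the argument mirrors Proposition~\ref{prop:zeros-decay}, with a polynomial rather than logarithmic inverse. The remaining steps are routine.
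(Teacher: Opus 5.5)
Your argument is correct and is essentially the one the paper intends. The paper omits the proof and points to the argument of \cite[Lemma~4.1]{RamosSousa2022}, run for sequences satisfying \eqref{eq:cond-lim-der}; this is exactly your gap bound $\gamma_{k+1}-\gamma_k\lesssim|\gamma_k|^{1-\frac1\alpha}$, followed by iterated vanishing of derivatives and the estimate $\|g^{(j)}\|_\infty\le(2\pi)^j\int_\R|\widehat f(\xi)||\xi|^{j+2s}\dxi$. One small repair is needed: $f$ may be complex-valued, so Rolle cannot be applied to $g$ directly. Instead apply it to $\mathrm{Re}\,g=(-\Delta)^s(\mathrm{Re}\,f)$ and $\mathrm{Im}\,g=(-\Delta)^s(\mathrm{Im}\,f)$ separately. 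This is legitimate because the symbol $|\xi|^{2s}$ is real and even, and $\int_\R|\widehat{\mathrm{Re}\,f}(\xi)||\xi|^{j+2s}\dxi\le\int_\R|\widehat f(\xi)||\xi|^{j+2s}\dxi$ (likewise for $\mathrm{Im}\,f$). Alternatively, use the Hermite--Genocchi form of the interpolation remainder, which does not require real values.
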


\subsection[Proofs of Theorem~UP and Corollary~Hilbert Transform (i)]{Proofs of Theorem~\ref{thm:UP} and Corollary~\ref{cor:HT}~(i)}\label{sec:3.2}

The proof relies on standard tools from complex analysis. To use such machinery, we begin with two results on smoothness.
\begin{lemma}\label{lem:non-smooth-one-dim}
Let $I\subset\R$ be an open interval, $f \in C^\infty(I)$ and $h \in C(I)$ such that $g = f h \in C^\infty(I)$. For any $a \in I$, if $h$ is not smooth at $a$, then $f^{(k)}(a) = g^{(k)}(a)=0$ for all $k \in \mathbb{Z}_+$.
\end{lemma}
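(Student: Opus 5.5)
We argue by contraposition: assuming $f^{(k)}(a)\neq 0$ for some $k$, we show $h$ is smooth near $a$. Once $f$ is known to be flat at $a$, flatness of $g$ follows from the Leibniz rule.

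\emph{Step 1 (quotient where $f(a)\neq 0$).} If $f(a)\neq 0$, continuity gives a neighborhood $J\ni a$ on which $f\neq 0$. There $h=g/f$ is a quotient of smooth functions with nonvanishing denominator, so $h\in C^\infty(J)$. This contradicts non-smoothness at $a$, hence $f(a)=0$.

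\emph{Step 2 (finite-order zero of $f$).} Let $m\ge 1$ be the smallest index with $f^{(m)}(a)\neq0$. By Taylor's formula with integral remainder,
\[
f(x)=(x-a)^m\varphi(x),\qquad \varphi(x)=\frac{1}{(m-1)!}\int_0^1(1-t)^{m-1}f^{(m)}(a+t(x-a))\dt .
\]
Here $\varphi$ is smooth, and $\varphi(a)=f^{(m)}(a)/m!\neq0$, so $\varphi\neq0$ on a neighborhood $J$ of $a$.

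Next we show $g$ vanishes to order $m$ at $a$. On $J$ we have $g=(x-a)^m\varphi h$ with $\varphi h$ continuous, so $g(x)=O(|x-a|^m)$. A smooth function with this bound has $g^{(j)}(a)=0$ for $j<m$, since a first nonzero Taylor coefficient of lower order would contradict the bound. The same Taylor-integral representation then gives $g=(x-a)^m\psi$ with $\psi$ smooth.

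Consequently $(x-a)^m(\psi-\varphi h)=0$ on $J$. For $x\neq a$ this gives $h=\psi/\varphi$, and since both $h$ and $\psi/\varphi$ are continuous, the identity extends to $x=a$. Thus $h=\psi/\varphi$ on all of $J$, which is smooth near $a$, a contradiction. Hence $f^{(k)}(a)=0$ for every $k$.

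\emph{Step 3 (flatness of $g$).} Fix $k$. We use a Taylor expansion rather than differentiating $h$, since $h$ may not be differentiable. By flatness of $f$, for every $N$ we have $|f(x)|\lesssim_N|x-a|^N$ near $a$. As $h$ is continuous, hence locally bounded, $|g(x)|\lesssim_N|x-a|^N$ for every $N$. By the same argument as in Step 2, a smooth function with this decay has all its derivatives vanishing at $a$, so $g^{(k)}(a)=0$ for all $k$.

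The only delicate point is this passage from pointwise vanishing to the Taylor coefficients of $g$. Since $h$ is merely continuous, the Leibniz rule cannot be applied to $fh$ directly, and we must go through the $O(|x-a|^N)$ bounds instead.
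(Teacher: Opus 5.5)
Your argument is correct and complete. The paper omits this proof as a routine calculus exercise, and your contrapositive is exactly the standard argument intended: factor $f=(x-a)^m\varphi$ and $g=(x-a)^m\psi$ with $\varphi(a)\neq 0$ to get $h=\psi/\varphi$ smooth near $a$, then deduce flatness of $g$ from $|g(x)|\lesssim_N |x-a|^N$. It also works verbatim for complex-valued $f,g$, which is the case needed in Lemma~\ref{lem:non-smooth-high-dim}.

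The only blemish is your opening sentence, which says flatness of $g$ follows from the Leibniz rule. As you note yourself in Step~3, that rule is unavailable because $h$ is merely continuous, so the sentence should refer to the Taylor-bound argument instead.
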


The lemma above is a straightforward consequence of standard calculus techniques, and its proof is therefore omitted.
\begin{lemma}\label{lem:non-smooth-high-dim}
Let $0<s<1$ and $U\subset\R^d$ be a neighborhood of the origin. Suppose that $g\in C^\infty(U)$ and that $h(\xi)= |\xi|^{2s}g(\xi)$ satisfies $h\in C^\infty(U)$. Then $\partial^\nu g(0)=0$ for every $\nu\in\Z_+^d$.
\end{lemma}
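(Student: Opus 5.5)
We argue by contradiction on the lowest-order nonvanishing Taylor term of $g$ at the origin. Suppose some derivative $\partial^\nu g(0)\neq 0$. Let $m\ge 0$ be the smallest integer such that the degree-$m$ Taylor polynomial of $g$ at $0$ is nonzero, and let $P_m$ be its homogeneous degree-$m$ part, so $P_m\not\equiv 0$. By Taylor's theorem,
\begin{equation*}
g(\xi)=P_m(\xi)+R(\xi), \qquad |R(\xi)|\lesssim |\xi|^{m+1}
\end{equation*}
near $0$. Consequently $h(\xi)=|\xi|^{2s}P_m(\xi)+O(|\xi|^{m+1+2s})$.

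Since $h$ is smooth, we also have a Taylor expansion $h(\xi)=\sum_{k\le N} Q_k(\xi)+O(|\xi|^{N+1})$ with $Q_k$ homogeneous polynomials of degree $k$. Choose $N=m+1$ and compare the two expansions. First, any $Q_k$ with $k<m+2s$ must vanish: restricting to a ray $\xi=t\omega$ with $t\to0^+$ and using $h=O(|\xi|^{m+2s})$, the lowest nonzero $Q_k$ would dominate. Next, look at the remaining terms of degree $m+2s\le k\le m+1$ (at most the integers $m+1$, and possibly $m$ if $2s$ is... but $2s>0$, so only $k=m+1$ can lie in this range, since $m+2s\le m+1<m+2$). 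Fix a direction $\omega\in S^{d-1}$ with $P_m(\omega)\neq0$. Along the ray,
\begin{equation*}
t^{m+2s}P_m(\omega)+O(t^{m+1+2s}) = Q_{m+1}(\omega)t^{m+1}\mathbf 1_{\{2s\le 1\}}+O(t^{m+2}).
\end{equation*}
Since $0<s<1$, we have $2s\notin\{1\}$ or $2s=1$. If $2s\neq1$, dividing by $t^{m+2s}$ and letting $t\to0^+$ gives $P_m(\omega)=0$ when $2s<1$ (because the right side is $O(t^{m+1})=o(t^{m+2s})$). When $2s>1$ the right side is $o(t^{m+2s})$ as well once $Q_{m+1}=0$; that holds since $m+1<m+2s$. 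Either way we contradict $P_m(\omega)\ne0$.

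The genuine obstacle is the case $s=\tfrac12$, where $|\xi|P_m(\xi)$ has the same homogeneity $m+1$ as a polynomial, so the ray comparison alone does not conclude. Here we use the structure: we would need $|\xi|P_m(\xi)=Q_{m+1}(\xi)$ identically (by homogeneity, matching the degree-$(m+1)$ parts). Then $|\xi|=Q_{m+1}(\xi)/P_m(\xi)$ on the open set where $P_m\neq0$, so $|\xi|$ would be a rational function there. Squaring gives $|\xi|^2P_m^2=Q_{m+1}^2$ as polynomials, so $P_m^2$ divides $Q_{m+1}^2$; hence $P_m\mid Q_{m+1}$ by unique factorization, so $|\xi|$ agrees with a polynomial $L$ of degree one on an open set, hence $|\xi|^2=L^2$ everywhere. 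A degree-one polynomial $L$ cannot satisfy $L(\xi)^2=|\xi|^2$: in $d=1$ this fails on one side because $L(\xi)=c\xi$ changes sign while $|\xi|$ does not (use the sign on the two rays $\pm\omega$, where $|\xi|P_m$ is even/odd opposite to $Q_{m+1}$), and in $d\ge2$ because $|\xi|^2$ is not the square of a linear form. The cleanest uniform route is parity: $Q_{m+1}(-\omega)=(-1)^{m+1}Q_{m+1}(\omega)$ while $|{-\omega}|P_m(-\omega)=(-1)^mP_m(\omega)$, forcing $P_m(\omega)=0$. This parity argument in fact works for every $s$ and would be the first thing I try, giving the contradiction and hence $\partial^\nu g(0)=0$ for all $\nu$.
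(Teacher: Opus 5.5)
Your proof is correct. It shares the paper's skeleton: restrict to lines through the origin and use homogeneity of the Taylor parts. The difference is that you replace the paper's black-box one-variable input with an explicit computation.

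The paper sets $g_\omega(t)=g(t\omega)$ and invokes Lemma~\ref{lem:non-smooth-one-dim}: a smooth function times a continuous factor that is non-smooth at a point can be smooth only if the smooth function is flat there. This gives $g_\omega^{(k)}(0)=0$ for all $k$ and all $\omega$. It then concludes because $\omega\mapsto g_\omega^{(k)}(0)$ is a homogeneous polynomial of degree $k$ vanishing on $\mathbb{S}^{d-1}$.

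You instead take the lowest nonzero homogeneous part $P_m$, pick $\omega$ with $P_m(\omega)\neq0$, and compare $h(t\omega)$ with the Taylor expansion of $h$. The mismatch between the order $m+2s$ and integer degrees disposes of $2s<1$ and $2s>1$; the latter case is exactly where $s<1$ enters, through $m+2>m+2s$. For $2s=1$, comparing the rays $t\omega$ and $-t\omega$ gives $P_m(\omega)=Q_{m+1}(\omega)$ and $(-1)^mP_m(\omega)=(-1)^{m+1}Q_{m+1}(\omega)$, hence $P_m(\omega)=0$.

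What your version buys is self-containment. It effectively proves the paper's omitted one-variable lemma for the specific factor $|t|^{2s}$, and shows where $0<s<1$ is used in each case. What the paper's version buys is modularity. Its one-variable lemma applies to any continuous factor that fails to be smooth at the point, and the sphere argument handles all orders at once, directly rather than by contradiction.

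When you write it up, make two deletions:
\begin{enumerate}
\item Drop the unique-factorization detour. Its assertion that no degree-one $L$ satisfies $L^2=|\xi|^2$ is false for $d=1$ (take $L(\xi)=\xi$).
\item Drop the claim that parity ``works for every $s$''. Parity only handles $2s=1$; the other cases rest on the exponent comparison you already gave.
\end{enumerate}
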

\begin{proof}
Let $r>0$ such that $B(0,r)\subset U$. Fix $\omega \in \mathbb{S}^{d-1}$ and set $g_\omega \in C^\infty(-r,r)$ by $g_\omega(t)=g(t\omega)$. Then $h_\omega(t)=|t|^{2s}g_\omega(t)$ belongs to $C^\infty(-r,r)$, and hence, by Lemma~\ref{lem:non-smooth-one-dim}, $g_\omega^{(k)}(0)=0$ for every $k\in\Z_+$. On the other hand, the chain rule gives
\begin{equation*}
    g_\omega^{(k)}(0) = \sum_{|\nu|=k}\frac{k!}{\nu!}\omega^\nu\partial^\nu g(0).
\end{equation*}
Thus, for each $k\in\Z_+$, $P_k(x) = \sum_{|\nu|=k} \frac{k!}{\nu!}x^\nu \partial^\nu g(0)$ is a homogeneous polynomial of degree $k$ which vanishes on $\mathbb{S}^{d-1}$. By homogeneity, it vanishes identically on $\R^d$, and therefore all its coefficients vanish. Hence $\partial^\nu g(0)=0$ for every $|\nu|=k$.
\end{proof}

\begin{proof}[Proof of Theorem~\ref{thm:UP}]
Fix $d\geq1$, $0<\alpha,s,\sigma<1$, and set $\beta=\alpha^{-1}$.

\textit{Item~{\hyperref[item:1-thmUP]{$(i)$}}}. Assume $f \in X_{s,\sigma}(\mathbb{R}^d)$ vanishes on $\Lambda$ and $(-\Delta)^s f$ vanishes on $M$, with $\D^-_{\Phi_{1,c_1}}(\Lambda) > 0$ and $\D^-_{\Phi_{\beta,c_2}}(M) > 0$ for some $c_1,c_2>0$. For $f \in X_{s,\sigma}(\mathbb{R}^d)$, note that for every $x,y\in \R^d$
\begin{equation}\label{eq:decay-f}
    |f(x)-f(y)|\lesssim |x-y|^{\theta},
\end{equation}
where $\theta = \min\hch{2s+\sigma, 1}$. This is clear for $0<s<\frac{1}{2}$, and for $\frac{1}{2}\leq s<1$
\begin{equation*}
    |f(x)-f(y)| \leq \int_0^1 |\nabla f (x+t(y-x))\cdot (y-x)|\dt \leq \hn{\nabla f}_\infty |x-y|.
\end{equation*}
Using Proposition~\ref{prop:Hol-decay} with \eqref{eq:decay-f} and the set of zeros $\Lambda$, we have
\begin{equation}\label{eq:f-exp-dec}
    |f(x)|\lesssim e^{-\frac{\theta}{2c_1}|x|}. 
\end{equation}
Furthermore, \eqref{eq:f-exp-dec} implies that $\widehat{f}$ has an analytic continuation in the tube $\Omega = \hch{|\IM(z)| <\tfrac{\theta}{4\pi c_1}}$. Using the regularity estimate \eqref{eq:Silv-regul}, the assumption on $M$ and Proposition~\ref{prop:Hol-decay} yields
\begin{equation}\label{eq:frac-L-subexp}
    |(-\Delta)^s f(y)|\lesssim e^{-b |y|^\alpha}
\end{equation}
for some $b>0$ and all $y\in \R^d$. In particular, $y^\nu (-\Delta)^s f(y) \in L^1(\R^d)$ for every $\nu\in\Z_+^d$, and $\mathcal{F}[(-\Delta)^s f] \in C^\infty(\R^d)$. Let $\varphi(x) = e^{-\pi |x|^2}$ and define $\psi = f * \varphi$. It is known that $\varphi \in \mathcal{S}(\R^d)$, $\widehat{\varphi} = \varphi$, and $(-\Delta)^s\varphi \in \L_s(\R^d)$ by \eqref{eq:rang-FL-S}. The pointwise singular-integral definition of $(-\Delta)^s f$ for $f\in X_{s,\sigma}(\R^d)$, Fubini's theorem, together with \eqref{eq:f-exp-dec} and \eqref{eq:frac-L-subexp}, yields for any $x\in \R^d$
\begin{equation*}
    [(-\Delta)^sf* \varphi] (x) = (-\Delta)^s \psi (x)=[f*(-\Delta)^s \varphi](x).
\end{equation*}
Taking the Fourier transform on both sides, and using \eqref{eq:frac-L-subexp} and \eqref{eq:f-exp-dec} again, we obtain for all $\xi \in \R^d$
\begin{equation*}
    \mathcal{F}[(-\Delta)^sf](\xi) \widehat{\varphi} (\xi)= \widehat{f} (\xi) \mathcal{F}[(-\Delta)^s\varphi](\xi) = |\xi|^{2s}\widehat{f} (\xi) \widehat{\varphi} (\xi).
\end{equation*}
Since $\widehat\varphi$ does not vanish on $\R^d$, it follows that $\mathcal{F}[(-\Delta)^sf](\xi) = |\xi|^{2s}\widehat f(\xi)$ pointwise in $\xi\in\R^d$. Therefore Lemma~\ref{lem:non-smooth-high-dim}, applied with $g=\widehat f$, gives $\partial^\nu \widehat f(0)=0$ for every $\nu\in\Z_+^d$. Let $F$ denote the holomorphic extension of $\widehat{f}$ to $\Omega$. Choose $r>0$ such that the polydisc
\begin{equation*}
    P(0,r) = \hch{z\in\C^d \st |z_j|<r,\ j=1,\dots,d}
\end{equation*}
satisfies $\overline{P(0,r)} \subset \Omega$. Since $F$ is holomorphic in $P(0,r)$, it has a convergent Taylor expansion
\begin{equation*}
    F(z) = \sum_{\nu\in\Z_+^d} \frac{\partial_z^\nu F(0)}{\nu!}z^\nu \quad \text{for } z \in P(0,r).
\end{equation*}
By the Cauchy--Riemann equations, the complex derivatives of $F$ at the origin agree with the corresponding real derivatives of its restriction to $\R^d$. Hence, for every multiindex
\begin{equation*}
    \partial_z^\nu F(0) = \partial^\nu \widehat f(0) = 0,
\end{equation*}
and all Taylor coefficients of $F$ at the origin vanish. Thus $F\equiv 0$ in $P(0,r)$, and because $\Omega$ is connected, the identity theorem for holomorphic functions of several variables implies that $F\equiv 0$ on $\Omega$. Then $\widehat{f}\equiv0$, and by Fourier inversion $f\equiv 0$.

\textit{Item~\hyperref[item:2-thmUP]{$(ii)$}}.
By the proof of Theorem~\ref{thm:UP}~\hyperref[item:1-thmUP]{$(i)$}, the zeros of $f$ force that $\widehat{f}$ is an analytic function in the strip $\Omega = \hch{\hmm{\IM(z)}<\tfrac{1}{4\pi c}}.$ Let $k \in \Z_+$. By Proposition~\ref{prop:JM-decay}, for every $j\in \N$, there exists $R_j>0$ such that
\begin{equation*}
    |(-\Delta)^{s}f(x)| \lesssim |x|^{j\hp{1-\frac{1}{\alpha}}}
\end{equation*}
for $|x| \ge R_j$. Choosing $j>\lceil \alpha(k+1)(1-\alpha)^{-1}\rceil$, we see that $|x|^k(-\Delta)^{s}f(x)\in L^1(\R)$ and it follows that $\mathcal{F}((-\Delta)^sf)\in C^\infty(\R)$. Using this, we have $\mathcal{F}((-\Delta)^sf)(\xi) = |\xi|^{2s}\widehat{f}(\xi)$ pointwise and since $|\xi|^{2s}$ is not smooth at $0$, it follows by Lemma~\ref{lem:non-smooth-one-dim} that $\widehat{f}^{(m)}(0)=0$ for every $m\in \Z_+$. By analytic continuation of $\widehat{f}$, which has a zero of infinite order, we conclude that $\widehat{f}\equiv 0$ in $\Omega$, and the result follows by Fourier inversion.
\end{proof}

Once we have finished the proof of Theorem~\ref{thm:UP}, we can conclude the result on uniqueness pairs for the Hilbert transform.

\begin{proof}[Proof of Corollary~\ref{cor:HT}~{\hyperref[item:1-HT]{$(i)$}}]
Let $\Lambda, M$ discrete, with $\D^-_{\Phi_{1,c}}(\Lambda)$ and $\D^-_{G_{\alpha}}(M)$ positive for some $c>0$ and $0<\alpha<1$. Assume that $f\in \Sp(\R)$ vanishes on $\Lambda$ and ${\bf H}f$ vanishes on $M$. By the real linearity of $\mathbf H$, if $f=u+iv$, then $u$ and $v$ satisfy the same hypotheses as $f$. Hence, the complex-valued case follows from the real-valued one, and we may assume that $f$ is real-valued. In $L^2(\R)$, 
\begin{equation*} 
    2\pi |\xi|\widehat{f}(\xi)= (2\pi i \xi)(-i\sgn(\xi))\widehat{f}(\xi) = (2\pi i \xi)\widehat{{\bf H}f}(\xi). 
\end{equation*} 
Thus $2\pi(-\Delta)^{\frac{1}{2}} f = ({\bf H}f)'$ almost everywhere on $\mathbb{R}$. Since both sides are smooth, the equality holds pointwise. Moreover, it follows from the assumption $\D^-_{G_{\alpha}}(M) > 0$ and the mean value theorem that the zero set of $({\bf H}f)'$ contains a discrete subset $M'$ satisfying $\D^-_{G_{\alpha}}(M') > 0$. By Theorem~\ref{thm:UP}~\hyperref[item:2-thmUP]{$(ii)$} with $s=\frac{1}{2}$, $f$ vanishes identically. 
\end{proof}

\subsection{Frame bounds}\label{sec:3.3}
Let $0<s<1$. Since Theorem~\ref{thm:UP} provides a uniqueness result, it is natural to ask whether this uniqueness can be made stable in a Hilbert space $\Hm\subset X_{s,\sigma}(\R^d)$.

Let $Y=\ell^2(\Lambda,w_\Lambda)\oplus \ell^2(M,w_M)$, where $w_\Lambda,w_M$ are positive weights. We consider the sampling map $T:\Hm\to Y$ by
\begin{equation*}
    Tf = \hp{\hch{f(\lambda)}_{\lambda\in\Lambda}, \hch{(-\Delta)^s f(\mu)}_{\mu\in M}},
\end{equation*}
whenever the right-hand side belongs to $Y$. The question is whether $T$ satisfies the frame inequalities
\begin{equation}\label{eq:frame-bds}
    \|f\|_{\Hm}^2 \lesssim \sum_{\l\in\Lambda}w_\Lambda(\lambda)\hmm{f(\lambda)}^2 + \sum_{\mu\in M}w_M(\mu)\hmm{(-\Delta)^s f(\mu)}^2 \lesssim \|f\|_{\Hm}^2.
\end{equation}
In such a case, $T$ would be bounded below and have a closed range. Since $Y$ is a Hilbert space, $T(\Hm)$ would be complemented in $Y$, and therefore $T$ would admit a bounded left inverse $L: Y\to\Hm$ with $LT=I_{\Hm}$. In particular, the frame bounds imply that $f\in \Hm$ can be reconstructed, in a stable way, from the values $\hch{f(\lambda)}_{\l\in \Lambda}$ and $\hch{(-\Delta)^s f(\mu)}_{\mu \in M}$ by the interpolation formula
\begin{equation*}
    f = \sum_{\lambda\in\Lambda} f(\lambda)a_\lambda +\sum_{\mu\in M} (-\Delta)^s f(\mu)b_\mu
\end{equation*}
with convergence in $\Hm$, see \cite[Theorem~1.8, Corollary~2.2]{KulikovNazarovSodin2025}.

We now show that such two-sided frame bounds cannot hold in the present setting under natural assumptions on $\Hm$. Assume that $\Hm$ contains $C_c^\infty(\mathbb R^d)$ and that $\|h\|_{L^2(\mathbb R^d)} \lesssim \|h\|_{\Hm}$ for $h\in C_c^\infty(\R^d)$. Suppose, moreover, that the upper estimate in \eqref{eq:frame-bds} holds. Since $\Lambda\cup M$ is discrete, we may choose $x_0\in\R^d$ and $r_0>0$ such that $B(x_0,2r_0)\cap(\Lambda\cup M)=\emptyset$. Choose a $0\neq \varphi\in C_c^\infty(B(0,1))$, with $\varphi\geq 0$, and for $0<r\leq r_0$ set
\begin{equation*}
    g_r(x)=\varphi\hp{\frac{x-x_0}{r}}, \qquad f_r=\frac{g_r}{\|g_r\|_{\Hm}}.
\end{equation*}
Then $f_r|_\Lambda=0$. Also, for $\mu\in M$, the point $\mu$ is separated from $\supp g_r$, and the formula \eqref{eq:def-FL-pv} gives
\begin{equation*}
    (-\Delta)^s g_r(\mu) = -c_{d,s}\int_{\R^d} \frac{g_r(y)}{|\mu-y|^{d+2s}}\dy.
\end{equation*}
Using $|\mu-y|\asymp 1+|\mu|$ for $y\in B(x_0,r)$ and $\mu \in M$, we obtain 
\begin{equation}\label{eq:bound-frac-gr}
    |(-\Delta)^s g_r(\mu)| \asymp r^{d}(1+|\mu|)^{-d-2s}.
\end{equation}
Using this and applying the upper frame bound in \eqref{eq:frame-bds} for $g_{r_0}\in \Hm$, and using that $g_{r_0}|_\Lambda=0$, we get $\sum_{\mu\in M} w_M(\mu)(1+|\mu|)^{-2d-4s}<\infty$. On the other hand, $\|g_r\|_{\Hm}\gtrsim \|g_r\|_{L^2}\asymp r^{d/2}$ and \eqref{eq:bound-frac-gr} yields
\begin{equation*}
    \sum_{\mu\in M} w_M(\mu)|(-\Delta)^s f_r(\mu)|^2 \lesssim \sum_{\mu\in M} w_M(\mu) \hp{r^{d/2}(1+|\mu|)^{-d-2s}}^2\lesssim r^d,
\end{equation*}
by the definition of $f_r$. Therefore $\|Tf_r\|_Y\to0$ as $r\to0^+$, while $\|f_r\|_{\Hm}=1$. Consequently, whenever the upper estimate holds in such a Hilbert space $\Hm$, the lower bound in \eqref{eq:frame-bds} cannot hold.

This shows where the analogy with the Fourier transform argument fails. If $g_r$ is the bump function used above, localized at scale $r$, then $\widehat{g_r}$ spreads over frequencies of scale $r^{-1}$, and the uncertainty principle prevents the localization argument. In the present problem, however, both sets of zeros $\Lambda$ and $M$ are imposed on the physical side, so such localization is available.

\begin{remark}
One may ask whether the obstruction disappears when the Gagliardo seminorm replaces the Hilbert norm. Let $0<\eta<1$ and consider the Gagliardo seminorm
\begin{equation*}
    \hbra{h}_{\dot H^\eta}^2 \defi \int_{\R^d}\int_{\R^d} \frac{\hmm{h(x)-h(y)}^2}{\hmm{x-y}^{d+2\eta}}\dx\dy.
\end{equation*}
Assume first that the analogue of the upper estimate holds. Using the same localized functions $g_r$, one has \eqref{eq:bound-frac-gr} and $\hbra{g_r}_{\dot H^\eta}^2 = r^{d-2\eta} \hbra{\varphi}_{\dot H^\eta}^2$. Hence, if $f_r=\frac{g_r}{\hbra{g_r}_{\dot H^\eta}}$, then $f_r|_\Lambda=0$ and $\hbra{f_r}_{\dot H^\eta}=1$. Therefore, arguing as above, the analogue of the lower estimate in \eqref{eq:frame-bds} would imply
\begin{equation*}
    1 \lesssim \sum_{\mu\in M}w_M(\mu)\hmm{(-\Delta)^s f_r(\mu)}^2 \lesssim \frac{r^{2d}}{\hbra{g_r}_{\dot H^\eta}^2} \sum_{\mu\in M}w_M(\mu)(1+\hmm{\mu})^{-2d-4s} \lesssim r^{d+2\eta},
\end{equation*}
which is impossible as $r\to0^+$.
\end{remark}

\section{Non-uniqueness pairs}\label{sec:4}

The purpose of this section is to prove Theorem~\ref{thm:nup-strong}. We shall, in fact, prove a more flexible interpolation result from which the previous theorem will follow, see Theorem~\ref{thm:interpol-NUP}. We first construct Gevrey-class functions with prescribed values for the function and its fractional Laplacian at the origin, together with subexponential decay estimates. After translations and dilations, these functions are used to define a synthesis operator on the weighted sequence space associated with the sets $\Lambda$ and $M$. Composing this synthesis operator with the trace map produces an operator of the form $I+E$. The key point is to choose the scales so that the off-diagonal operator $E$ has norm strictly smaller than one. Once this is done, the interpolation operator is obtained using the inverse of $I+E$. This inversion of $I+E$ follows the strategy employed in the non-uniqueness results of \cite{Adve2023} and \cite{KulikovNazarovSodin2025}.

\subsection{Gevrey class}\label{sec:4.1}
For the discussion in this subsection, see \cite[Sections~1.4--1.6]{Rodino1993}. Let $r>1$ and $U\subset\R^d$ be open. We denote by $G^r(U)$ the Gevrey class of order $r$, that is, the space of all $h\in C^\infty(U)$ such that, for every compact set $K\Subset U$, there exist constants $C_K,A_K>0$ satisfying
\begin{equation*}
    |\partial^\nu h(\xi)| \leq C_K (A_K)^{|\nu|}(\nu!)^r
\end{equation*}
for $\xi\in K$ and $\nu\in\Z_+^d$. We also write $G_c^r(U)=G^r(U)\cap C_c^\infty(U)$. Since $r>1$, $G_c^r(U)$ contains nonnegative functions supported in arbitrary balls compactly contained in $U$.

\begin{remark}\label{rem:PW-Gevrey}
Let $r>1$ and let $h\in G_c^r(\R^d)$. Set $\delta=1/r$. Then, for every multiindex $\nu \in \Z_+^d$, there are constants $C_\nu,c_\nu>0$ such that 
\begin{equation*}
    |\partial^\nu\mathcal{F}^{-1} h(x)| \leq C_\nu e^{-c_\nu |x|^\delta}
\end{equation*}
for $x\in\R^d$. If $m:\R^d\to \C$ is real analytic in a neighbourhood of $\supp h$, then the same inequality holds with $h$ replaced by $mh$.
\end{remark}
We now construct the functions that will be used in the proof of Theorem~\ref{thm:interpol-NUP}.
\begin{proposition}\label{prop:phi-psi}
Let $s>0$, $U=\hch{\xi\in\R^d \st 1<|\xi|<2}$ and $0<\delta<1$. Then, there exist functions $\varphi,\psi\in \Sp(\R^d)$ such that $\supp \widehat\varphi\cup\supp\widehat\psi \Subset U$,
\begin{equation}\label{eq:phi-psi-point}
    \varphi(0)=1, \quad (-\Delta)^s\varphi(0)=0, \quad \psi(0)=0, \quad (-\Delta)^s\psi(0)=1,
\end{equation}
and, for every multiindex $\nu \in \Z_+^d$, there exist constants $C_\nu,c_\nu>0$ such that
\begin{equation*}
    |\partial^\nu\varphi(x)|+|\partial^\nu\psi(x)|+ |\partial^\nu (-\Delta)^s\varphi(x)|+|\partial^\nu (-\Delta)^s\psi(x)| \leq C_\nu e^{-c_\nu |x|^\delta}.
\end{equation*}
\end{proposition}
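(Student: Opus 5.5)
Our plan is to build $\varphi$ and $\psi$ on the Fourier side as combinations of Gevrey bump functions supported in the annulus $U$. We then read off the pointwise values at the origin as integrals of $\widehat\varphi$ and of $|\xi|^{2s}\widehat\varphi$. Set $r=1/\delta>1$. By the discussion in Subsection~\ref{sec:4.1}, $G_c^r(U)$ contains nonnegative nonzero functions supported in any ball compactly contained in $U$. We fix two such functions $h_1,h_2\in G_c^r(U)$, with supports in two balls $B_1=B(a_1,\epsilon)$ and $B_2=B(a_2,\epsilon)$, where $|a_1|=1.2$, $|a_2|=1.8$ and $\epsilon$ is small. By Fourier inversion, for $g=\mathcal F^{-1}h$ with $h\in C_c^\infty(U)$ we have
\[
g(0)=\int h(\xi)\dxi, \qquad (-\Delta)^s g(0)=\int |\xi|^{2s}h(\xi)\dxi .
\]
Here $(-\Delta)^s g=\mathcal F^{-1}(|\xi|^{2s}h)$ is well defined and Schwartz, because $|\xi|^{2s}$ is smooth on $U$.

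Next we look for $\widehat\varphi=a h_1+b h_2$ and $\widehat\psi=c h_1+d h_2$. The conditions \eqref{eq:phi-psi-point} become a $2\times2$ linear system with matrix
\[
A=\begin{pmatrix} I_1 & I_2\\ J_1 & J_2\end{pmatrix}, \qquad I_k=\int h_k\dxi,\quad J_k=\int |\xi|^{2s}h_k\dxi .
\]
Since $h_k\ge0$ is supported where $|\xi|^{2s}\in\bigl((|a_k|-\epsilon)^{2s},(|a_k|+\epsilon)^{2s}\bigr)$, we get $J_k/I_k\in\bigl((|a_k|-\epsilon)^{2s},(|a_k|+\epsilon)^{2s}\bigr)$. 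For $\epsilon$ small these intervals are disjoint, so $J_1/I_1\neq J_2/I_2$ and $\det A\neq0$. Solving $A(a,b)^T=(1,0)^T$ and $A(c,d)^T=(0,1)^T$ gives $\varphi$ and $\psi$. Their Fourier supports are compact in $U$, and since $\widehat\varphi,\widehat\psi\in C_c^\infty$, both $\varphi$ and $\psi$ lie in $\Sp(\R^d)$.

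For the decay estimates we use Remark~\ref{rem:PW-Gevrey}. Both $\widehat\varphi$ and $\widehat\psi$ lie in $G_c^r(\R^d)$, which gives $|\partial^\nu\varphi|+|\partial^\nu\psi|\le C_\nu e^{-c_\nu|x|^\delta}$. For the fractional Laplacians we apply the last part of that remark with $m(\xi)=|\xi|^{2s}$. This $m$ is real analytic on a neighbourhood of $\overline U$, which avoids the origin, and so $|\partial^\nu(-\Delta)^s\varphi|$ and $|\partial^\nu(-\Delta)^s\psi|$ obey the same type of bound. Taking the minimum of the constants $c_\nu$ and the maximum of the $C_\nu$ over the four functions gives the stated inequality.

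The only step needing care is the invertibility of $A$, and the separation of the two supports in the radial variable handles it. The heavier analytic input, the Paley--Wiener-type decay for Gevrey functions, is already supplied by Remark~\ref{rem:PW-Gevrey}.
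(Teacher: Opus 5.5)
Your proposal is correct and follows the paper's proof essentially step by step. Both use nonnegative Gevrey bumps in $G_c^{1/\delta}(U)$, a $2\times 2$ linear system for $g(0)=\int h$ and $(-\Delta)^s g(0)=\int |\xi|^{2s}h$, and decay from Remark~\ref{rem:PW-Gevrey} applied to $h_j$ and $|\xi|^{2s}h_j$. The only difference is minor: you check invertibility concretely, by placing the two bumps at different radii so that the ratios $J_k/I_k$ are separated, whereas the paper argues abstractly that $h\mapsto\bigl(\int h,\int |\xi|^{2s}h\bigr)$ has rank two.
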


\begin{proof}
Let $m_s(\xi)=|\xi|^{2s}$ and set $r=1/\delta$. Since $m_s$ is real analytic on $U$, multiplication by $m_s$ preserves $G_c^r(U)$. Consider the linear map $T:G_c^r(U)\to\C^2$ defined as
\begin{equation*}
    T(h)=\hp{\int_{\R^d} h(\xi)\dxi,\int_{\R^d} m_s(\xi)h(\xi)\dxi}.
\end{equation*}
We will prove that $T$ has rank two. Indeed, otherwise there exist $(z_1,z_2)\in\C^2\setminus\hch{(0,0)}$ such that
\begin{equation*}
    \int_{\R^d}\hp{z_1+z_2m_s(\xi)}h(\xi)\dxi=0
\end{equation*}
for every $h\in G_c^r(U)$. Since $G_c^r(U)$ contains nonnegative functions supported in arbitrarily small balls compactly contained in $U$, this implies that $z_1+z_2m_s(\xi)=0$ for $\xi\in U$, which is impossible unless $(z_1,z_2)=(0,0)$. Choose $h_1,h_2\in G_c^r(U)$ such that $T(h_1),T(h_2)$ form a basis of $\C^2$, and set $u_j=\mathcal F^{-1}h_j$. Then
\begin{equation*}
    M=\begin{pmatrix}
        u_1(0) & u_2(0) \\
        (-\Delta)^su_1(0) & (-\Delta)^su_2(0)
    \end{pmatrix}
\end{equation*}
is invertible. Let $v^{(j)}\in\C^2$ be defined by $Mv^{(j)}=e_j$, $j=1,2$. For $v^{(j)}=(v^{(j)}_1,v^{(j)}_2)$, define
\begin{equation*}
    \varphi ={v^{(1)}_1u_1+v^{(1)}_2u_2}, \qquad \psi ={v^{(2)}_1u_1+v^{(2)}_2u_2}.
\end{equation*}
Then \eqref{eq:phi-psi-point} holds by definition, while $\supp \widehat\varphi\cup\supp\widehat\psi \Subset U$ follows from $\widehat\varphi,\widehat\psi\in\operatorname{span}\hch{h_1,h_2}$. Finally, the subexponential decay follows from Remark~\ref{rem:PW-Gevrey}, applied to both $h_j$ and $m_sh_j$.
\end{proof}

\subsection{\texorpdfstring{$\bf{\rho}$}{ρ}-separated sets}\label{sec:4.2}

Recall that a discrete set $\Gamma\subset\R^d$ is called $\rho$-separated if there exist $A,B>0$ such that for $\gamma\in\Gamma$
\begin{equation}\label{eq:const-def-sep}
    \dist(\gamma,\Gamma\setminus\{\gamma\}) \ge A e^{-B|\gamma|^\rho}.
\end{equation}

For $\alpha>1$, the $\frac1\alpha$-separation condition is a local separation assumption at the exponential scale, whereas $\D_{\Phi_{\alpha,c}}^+(\Gamma)<\infty$ is only a global density condition. Nevertheless, both assumptions imply the following counting estimate. For simplicity, we prove it only for $\rho$-separated sets.

\begin{proposition}\label{prop:dens-cntg}
Let $0<\rho<1$ and let $\Gamma\subset\R^d$ be $\rho$-separated. Then there exists a constant $C>0$ such that for $R\geq 1$
\begin{equation*}
    \#\hch{\gamma\in\Gamma \st |\gamma|\leq R} \lesssim e^{CR^\rho}.
\end{equation*}
\end{proposition}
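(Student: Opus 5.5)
The plan is a volume-packing argument: around each point of $\Gamma$ in the ball $B(0,R)$ we place a small ball whose radius comes from the separation condition, and we count how many such disjoint balls fit inside a slightly larger ball. Fix $R\ge 1$ and set $\Gamma_R=\{\gamma\in\Gamma \st |\gamma|\le R\}$. By \eqref{eq:const-def-sep}, every $\gamma\in\Gamma_R$ satisfies
\begin{equation*}
    \dist(\gamma,\Gamma\setminus\{\gamma\})\ge Ae^{-B|\gamma|^\rho}\ge Ae^{-BR^\rho}=:\varepsilon_R,
\end{equation*}
because $t\mapsto e^{-Bt^\rho}$ is decreasing. So any two distinct points of $\Gamma_R$ are at distance at least $\varepsilon_R$, and the balls $B(\gamma,\varepsilon_R/2)$, $\gamma\in\Gamma_R$, are pairwise disjoint.

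Next we bound the space these balls occupy. Each ball $B(\gamma,\varepsilon_R/2)$ lies in $B(0,R+\varepsilon_R/2)\subset B(0,R+A)$. Comparing volumes, with $\omega_d=|B(0,1)|$, gives
\begin{equation*}
    \#\Gamma_R\,\omega_d(\varepsilon_R/2)^d\le \omega_d(R+A)^d,
\end{equation*}
and therefore
\begin{equation*}
    \#\Gamma_R\le \Bigl(\tfrac{2(R+A)}{A}\Bigr)^d e^{dBR^\rho}.
\end{equation*}

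It remains to absorb the polynomial factor into the exponential. Since $R\ge1$, we have $(R+A)^d\lesssim_{A,d} R^d$. Since $\rho>0$, we also have $R^d=e^{d\log R}\lesssim_{d,\rho} e^{R^\rho}$. Together these give $\#\Gamma_R\lesssim e^{CR^\rho}$ with $C=dB+1$.

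There is no real obstacle here. The only points needing care are that the separation radius must be taken uniformly over $|\gamma|\le R$, which is where the monotonicity in $|\gamma|$ enters, and that the polynomial volume factor $R^d$ is dominated by $e^{R^\rho}$, which holds because $\rho>0$.
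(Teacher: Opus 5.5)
Your proposal is correct and follows the paper's proof. The paper uses the same volume-packing argument: disjoint balls of radius $\frac{A}{4}e^{-BR^\rho}$ around the points of $\Gamma\cap B(0,R)$, a volume comparison, and absorption of the factor $R^d$ into $e^{CR^\rho}$. The only difference is your choice of radius $\varepsilon_R/2$ instead of $\varepsilon_R/4$, which does not matter.
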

\begin{proof}
Let $A,B>0$ be the constants of \eqref{eq:const-def-sep} and set $r_R=\frac{A}{4}e^{-B R^\rho}$. For $\gamma\in\Gamma\cap B(0,R)$ the balls $B(\gamma,r_R)$ are pairwise disjoint and contained in $B(0,R+r_R)$. Hence, writing $N(R)=\#(\Gamma\cap B(0,R))$ and comparing volumes $N(R)|B(0,r_R)|\leq |B(0,R+r_R)|$. Using $r_R\lesssim 1$, we have $N(R)\lesssim R^d e^{Bd R^\rho}$, and the polynomial factor is absorbed into $e^{CR^\rho}$.
\end{proof}

We shall also use the following elementary enlargement property of $\rho$-separated sets.
\begin{proposition}\label{prop:Y-ela-two-sets}
Let $0<\rho<1$, and let $\Lambda,M\subset\R^d$ be $\rho$-separated. Then there is an infinite set $Y$, with $Y\subset\R^d\setminus(\Lambda\cup M)$, such that $\Lambda\cup Y$ is $\rho$-separated.
\end{proposition}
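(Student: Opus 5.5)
We will construct $Y$ explicitly as a sequence of points $y_n$ escaping to infinity, placed at points that stay far from $\Lambda\cup M$ relative to the separation scale. The one worry is that $\Lambda$ could be dense at exponentially small scales. This is handled by Proposition~\ref{prop:dens-cntg}: in a large ball the set $\Lambda\cup M$ has at most $e^{CR^\rho}$ points, so a volume argument produces a point that is not too close to any of them.

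\emph{Step 1: choosing the points.} Fix $R_n=n$ for $n\ge 1$, or a faster-growing sequence if convenient. Consider the annulus $A_n=\{n\le |x|\le n+1\}$. By Proposition~\ref{prop:dens-cntg} applied to $\Lambda$ and to $M$, the set $(\Lambda\cup M)\cap B(0,n+2)$ has at most $N_n\lesssim e^{C(n+2)^\rho}$ points.

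Take $r_n=e^{-C'n^\rho}$ with $C'>C/d$ large. The union of the balls $B(z,2r_n)$, over $z\in(\Lambda\cup M)\cap B(0,n+2)$, then has measure at most $N_n\,\omega_d(2r_n)^d\to 0$. This is smaller than $|A_n|\gtrsim 1$ for all large $n$.

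Hence there is $y_n\in A_n$ with $\dist(y_n,\Lambda\cup M)\ge 2r_n$. Points of $\Lambda\cup M$ outside $B(0,n+2)$ are at distance at least $1$ anyway. For $n<n_0$ we simply skip. The $y_n$ are distinct, and they lie in disjoint annuli if we use every other $n$ (say $n$ even). So $Y=\{y_n\}$ is infinite and disjoint from $\Lambda\cup M$.

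\emph{Step 2: verifying $\rho$-separation of $\Lambda\cup Y$.} We check three kinds of pairs.

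For $y_n\in Y$, the distance to other points of $Y$ is at least $1$ by the annulus spacing. Its distance to $\Lambda$ is at least $2r_n=2e^{-C'n^\rho}\ge 2e^{-C'|y_n|^\rho}$, since $|y_n|\ge n$.

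For $\lambda\in\Lambda$, the distance to $\Lambda\setminus\{\lambda\}$ is at least $Ae^{-B|\lambda|^\rho}$ by hypothesis. For its distance to some $y_n$, note that $|\lambda-y_n|\ge 2r_n$. If $|\lambda-y_n|<1$, then $|\lambda|\ge n-1$, so $2r_n=2e^{-C'n^\rho}\gtrsim e^{-C'(|\lambda|+1)^\rho}\gtrsim e^{-C'|\lambda|^\rho}$, using $(t+1)^\rho\le t^\rho+1$.

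Taking $A'=\min(A,c)$ and $B'=\max(B,C')$ gives \eqref{eq:const-def-sep} for $\Lambda\cup Y$.

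\emph{Main obstacle.} The only delicate point is Step 1, namely beating the possibly exponential local clustering of $\Lambda\cup M$. This works precisely because the counting bound $e^{CR^\rho}$ and the separation scale $e^{-C'R^\rho}$ have the same form, so choosing $C'$ large relative to $C$ makes the excluded volume negligible. The rest is bookkeeping of constants.
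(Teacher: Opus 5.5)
Your proof is correct and is essentially the paper's argument. The counting bound of Proposition~\ref{prop:dens-cntg} makes the union of balls of radius $\asymp e^{-C'R^\rho}$ around $(\Lambda\cup M)\cap B(0,R)$ negligible in volume, so each large annulus contains a point $y$ whose distance to $\Lambda\cup M$ is at least of that size, and a short case check on the pairs $(\lambda,y)$ then finishes. The differences are cosmetic: you use unit-width annuli at every other integer, which gives mutual spacing $1$ and a two-case check via $|\lambda-y_n|<1\Rightarrow|\lambda|>n-1$ (with $A'=\min\{A,1,2e^{-C'}\}$), while the paper uses lacunary annuli $R_j<|x|<2R_j$ with $R_{j+1}>4R_j$ and a three-case comparison of $|\lambda|$ with $R_j$.
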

\begin{proof}
Let $A, B>0$ be separation constants for $\Lambda$. Pick a sequence $R_j\to\infty$ such that $R_{j+1}>4R_j$ and $R_1\geq 1$ is large. By Proposition~\ref{prop:dens-cntg}, there is $C_0>0$ such that
\begin{equation*}
    \#\hch{\lambda\in\Lambda\st |\lambda|\le 3R_j} + \#\hch{\mu\in M\st |\mu|\le 3R_j} \lesssim e^{C_0R_j^\rho}.
\end{equation*}
Choose $C>C_0+2$. Then
\begin{equation*}
    \hmm{\bigcup_{\gamma\in(\Lambda\cup M)\cap B(0,3R_j)} B(\gamma,e^{-C R_j^\rho})} \lesssim e^{C_0R_j^\rho} e^{-dCR_j^\rho}.
\end{equation*}
This tends to $0$ as $j\to\infty$. On the other hand, consider $O_j=\hch{x\in\R^d \st R_j<|x|<2R_j}$ for $j\ge 1$. We have $|O_j|\asymp R_j^d$, and hence, for $j$ sufficiently large, the above union of balls cannot cover $O_j$. Therefore, for such $j$ one may choose
\begin{equation*}
    y_j\in O_j\setminus \bigcup_{\gamma\in(\Lambda\cup M)\cap B(0,3R_j)} B(\gamma,e^{-C R_j^\rho}).
\end{equation*}
If $\gamma\notin B(0,3R_j)$ and $y_j\in O_j$, then $|y_j-\gamma|\ge R_j$, which is larger than $e^{-CR_j^\rho}$ for large $j$. Then one can choose $y_j\in O_j$ such that
\begin{equation}\label{eq:Y-avoid-two}
    \dist(y_j,\Lambda\cup M)\ge e^{-C R_j^\rho}
\end{equation}
for some constant $C>0$ independent of $j$. Discarding finitely many initial annuli and reindexing, set $Y=\hch{y_j\st j\ge1}$. We now prove that $\widetilde\Lambda=\Lambda\cup Y$ is $\rho$-separated. First, consider $y_j\in Y$. Since $R_j<|y_j|<2R_j$, \eqref{eq:Y-avoid-two} implies $\dist(y_j,\Lambda)\ge e^{-C'|y_j|^\rho}.$ Moreover, the condition $R_{j+1}>4R_j$ implies that the annuli are separated, hence there exist $A_1,B_1>0$ such that
\begin{equation}\label{eq:adm1-two}
    \dist(y_j,\widetilde\Lambda\setminus\hch{y_j}) \ge A_1 e^{-B_1|y_j|^\rho}.
\end{equation}
It remains to check the $\rho$-separability for other points, namely for $\lambda\in\Lambda$. Fix $\lambda\in\Lambda$ and $j\ge1$. If $R_j\le |\lambda|/3$, then
\begin{equation*}
    |\lambda-y_j| \ge |\lambda|-|y_j| >|\lambda|-2R_j \ge \frac{|\lambda|}{3}\geq R_1.
\end{equation*}
If $R_j>2|\lambda|$, then
\begin{equation*}
    |\lambda-y_j| \ge |y_j|-|\lambda| >R_j-|\lambda| >\frac{R_j}{2} \ge \frac{R_1}{2}.
\end{equation*}
Finally, if $\frac{|\lambda|}{3}<R_j\le 2|\lambda|$, then $|\lambda|<3R_j$, so $\lambda\in\Lambda\cap B(0,3R_j)$, and by the choice of $y_j$,
\begin{equation*}
    |\lambda-y_j|\ge e^{-C R_j^\rho} \ge e^{-C2^\rho|\lambda|^\rho}.
\end{equation*}
Thus, possibly after changing the constant $A_1$ and $B_1$, $|\lambda-y_j|\ge A_1 e^{-B_1|\lambda|^\rho}$ for every $j\ge1$. Taking the infimum over $j$ gives $\dist(\lambda,Y)\ge A_1 e^{-B_1|\lambda|^\rho}.$ Combining this with the original $\rho$-separation of $\Lambda$, we obtain constants $A_2,B_2>0$ such that
\begin{equation}\label{eq:adm2-two}
    \dist(\lambda,\widetilde\Lambda\setminus\hch{\lambda}) \ge A_2 e^{-B_2|\lambda|^\rho}.  
\end{equation}
The bounds \eqref{eq:adm1-two} and \eqref{eq:adm2-two} imply the result.
\end{proof}

Finally, we prove a summability property, which follows from the choice of two distinct scales.
\begin{proposition}\label{prop:cntg-sum}
Let $0<\rho<1$ and let $\Gamma\subset\R^d$ be $\rho$-separated. If $\eta>\rho$ and $\tau>0$, then
\begin{equation*}
    \sum_{\gamma \in\Gamma} e^{-\tau |\gamma|^\eta}<\infty.
\end{equation*}
\end{proposition}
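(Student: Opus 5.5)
The plan is to decompose $\Gamma$ into dyadic shells and control each shell with the counting estimate of Proposition~\ref{prop:dens-cntg}. Set $\Gamma_0=\{\gamma\in\Gamma \st |\gamma|<1\}$ and, for $k\ge 0$, set $\Gamma_k'=\{\gamma\in\Gamma \st 2^k\le|\gamma|<2^{k+1}\}$. The set $\Gamma_0$ is finite, since $\Gamma$ is discrete and hence has only finitely many points in any bounded set. Its contribution to the sum is therefore finite.

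For the shell $\Gamma_k'$ we use two facts. By Proposition~\ref{prop:dens-cntg} with $R=2^{k+1}\ge 1$,
\begin{equation*}
    \#\Gamma_k' \le \#\hch{\gamma\in\Gamma \st |\gamma|\le 2^{k+1}} \lesssim e^{C2^{(k+1)\rho}}.
\end{equation*}
Each term in this shell satisfies $e^{-\tau|\gamma|^\eta}\le e^{-\tau 2^{k\eta}}$. Multiplying these bounds gives
\begin{equation*}
    \sum_{\gamma\in\Gamma_k'} e^{-\tau|\gamma|^\eta} \lesssim \exp\hp{C2^\rho 2^{k\rho}-\tau 2^{k\eta}}.
\end{equation*}

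The only point needing care is that the exponent eventually becomes very negative. Because $\eta>\rho$, we have $\tau 2^{k\eta}\ge 2C2^\rho 2^{k\rho}$ for all $k$ beyond some $k_0$. For such $k$ the shell bound is at most $\exp(-\tfrac{\tau}{2}2^{k\eta})$. This is summable in $k$; for instance, it is dominated by $e^{-\frac{\tau}{2}k\eta\log 2}$ for large $k$, which is a geometric series.

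The finitely many shells with $k<k_0$ contribute a finite amount. Adding the finite contribution of $\Gamma_0$ then gives the stated convergence. No real obstacle is expected: the whole argument rests on comparing the exponents $\rho$ and $\eta$.

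An alternative route is summation by parts against $N(R)=\#(\Gamma\cap B(0,R))$, which gives the integral $\int_1^\infty e^{-\tau R^\eta}\,\mathrm{d}N(R)$. This integral is finite for the same reason, but the dyadic argument is simpler.
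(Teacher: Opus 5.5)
Your proposal is correct and follows the paper's own argument: the paper also splits off the finite set $\Gamma\cap B(0,1)$, bounds each dyadic shell by $\exp(C2^{j\rho}-\tau 2^{j\eta})$ using Proposition~\ref{prop:dens-cntg}, and uses $\eta>\rho$ to make the exponent at most $-\frac{\tau}{2}2^{j\eta}$ for large $j$. Your bookkeeping with $R=2^{k+1}$, which produces the extra factor $2^{\rho}$, is slightly more careful than the paper's shell count, and your geometric-series domination correctly justifies the final summation.
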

\begin{proof}
First, the set $\Gamma\cap B(0,1)$ is finite. For $j\geq0$ define $O_j=\hch{\gamma \in\Gamma \st 2^j\leq |\gamma|<2^{j+1}}$. By Proposition~\ref{prop:dens-cntg} we have $\#O_j\lesssim e^{C2^{j\rho}}$. Hence $\sum_{\gamma\in O_j} e^{-\tau |\gamma|^\eta} \lesssim\exp\hp{C2^{j\rho}-\tau 2^{j\eta}}$. Since $\eta>\rho$, the exponent is at most $-\frac{\tau}{2}2^{j\eta}$ for all large $j$. Summing over $j$ gives the proposition.
\end{proof}

\subsection{An interpolation result}\label{sec:4.3}

In what follows, we assume that $0<\rho<\eta<1$. We first introduce a weighted sequence space.
\begin{definition}
Let $\Lambda, M\subset\R^d$ be discrete. For $\tau,L>0$, define
\begin{equation*}
    X_{\tau,L}(\Lambda, M) = \hch{(\alpha,\beta)\st  \hn{(\alpha,\beta)}_{X_{\tau,L}} \defi \sup_{\l\in\Lambda}e^{\tau|\l|^\eta+L|\l|^\rho}|\alpha(\l)| + \sup_{\mu\in M}e^{\tau|\mu|^\eta}|\beta(\mu)| <\infty},
\end{equation*}
where $\alpha:\Lambda\to\C$ and $\beta:M\to\C$.
\end{definition}
Equipped with this norm, $X_{\tau, L}(\Lambda, M)$ is a Banach space, since it is the product of two weighted $\ell^\infty$ spaces. We now state the interpolation result that will be used to prove Theorem~\ref{thm:nup-strong}.
\begin{theorem}\label{thm:interpol-NUP}
Let $0<\rho<\eta<1$, $s>0$ and $\Lambda,M \subset\R^d$ be $\rho$-separated. For every $\tau>0$, there exist $L>0$ and a continuous linear map $Q_{\tau,L}:X_{\tau,L}(\Lambda,M)\to\Sp(\R^d)$ such that, if $f=Q_{\tau,L}(\alpha,\beta)$, then $(-\Delta)^sf$ belongs to $\Sp(\R^d)$ and
\begin{equation}\label{eq:two-set-interpolation}
    f(\l)=\alpha(\l)\text{ for }\l\in\Lambda, \qquad (-\Delta)^sf(\mu)=\beta(\mu) \text{ for } \mu\in M.
\end{equation}
Moreover, for every multiindex $\nu \in \Z_+^d$, there exist constants $C_\nu,c_\nu>0$ such that
\begin{equation}\label{eq:interp-decay}
    |\partial^\nu f(x)|+|\partial^\nu(-\Delta)^sf(x)| \leq C_\nu\hn{(\alpha,\beta)}_{X_{\tau,L}}e^{-c_\nu|x|^\eta}.
\end{equation}
\end{theorem}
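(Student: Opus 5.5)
We follow the strategy announced at the start of Section~\ref{sec:4}. Fix $\tau>0$ and use Proposition~\ref{prop:phi-psi} with some $\delta\in(\eta,1)$ to obtain $\varphi,\psi$. These satisfy \eqref{eq:phi-psi-point}, and all their derivatives, as well as those of $(-\Delta)^s\varphi$ and $(-\Delta)^s\psi$, decay like $e^{-c|x|^\delta}$.

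For each $\lambda\in\Lambda$ we choose a small scale $r_\lambda=\epsilon e^{-K|\lambda|^\rho}$, and for each $\mu\in M$ a scale $r_\mu=\epsilon e^{-K|\mu|^\rho}$. Here $K$ exceeds the separation constant $B$, and $\epsilon$ is small. We then set
\begin{equation*}
    \varphi_\lambda(x)=\varphi\hp{\tfrac{x-\lambda}{r_\lambda}}, \qquad \psi_\mu(x)=r_\mu^{2s}\psi\hp{\tfrac{x-\mu}{r_\mu}}.
\end{equation*}
By Remark~\ref{rem:tranla-dila}, this gives $\varphi_\lambda(\lambda)=1$ and $(-\Delta)^s\varphi_\lambda(\lambda)=0$, together with $\psi_\mu(\mu)=0$ and $(-\Delta)^s\psi_\mu(\mu)=1$. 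Moreover, $|(-\Delta)^s\varphi_\lambda(x)|\le r_\lambda^{-2s}Ce^{-c(|x-\lambda|/r_\lambda)^\delta}$. The synthesis operator is
\begin{equation*}
    S(\alpha,\beta)=\sum_\lambda \alpha(\lambda)\varphi_\lambda+\sum_\mu\beta(\mu)\psi_\mu .
\end{equation*}
Composing it with the trace map $f\mapsto(f|_\Lambda,(-\Delta)^sf|_M)$ gives $I+E$. The off-diagonal part $E$ collects the values of $\varphi_{\lambda'}$ at $\lambda\neq\lambda'$ and at the points of $M$, and similarly for $\psi_\mu$.

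The main step is to show $\|E\|_{X_{\tau,L}\to X_{\tau,L}}<1$. We would first try to handle $M$ by taking $\Lambda$ and $M$ jointly $\rho$-separated. If $\Lambda\cap M\neq\emptyset$, or if they are close, this fails. A common point is harmless, however, because $\varphi_\lambda$ and $\psi_\mu$ are diagonal there; what matters is that the translates centered at distinct points are far apart in their own scales. We therefore take $K$ large enough relative to the separation constants. For $\gamma\ne\gamma'$ in $\Lambda$ (and likewise within $M$), we have $|\gamma-\gamma'|/r_{\gamma'}\gtrsim \epsilon^{-1}e^{(K-B)\max(|\gamma|,|\gamma'|)^\rho}$ up to constants, using that the $\rho$-separation is measured from whichever point is relevant. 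For pairs with one point in $\Lambda$ and one in $M$ that nearly coincide, we may instead bound the cross terms using the vanishing at the center and the smoothness. Alternatively, we perturb by treating a pair $(\lambda,\mu)$ with $\lambda=\mu$ through the $2\times2$ identity structure of \eqref{eq:phi-psi-point}. I expect this cross-interaction between $\Lambda$ and $M$ to be the delicate point. The fallback is to require only the within-set separation and to use the decay of $\varphi$ itself away from nearby points at distance $\gtrsim e^{-B|\cdot|^\rho}$.

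Given the separation, one entry of $E$ is bounded by $r^{-2s}\exp(-c\,\epsilon^{-\delta}e^{\delta(K-B)|\cdot|^\rho})$ when the points are close, and by $\exp(-c|\gamma-\gamma'|^\delta r^{-\delta})$ in general. The weights enter through the ratio $e^{\tau|\gamma|^\eta+L|\gamma|^\rho}/e^{\tau|\gamma'|^\eta+L|\gamma'|^\rho}$. When $|\gamma-\gamma'|\ge1$, the decay $e^{-c|\gamma-\gamma'|^\delta r_{\gamma'}^{-\delta}}$ with $\delta>\eta$ beats $e^{\tau(|\gamma|^\eta-|\gamma'|^\eta)}\le e^{\tau|\gamma-\gamma'|^\eta}$. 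When the points are close, the double-exponential smallness absorbs $e^{L|\cdot|^\rho}$ and $r^{-2s}$. The extra weight $e^{L|\lambda|^\rho}$ on $\alpha$ is needed because $(-\Delta)^s\varphi_\lambda$ carries the factor $r_\lambda^{-2s}=\epsilon^{-2s}e^{2sK|\lambda|^\rho}$, so we take $L\ge 2sK$. Summing the row estimates with Proposition~\ref{prop:cntg-sum}, which applies because $\eta>\rho$, together with Proposition~\ref{prop:dens-cntg}, makes every row sum arbitrarily small as $\epsilon\to0$. This yields $\|E\|<1$.

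We then set $Q_{\tau,L}=S(I+E)^{-1}$, inverting by a Neumann series, and this gives \eqref{eq:two-set-interpolation}. For \eqref{eq:interp-decay} we differentiate termwise:
\begin{equation*}
    |\partial^\nu\varphi_\lambda(x)|\lesssim r_\lambda^{-|\nu|}e^{-c(|x-\lambda|/r_\lambda)^\delta}.
\end{equation*}
The coefficients are bounded by $\|(\alpha,\beta)\|e^{-\tau|\lambda|^\eta-L|\lambda|^\rho}$, and since $L$ is large this factor absorbs $r_\lambda^{-|\nu|}$ for $|\nu|$ bounded, perhaps after shrinking $\tau$ to $\tau/2$. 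Splitting according to whether $|x-\lambda|\le |x|/2$ or not, and summing with Proposition~\ref{prop:cntg-sum}, gives the bound $e^{-c_\nu|x|^\eta}$. The same computation shows that $f$ and $(-\Delta)^sf$ are Schwartz and that $Q_{\tau,L}$ is continuous.
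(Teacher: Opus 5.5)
Your overall architecture (Gevrey bumps from Proposition~\ref{prop:phi-psi}, synthesis operator, $\mathcal{T}S=I+E$, Neumann series, termwise decay) is the paper's. However, the step you flag as delicate is a genuine gap. It comes from using one common rate $K$, and one common $\epsilon$, for the scales on both $\Lambda$ and $M$.

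Only within-set separation is assumed. So pairs $\lambda\in\Lambda$, $\mu\in M$ with $\lambda\neq\mu$ and $|\lambda-\mu|\asymp r_\mu$ can occur at arbitrarily large $|\mu|$. For example, take $\Lambda=\mathbb{N}$ and $M=\{n+\theta_n\}$ with $\theta_n\to0$ sweeping through all scales $e^{-qn^\rho}$.

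For such a pair, the two weighted cross entries of $E$ are $a=e^{\Omega_1(\lambda)-\Omega_0(\mu)}\psi_\mu(\lambda)$ and $b=e^{\Omega_0(\mu)-\Omega_1(\lambda)}(-\Delta)^s\varphi_\lambda(\mu)$, where $\Omega_1=\tau|\cdot|^\eta+L|\cdot|^\rho$ and $\Omega_0=\tau|\cdot|^\eta$. The weights cancel in the product. With a common scale, $ab\approx\psi(-y)\,((-\Delta)^s\varphi)(y)$ with $y=(\mu-\lambda)/r_\mu$. This product does not depend on $K$, $\epsilon$ or $L$, and it is a fixed nonzero number for suitable pairs, since it is a nonzero real-analytic function of $y$. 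Hence $\|E\|\ge\max(|a|,|b|)\ge|ab|^{1/2}$ cannot be made small.

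Concretely:
\begin{itemize}
\item If $L>2sK$, then $|a|\approx\epsilon^{2s}e^{(L-2sK)|\mu|^\rho}|\psi(-y)|$ is unbounded along such pairs, so $E$ is not even bounded.
\item If $L=2sK$, then $|b|\approx\epsilon^{-2s}|(-\Delta)^s\varphi(y)|$, which blows up as $\epsilon\to0$.
\end{itemize}
None of your fallbacks repairs this. Vanishing at the center only gains when $|\lambda-\mu|\ll r$. The $2\times2$ identity structure only covers $\lambda=\mu$. Within-set separation gives no lower bound on distances between $\Lambda$ and $M$.

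The missing idea is to decouple the two scales, which is what the paper does. Outside finite sets, take $r_\lambda=e^{-\kappa_0|\lambda|^\rho}$ and $t_\mu=e^{-\kappa_1|\mu|^\rho}$, chosen in this order: first $\kappa_0>B_\Lambda$, then $L$ with $L-2s\kappa_0>C$, then $\kappa_1>B_M$ with $2s\kappa_1-L>C$. Here $C$ is the counting exponent of Proposition~\ref{prop:dens-cntg}.

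Use $|\mu|^\eta-|\lambda|^\eta\le|\mu-\lambda|^\eta$ and $|\lambda|^\rho-|\mu|^\rho\le|\lambda-\mu|^\rho$. Then absorb $e^{\tau|u|^\eta+L|u|^\rho}$ into half of $e^{-a|u/r|^\delta}$, which is possible since $r\le1$ and $\rho<\eta<\delta$. This gives $\Lambda\to M$ entries $\lesssim e^{-(L-2s\kappa_0)|\lambda|^\rho}$ and $M\to\Lambda$ entries $\lesssim e^{-(2s\kappa_1-L)|\mu|^\rho}$, \emph{regardless of how close $\lambda$ and $\mu$ are}. Both bounds are summable, so the tails are uniformly small. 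The finitely many remaining points are handled by shrinking their radii individually, as in Lemma~\ref{lem:existence_rl}. With this choice, the rest of your argument goes through.

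One smaller point also needs justification: that $(-\Delta)^s$ of the series equals the series of $(-\Delta)^s\varphi_\lambda$ and $(-\Delta)^s\psi_\mu$. This is used both for $(-\Delta)^sf\in\Sp(\R^d)$ and for $\mathcal{T}S=I+E$. The paper gets it from $\operatorname{supp}\widehat{\varphi_\lambda}\cup\operatorname{supp}\widehat{\psi_\mu}\subset\{|\xi|\ge1\}$. On that set, $(-\Delta)^s$ agrees with the multiplier $\chi(\xi)|\xi|^{2s}$, which is continuous on $\Sp(\R^d)$.
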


The sets $\Lambda$ and $M$ in the theorem above are not required to be disjoint. Before turning to the proof, we record two auxiliary lemmas. The first is a summation estimate for the exponentially small radii that will appear in the construction. It will be used to prove the convergence of the relevant series and to obtain the desired subexponential decay of \eqref{eq:interp-decay}.

\begin{lemma}\label{lem:properties_rl}
Let $0<\rho<\eta<\delta<1$ and $\Gamma\subset\R^d$ be $\rho$-separated. Suppose that $0<r_\gamma\leq1$, $\gamma\in\Gamma$, is a family of radii such that, outside a finite subset of $\Gamma$, $r_\gamma=e^{-\kappa|\gamma|^\rho}$ for some $\kappa>0$. Then the following assertions hold.
\begin{enumerate}
\item\label{item:absor-scale} For every $K\geq0$ and $\varepsilon>0$, there exists $C_{K,\varepsilon}>0$ such that $r_\gamma^{-K} \leq C_{K,\varepsilon}e^{\varepsilon|\gamma|^\eta}$ for $\gamma\in\Gamma$.
\item\label{item:decay-scale} For every $K\geq0$ and $\sigma, a>0$, there exists a constant $c>0$ such that
\begin{equation*}
    \sum_{\gamma\in\Gamma} e^{-\sigma|\gamma|^\eta} r_\gamma^{-K} \exp\hp{-a \hmm{\frac{x-\gamma}{r_\gamma}}^\delta} \lesssim e^{-c|x|^\eta} \quad \text{for } x\in\R^d.
\end{equation*}
\end{enumerate}
\end{lemma}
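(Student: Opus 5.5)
For item~\hyperref[item:absor-scale]{$(i)$} the plan is a direct comparison of exponents. Outside the finite exceptional set we have $r_\gamma^{-K}=e^{K\kappa|\gamma|^\rho}$. Since $\rho<\eta$, the inequality $K\kappa t^\rho\le \varepsilon t^\eta+C$ holds for all $t\ge0$, with $C=\sup_{t\geq 0}(K\kappa t^\rho-\varepsilon t^\eta)<\infty$. On the finite exceptional set the radii lie in $(0,1]$ and are bounded below by a positive constant. Enlarging the constant therefore gives $r_\gamma^{-K}\le C_{K,\varepsilon}e^{\varepsilon|\gamma|^\eta}$ for every $\gamma\in\Gamma$.

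For item~\hyperref[item:decay-scale]{$(ii)$} I would first apply $(i)$ with $\varepsilon=\sigma/4$, so the general term is bounded by
\begin{equation*}
e^{-\frac{3\sigma}{4}|\gamma|^\eta}\exp\hp{-a|x-\gamma|^\delta r_\gamma^{-\delta}}.
\end{equation*}
Because $r_\gamma\le1$, the last factor is at most $\exp(-a|x-\gamma|^\delta)$. I would then split the sum according to whether $|\gamma|\ge |x|/2$ or $|\gamma|<|x|/2$.

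In the first range, $e^{-\frac{3\sigma}{4}|\gamma|^\eta}\le e^{-\frac{\sigma}{4}|\gamma|^\eta}e^{-\frac{\sigma}{2}2^{-\eta}|x|^\eta}$. Proposition~\ref{prop:cntg-sum} gives $\sum_\gamma e^{-\frac{\sigma}{4}|\gamma|^\eta}<\infty$, so this part of the sum is $\lesssim e^{-c|x|^\eta}$.

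In the second range, $|x-\gamma|\ge|x|/2$, so each term is at most $e^{-\frac{3\sigma}{4}|\gamma|^\eta}e^{-a2^{-\delta}|x|^\delta}$. Summing over $\gamma$, again using Proposition~\ref{prop:cntg-sum}, gives $\lesssim e^{-a2^{-\delta}|x|^\delta}$. Since $\delta>\eta$, this is $\lesssim e^{-c|x|^\eta}$ for $|x|\ge1$, and the case $|x|\le1$ is trivial because the sum is uniformly bounded. Taking $c$ to be the minimum of the two constants concludes.

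The main (mild) obstacle is simply ensuring that the growth $r_\gamma^{-K}$ does not destroy summability. That is handled by spending only part of the weight $e^{-\sigma|\gamma|^\eta}$ through $(i)$ and using the gap $\rho<\eta$ via Proposition~\ref{prop:cntg-sum}. Note that the sharper factor $r_\gamma^{-\delta}$ in the exponent is not needed here; it is simply discarded using $r_\gamma\le1$.
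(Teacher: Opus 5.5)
Your proposal is correct and follows essentially the same route as the paper. For (i) you compare exponents using $\rho<\eta$ and absorb the finitely many exceptional radii. For (ii) you spend part of the weight $e^{-\sigma|\gamma|^\eta}$ to absorb $r_\gamma^{-K}$, split into $|\gamma|\ge |x|/2$ and $|\gamma|<|x|/2$, and apply Proposition~\ref{prop:cntg-sum} in both regions, together with $r_\gamma\le 1$ and $\delta>\eta$ in the second.
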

\begin{proof}
\textit{Item} \hyperref[item:absor-scale]{$(i)$}. If $\gamma$ lies outside of the finite exceptional set, then $r_\gamma^{-K}=e^{K\kappa|\gamma|^\rho}.$ Since $\eta>\rho$, for every $\varepsilon>0$ there is $C_{K,\varepsilon}>0$ such that $K\kappa|\gamma|^\rho\leq \varepsilon|\gamma|^\eta+C_{K,\varepsilon}$, for $\gamma\in\Gamma$. Increasing $C_{K,\varepsilon}$ absorbs the finitely many exceptional radii.

\textit{Item} \hyperref[item:decay-scale]{$(ii)$}. Choose $0<\varepsilon<\sigma$. By item~\hyperref[item:absor-scale]{$(i)$} we have $e^{-\sigma|\gamma|^\eta}r_\gamma^{-K} \leq C_{K,\varepsilon} e^{-(\sigma-\varepsilon)|\gamma|^\eta}$. Replacing $\sigma-\varepsilon$ by $\sigma_1$, it is enough to bound
\begin{equation*}
    S(x)= \sum_{\gamma\in\Gamma} e^{-\sigma_1|\gamma|^\eta} \exp\hp{-a\hmm{\frac{x-\gamma}{r_\gamma}}^\delta}
\end{equation*}
for $\sigma_1>0$. To prove that, we split the sum into $|\gamma|\geq |x|/2$ and $|\gamma|<|x|/2$. In the first region, $e^{-\sigma_1|\gamma|^\eta} \leq e^{-c|x|^\eta}e^{-\frac{\sigma_1}{2}|\gamma|^\eta}$ for a constant $c>0$. Hence Proposition~\ref{prop:cntg-sum} gives
\begin{equation*}
    \sum_{\substack{\gamma\in\Gamma\\ |\gamma|\geq |x|/2}} e^{-\sigma_1|\gamma|^\eta} \exp\hp{-a\hmm{\frac{x-\gamma}{r_\gamma}}^\delta} \lesssim e^{-c|x|^\eta}.
\end{equation*}
In the second region, $|x-\gamma|\geq |x|/2$ and $r_\gamma\leq1$, so
\begin{equation*}
    \exp\hp{-a\hmm{\frac{x-\gamma}{r_\gamma}}^\delta}\leq e^{-a2^{-\delta}|x|^\delta} \lesssim e^{-c|x|^\eta},
\end{equation*}
because $\delta>\eta$. A second application of Proposition~\ref{prop:cntg-sum} gives the result.
\end{proof}

The second lemma chooses the radii so that four sums are uniformly small. These estimates will later be used to control the error operator in the weighted sequence norm.
\begin{lemma}\label{lem:existence_rl}
Let $s>0$, $0<\rho<\eta<\delta<1$, and $\Lambda, M \subset\R^d$ both be $\rho$-separated. For $\tau, L>0$ set
\begin{equation*}
    \Omega_1(x)=\tau |x|^\eta+L|x|^\rho, \qquad \Omega_0(x)=\tau |x|^\eta.
\end{equation*}
Let $A_\Lambda,B_\Lambda>0$ and $A_M,B_M>0$ be the separating constants for $\Lambda$ and $M$, respectively. Fix $\kappa_0>B_\Lambda, \kappa_1>B_M$, and assume that
\begin{equation}\label{eq:simul-summation}
    \sum_{\lambda\in\Lambda} e^{-(L-2s\kappa_0)|\lambda|^\rho}<\infty, \qquad  \sum_{\mu\in M} e^{-(2s\kappa_1-L)|\mu|^\rho}<\infty.
\end{equation}
Then, for every $a, \varepsilon>0$, there exist finite sets $F_\Lambda\subset\Lambda$, $F_M\subset M$, and radii $0<r_\lambda,t_\mu\leq1$ satisfying
\begin{equation}\label{eq:radius-small}
    r_\lambda=e^{-\kappa_0|\lambda|^\rho} \text{ for } \lambda\in\Lambda\setminus F_\Lambda, \qquad  t_\mu=e^{-\kappa_1|\mu|^\rho} \text{ for } \mu\in M\setminus F_M,
\end{equation}
and such that
\begin{equation}\label{eq:simul-Lambda-Lambda}
    \sup_{\lambda'\in\Lambda} \sum_{\substack{\lambda\in\Lambda\\ \lambda\ne\lambda'}} e^{\Omega_1(\lambda')-\Omega_1(\lambda)} r_\lambda^{-2s} \exp\hp{-a\hmm{\frac{\lambda'-\lambda}{r_\lambda}}^\delta} <\varepsilon,
\end{equation}
\begin{equation}\label{eq:simul-M-M}
    \sup_{\mu'\in M} \sum_{\substack{\mu\in M\\ \mu\ne\mu'}} e^{\Omega_0(\mu')-\Omega_0(\mu)} \exp\hp{-a\hmm{\frac{\mu'-\mu}{t_\mu}}^\delta} <\varepsilon,
\end{equation}
\begin{equation}\label{eq:simul-Lambda-M}
    \sup_{\mu\in M} \sum_{\substack{\lambda\in\Lambda\\ \lambda\ne\mu}} e^{\Omega_0(\mu)-\Omega_1(\lambda)} r_\lambda^{-2s} \exp\hp{-a\hmm{\frac{\mu-\lambda}{r_\lambda}}^\delta} <\varepsilon,
\end{equation}
\begin{equation}\label{eq:simul-M-Lambda}
    \sup_{\lambda\in\Lambda} \sum_{\substack{\mu\in M\\ \mu\ne\lambda}} e^{\Omega_1(\lambda)-\Omega_0(\mu)} t_\mu^{2s} \exp\hp{-a\hmm{\frac{\lambda-\mu}{t_\mu}}^\delta} <\varepsilon.
\end{equation}
\end{lemma}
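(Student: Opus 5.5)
The plan is to fix the radii by the prescribed formulas \eqref{eq:radius-small} outside a ball $B(0,R)$, and to make the finitely many radii inside that ball extremely small. In each of the four sums I split according to whether the summation index (the point carrying the radius $r_\lambda$ or $t_\mu$) lies outside or inside $B(0,R)$.

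First I choose $R$ so large that the contribution of indices with $|\lambda|>R$ (respectively $|\mu|>R$) is at most $\varepsilon/2$, uniformly in the free variable. Then I set $F_\Lambda=\Lambda\cap \overline{B(0,R)}$ and $F_M=M\cap\overline{B(0,R)}$; these are finite since the sets are discrete. Finally I shrink the radii on these finite sets so that their contribution is also below $\varepsilon/2$. Shrinking the exceptional radii does not affect the tail estimate, so the two choices do not interfere.

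\emph{Tails.} Take $\lambda\notin F_\Lambda$, so that $r_\lambda=e^{-\kappa_0|\lambda|^\rho}$, and consider the self-sum \eqref{eq:simul-Lambda-Lambda} with $\lambda'\neq\lambda$.
\begin{itemize}
\item If $|\lambda'|\le 2|\lambda|+1$, then $e^{\Omega_1(\lambda')-\Omega_1(\lambda)}\le e^{C|\lambda|^\eta}$. The separation \eqref{eq:const-def-sep} gives $|\lambda'-\lambda|/r_\lambda\ge A_\Lambda e^{(\kappa_0-B_\Lambda)|\lambda|^\rho}$. Since $\kappa_0>B_\Lambda$, the factor $\exp(-a|\cdot|^\delta)$ is doubly exponentially small in $|\lambda|^\rho$. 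It beats $e^{C|\lambda|^\eta}$, the factor $r_\lambda^{-2s}$, and the number of admissible $\lambda'$, which is at most $e^{C|\lambda|^\rho}$ by Proposition~\ref{prop:dens-cntg}.
\item If $|\lambda'|>2|\lambda|+1$, then $|\lambda'-\lambda|\ge|\lambda'|/2$. The factor $\exp(-a(|\lambda'|/2)^\delta r_\lambda^{-\delta})$ dominates $e^{\Omega_1(\lambda')}r_\lambda^{-2s}$ because $\delta>\eta>\rho$.
\end{itemize}
In both cases the term is bounded by something like $e^{-c|\lambda|^\eta}$ uniformly in $\lambda'$. Proposition~\ref{prop:cntg-sum} then makes the sum over $|\lambda|>R$ as small as desired. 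The sum \eqref{eq:simul-M-M} is handled the same way, using $\kappa_1>B_M$.

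\emph{Cross sums.} Here lies the main obstacle: no separation between $\Lambda$ and $M$ is assumed, so a point $\mu$ may lie arbitrarily close to some $\lambda\ne\mu$, and the exponential factor gives no help. This is exactly where \eqref{eq:simul-summation} enters. For \eqref{eq:simul-Lambda-M} I split into two regions.
\begin{itemize}
\item If $|\mu-\lambda|\le1$, then $\Omega_0(\mu)-\Omega_0(\lambda)=\tau(|\mu|^\eta-|\lambda|^\eta)$ is bounded, since $\eta<1$. Bounding the exponential factor by $1$, the term is $\lesssim e^{-L|\lambda|^\rho}r_\lambda^{-2s}=e^{-(L-2s\kappa_0)|\lambda|^\rho}$. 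This is summable by the first condition in \eqref{eq:simul-summation}, and each $\lambda$ is counted at most once for a fixed $\mu$, so its tail over $|\lambda|>R$ is small.
\item If $|\mu-\lambda|>1$, the factor $\exp(-a r_\lambda^{-\delta}|\mu-\lambda|^\delta)$ again dominates $e^{\Omega_0(\mu)}$, arguing as in the two cases above.
\end{itemize}
Symmetrically, for \eqref{eq:simul-M-Lambda} the near region produces $e^{(L-2s\kappa_1)|\mu|^\rho}$, which is controlled by the second condition in \eqref{eq:simul-summation}. The far region again carries superexponential decay, now with the harmless factor $t_\mu^{2s}\le1$.

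\emph{Exceptional indices.} For each of the finitely many $\gamma$ in $F_\Lambda$ or $F_M$, the distances $d_\gamma=\dist(\gamma,(\Lambda\cup M)\setminus\{\gamma\})$ are positive by discreteness. The supremum over the free variable $x$ of
\[
e^{\tau|x|^\eta+L|x|^\rho}\, r^{-2s}\exp\bigl(-a r^{-\delta}|x-\gamma|^\delta\bigr),
\qquad |x-\gamma|\ge d_\gamma,
\]
tends to $0$ as $r\to0$. Indeed, for large $|x|$ the term $r^{-\delta}|x|^\delta$ beats $|x|^\eta$, and for bounded $|x|$ the exponential beats $r^{-2s}$. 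So choosing these radii small enough makes the total exceptional contribution less than $\varepsilon/2$ in all four sums simultaneously, which finishes the proof.
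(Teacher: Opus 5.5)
Your proposal is correct and follows essentially the same route as the paper. The tails with $r_\lambda=e^{-\kappa_0|\lambda|^\rho}$ are controlled by the $\rho$-separation (splitting according to whether $|\lambda'|$ is comparable to $|\lambda|$), the cross tails are controlled by \eqref{eq:simul-summation}, and the finitely many exceptional radii are then shrunk using the positive distances $d_\gamma$. The only differences are cosmetic: you split the cross sums into $|\mu-\lambda|\le 1$ and $|\mu-\lambda|>1$, whereas the paper handles both at once via $|\mu|^\eta-|\lambda|^\eta\le|\mu-\lambda|^\eta$ and absorbs $\tau|u|^\eta$ into half of $a|u/r_\lambda|^\delta$; and your count of admissible $\lambda'$ is unnecessary, since each term is bounded uniformly in the free variable and only the sum over the radius-carrying index matters.
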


\begin{proof}
From Proposition~\ref{prop:dens-cntg}, for $\Gamma=\Lambda,M$, there exists a constant $C>0$ such that for $R\geq 1$
\begin{equation}\label{eq:counting-simul}
    \#\hch{\gamma\in\Gamma \st |\gamma|\leq R} \leq Ce^{CR^\rho}.
\end{equation}
For sufficiently large $|\lambda|$ and $|\mu|$, set $r_\lambda=e^{-\kappa_0|\lambda|^\rho}$ and $t_\mu=e^{-\kappa_1|\mu|^\rho}$, respectively. We first estimate the tails in \eqref{eq:simul-Lambda-Lambda}--\eqref{eq:simul-M-Lambda}. We begin with the proof that
\begin{equation}\label{eq:tail-gamma-diag}
    \lim_{R\to \infty}\sup_{\lambda'\in\Lambda} \sum_{\substack{\lambda\in\Lambda\\ \lambda\ne\lambda'\\ |\lambda|\geq R}} e^{\Omega_1(\lambda')-\Omega_1(\lambda)} e^{2s\kappa_0|\lambda|^\rho} \exp\hp{-a e^{\delta\kappa_0|\lambda|^\rho} |\lambda'-\lambda|^\delta} = 0.
\end{equation}
The $\rho$-separation of $\Lambda$ gives, for $\lambda'\ne\lambda$,
\begin{equation}\label{eq:adm-kappa-gamma}
    e^{\kappa_0|\lambda|^\rho}|\lambda'-\lambda|\geq A_\Lambda\exp\hp{(\kappa_0-B_\Lambda)|\lambda|^\rho}.
\end{equation}

If $|\lambda'|\leq2|\lambda|$, then $\Omega_1(\lambda')-\Omega_1(\lambda)+2s\kappa_0|\lambda|^\rho \leq C|\lambda|^\eta+C|\lambda|^\rho$. Combining this with \eqref{eq:adm-kappa-gamma} and $\kappa_0>B_\Lambda$, we obtain, for all sufficiently large $|\lambda|$,
\begin{align*}
    \Omega_1(\lambda')-\Omega_1(\lambda) +2s\kappa_0|\lambda|^\rho -a e^{\delta\kappa_0|\lambda|^\rho}|\lambda'-\lambda|^\delta &\leq C|\lambda|^\eta+C|\lambda|^\rho -aA_\Lambda^\delta \exp\hp{\delta(\kappa_0-B_\Lambda)|\lambda|^\rho} \\ &\leq -c_1 e^{c_2|\lambda|^\rho},
\end{align*}
for suitable constants $c_1,c_2>0$.

If instead $|\lambda'|>2|\lambda|$, then $|\lambda'-\lambda|\geq |\lambda'|/2$, whence
\begin{equation*}
    \Omega_1(\lambda')-\Omega_1(\lambda) +2s\kappa_0|\lambda|^\rho -ae^{\delta\kappa_0|\lambda|^\rho}|\lambda'-\lambda|^\delta \leq g(|\l^\prime|),
\end{equation*}
where $g(t)=\tau t^\eta+Lt^\rho-a2^{-\delta}e^{\delta\kappa_0|\lambda|^\rho}t^\delta$. For fixed sufficiently large $|\lambda|$, the function $g$ is decreasing on $[2|\lambda|,\infty)$. Indeed, $g^\prime(t) = \tau\eta t^{\eta-1}+L\rho t^{\rho-1}-a\delta2^{-\delta}e^{\delta\kappa_0|\lambda|^\rho}t^{\delta-1}$, and the last term dominates the first two on $[2|\lambda|,\infty)$ once $|\lambda|$ is large, since $\rho<\eta<\delta$. Thus, the preceding expression is bounded above by its value at $t=2|\lambda|$, and hence again by $-c_1 e^{c_2|\lambda|^\rho}$ for all sufficiently large $|\lambda|$. 

Consequently,
\begin{equation*}
    \sup_{\lambda'\in\Lambda} \sum_{\substack{\lambda\in\Lambda\\ \lambda\ne\lambda'\\ |\lambda|\geq R}} e^{\Omega_1(\lambda')-\Omega_1(\lambda)} e^{2s\kappa_0|\lambda|^\rho} \exp\hp{-a e^{\delta\kappa_0|\lambda|^\rho} |\lambda'-\lambda|^\delta} \leq \sum_{\substack{\lambda\in\Lambda\\ |\lambda|\geq R}} e^{-c_1e^{c_2|\lambda|^\rho}} .
\end{equation*}
The right-hand side tends to $0$ as $R\to\infty$, by the counting estimate \eqref{eq:counting-simul}. This proves \eqref{eq:tail-gamma-diag}.

The same argument, applied to $M$, with $\Omega_0$, $\kappa_1$, and no term involving $L$, gives
\begin{equation}\label{eq:tail-M-diag}
    \lim_{R\to \infty}\sup_{\mu'\in M} \sum_{\substack{\mu\in M\\ \mu\ne\mu'\\ |\mu|\geq R}} e^{\Omega_0(\mu')-\Omega_0(\mu)} \exp\hp{-a\hmm{\frac{\mu'-\mu}{t_\mu}}^\delta} =0.
\end{equation}
We next estimate the mixed tails. Since $0<\eta<1$, we have $|\mu|^\eta-|\lambda|^\eta\leq |\mu-\lambda|^\eta$. Therefore
\begin{equation*}
    \Omega_0(\mu)-\Omega_1(\lambda) +2s\kappa_0|\lambda|^\rho -a\hmm{\frac{\mu-\lambda}{r_\lambda}}^\delta \leq -(L-2s\kappa_0)|\lambda|^\rho +\tau|\mu-\lambda|^\eta -a\hmm{\frac{\mu-\lambda}{r_\lambda}}^\delta.
\end{equation*}
Since $r_\lambda\leq1$ and $\eta<\delta$, there exists $C>0$, independent of $\lambda$, such that $\tau |u|^\eta \leq C+\frac a2\hmm{\frac{u}{r_\lambda}}^\delta$ for $u\in\R^d$. Then
\begin{equation}\label{eq:mixed-gamma-pointwise}
    e^{\Omega_0(\mu)-\Omega_1(\lambda)} r_\lambda^{-2s} \exp\hp{-a\hmm{\frac{\mu-\lambda}{r_\lambda}}^\delta} \lesssim e^{-(L-2s\kappa_0)|\lambda|^\rho} \exp\hp{-\frac a2\hmm{\frac{\mu-\lambda}{r_\lambda}}^\delta}.
\end{equation}
In particular,
\begin{equation*}
    e^{\Omega_0(\mu)-\Omega_1(\lambda)} r_\lambda^{-2s} \exp\hp{-a\hmm{\frac{\mu-\lambda}{r_\lambda}}^\delta} \lesssim e^{-(L-2s\kappa_0)|\lambda|^\rho},
\end{equation*}
and by \eqref{eq:simul-summation}
\begin{equation}\label{eq:tail-gamma-mixed}
    \lim_{R\to \infty}\sup_{\mu\in M} \sum_{\substack{\lambda\in\Lambda\\ \lambda\ne\mu\\ |\lambda|\geq R}} e^{\Omega_0(\mu)-\Omega_1(\lambda)} r_\lambda^{-2s} \exp\hp{-a\hmm{\frac{\mu-\lambda}{r_\lambda}}^\delta} =0.
\end{equation}
Similarly, since $0<\rho<\eta<1$, we have $|\lambda|^\eta-|\mu|^\eta\leq |\lambda-\mu|^\eta$ and $|\lambda|^\rho-|\mu|^\rho\leq |\lambda-\mu|^\rho$. Thus
\begin{equation*}
    \Omega_1(\lambda)-\Omega_0(\mu) -2s\kappa_1|\mu|^\rho -a\hmm{\frac{\lambda-\mu}{t_\mu}}^\delta \leq -(2s\kappa_1-L)|\mu|^\rho +C|\lambda-\mu|^\eta+C|\lambda-\mu|^\rho -a\hmm{\frac{\lambda-\mu}{t_\mu}}^\delta .
\end{equation*}
Since $t_\mu\leq1$ and $\rho<\eta<\delta$, we may absorb the two positive powers into half of the last term $C|u|^\eta+C|u|^\rho \leq C_a+\frac a2\hmm{\frac{u}{t_\mu}}^\delta$ for $u\in\R^d$. Therefore
\begin{equation*}
    e^{\Omega_1(\lambda)-\Omega_0(\mu)} t_\mu^{2s} \exp\hp{-a\hmm{\frac{\lambda-\mu}{t_\mu}}^\delta} \lesssim e^{-(2s\kappa_1-L)|\mu|^\rho},
\end{equation*}
and by \eqref{eq:simul-summation}
\begin{equation}\label{eq:tail-M-mixed}
    \lim_{R \to \infty}\sup_{\lambda\in\Lambda} \sum_{\substack{\mu\in M\\ \mu\ne\lambda\\ |\mu|\geq R}} e^{\Omega_1(\lambda)-\Omega_0(\mu)} t_\mu^{2s} \exp\hp{-a\hmm{\frac{\lambda-\mu}{t_\mu}}^\delta} =0.
\end{equation}

Choose $R_\Lambda$ and $R_M$ so large that the tails in \eqref{eq:tail-gamma-diag}, \eqref{eq:tail-gamma-mixed} and \eqref{eq:tail-M-diag}, \eqref{eq:tail-M-mixed} are uniformly smaller than $\frac\varepsilon2$, respectively. Set
\begin{equation*}
    F_\Lambda=\{\lambda\in\Lambda \st |\lambda|<R_\Lambda\}, \qquad F_M=\{\mu\in M \st |\mu|<R_M\}.
\end{equation*}
We will now choose the finitely many exceptional radii. Fix $\lambda\in F_\Lambda$, put $V_\l = (\Lambda \cup M)\setminus\{\lambda\}$ and
\begin{equation*}
    d_\lambda=\min\hch{1,\dist(\lambda,\Lambda\setminus\{\lambda\}),\dist(\lambda,M\setminus\{\lambda\})}>0,
\end{equation*}
where the distance to the empty set is interpreted as $+\infty$. If $v\in V_\lambda$, and $u=|v-\lambda|$, then $u\geq d_\lambda$ and $\Omega_j(v)\leq C_\lambda+Cu^\eta+Cu^\rho$ for $j=0,1$. Consequently, for $0<r\leq1$,
\begin{equation*}
    e^{\Omega_j(v)-\Omega_1(\lambda)} r^{-2s} \exp\hp{-a\hmm{\frac{v-\lambda}{r}}^\delta} \lesssim_\lambda r^{-2s} \exp\hp{h(u)},
\end{equation*}
where $h(u) = Cu^\eta+Cu^\rho-ar^{-\delta}u^\delta$. Since $\rho<\eta<\delta$, the function $h$ is decreasing on $[d_\lambda,\infty)$, provided $r>0$ is sufficiently small. Hence
\begin{equation*}
    \sup_{v\in V_\lambda} e^{\Omega_j(v)-\Omega_1(\lambda)} r^{-2s} \exp\hp{-a\hmm{\frac{v-\lambda}{r}}^\delta} \lesssim_\lambda r^{-2s} \exp\hp{Cd_\lambda^\eta+Cd_\lambda^\rho-ar^{-\delta}d_\lambda^\delta},
\end{equation*}
and the right-hand side goes to $0$ as $r\to0$. Since $F_\Lambda$ is finite, we may choose all $r_\lambda$, $\lambda\in F_\Lambda$, sufficiently small so that
\begin{equation*}
    \sum_{\lambda\in F_\Lambda} \sup_{v\in V_\lambda} e^{\Omega_j(v)-\Omega_1(\lambda)} r_\lambda^{-2s} \exp\hp{-a\hmm{\frac{v-\lambda}{r_\lambda}}^\delta} <\frac\varepsilon2 \quad \text{for }j=0,1.
\end{equation*}
This gives the required finite-term bounds for the estimates from $\Lambda$ to $\Lambda$ and from $\Lambda$ to $M$.

The choice of the exceptional radii $t_\mu$ for $\mu \in F_{M}$ is analogous, because $F_M$ is finite and the additional factor $t^{2s}$ in the mixed term from $M$ to $\Lambda$ is harmless. So we may choose $0<t_\mu\leq1$, $\mu\in F_M$, so small that the finite $M$-term contribution is less than $\frac\varepsilon2$ in each of \eqref{eq:simul-M-M} and \eqref{eq:simul-M-Lambda}.

Combining the tail and finite-term estimates gives all the inequalities \eqref{eq:simul-Lambda-Lambda}--\eqref{eq:simul-M-Lambda}.
\end{proof}

\begin{proof}[Proof of Theorem~\ref{thm:interpol-NUP}]
Fix $\tau>0$. Let $\delta$ be such that $\eta<\delta<1$, and let $\varphi,\psi$ be as in Proposition~\ref{prop:phi-psi}, corresponding to the exponent $\delta$. Thus
\begin{equation*}
    \varphi(0)=1, \quad(-\Delta)^s\varphi(0)=0, \quad\psi(0)=0, \quad(-\Delta)^s\psi(0)=1,
\end{equation*}
there exists an open set $U\Subset\{\xi\in\R^d:1<|\xi|<2\}$ such that $\supp\widehat\varphi\cup\supp\widehat\psi\Subset U$, and for every multiindex $\nu\in\Z_+^d$, there exist constants $C_\nu,c_\nu>0$ such that, for every $x\in\R^d$,
\begin{equation}\label{eq:phi-psi-two-decay}
    |\partial^\nu \varphi(x)|+|\partial^\nu(-\Delta)^s\varphi(x)| + |\partial^\nu \psi(x)|+|\partial^\nu(-\Delta)^s\psi(x)| \le C_\nu e^{-c_\nu|x|^\delta}.
\end{equation}
Fix the constants $C_0,c_0>0$ for the case $|\nu|=0$.

Let $A_\Lambda,B_\Lambda>0$ and $A_M,B_M>0$ be the $\rho$-separation constants for $\Lambda$ and $M$, respectively. By Proposition~\ref{prop:dens-cntg}, there exists $C>0$ such that for $R\geq 1$
\begin{equation*}
    \#(\Lambda\cap B(0,R))+\#(M\cap B(0,R))\lesssim e^{CR^\rho}.
\end{equation*}
This estimate implies that, if $O_{\Lambda,j}=\{\lambda\in\Lambda \st 2^j\le |\lambda|<2^{j+1}\}$, then $\#O_{\Lambda,j}\lesssim e^{C2^{j\rho}}$ and for $a>C$
\begin{equation*}
    \sum_{\lambda\in O_{\Lambda,j}}e^{-a|\lambda|^\rho} \lesssim e^{-(a-C)2^{j\rho}},
\end{equation*}
which is summable in $j$. The same argument applies to $M$. In particular, if $a>C$, then
\begin{equation}\label{eq:rho-summability}
    \sum_{\lambda\in\Lambda}e^{-a|\lambda|^\rho}<\infty, \qquad \sum_{\mu\in M}e^{-a|\mu|^\rho}<\infty.
\end{equation}

Choose $\kappa_0>B_\Lambda$. Let $L>0$ such that $L-2s\kappa_0>C$. Then choose $\kappa_1>B_M$ so large that $2s\kappa_1-L>C$. Now apply Lemma~\ref{lem:existence_rl} with the present sets $\Lambda, M$, with
\begin{equation}\label{eq:choose-par-offdiag}
    \Omega_1(x)=\tau|x|^\eta+L|x|^\rho,\quad\Omega_0(x)=\tau|x|^\eta,\quad a=c_0,\quad\varepsilon=\varepsilon_0,
\end{equation}
where $\varepsilon_0>0$ will be chosen later. By \eqref{eq:rho-summability} and the choices of $L$, $\kappa_0$, $\kappa_1$, the summability assumptions of Lemma~\ref{lem:existence_rl} are fulfilled. Hence there exist finite sets $F_\Lambda\subset\Lambda$, $F_M\subset M$, and radii $0<r_\l,t_\mu\le 1$ for which \eqref{eq:radius-small} holds and the four estimates \eqref{eq:simul-Lambda-Lambda}--\eqref{eq:simul-M-Lambda} are satisfied with $a=c_0$ and $\varepsilon=\varepsilon_0$.

With the scales $r_\lambda$ and $t_\mu$ fixed, define, for $\l \in \Lambda$ and $\mu \in M$,
\begin{equation*}
    \varphi_\l(x)\defi\varphi\hp{\frac{x-\l}{r_\l}}, \qquad  \psi_\mu(x)\defi t_\mu^{2s}\psi\hp{\frac{x-\mu}{t_\mu}}.
\end{equation*}
By Remark~\ref{rem:tranla-dila},
\begin{equation*}
    (-\Delta)^s\varphi_\l(x)=r_\l^{-2s}\hp{(-\Delta)^s\varphi}\hp{\frac{x-\l}{r_\l}},\qquad(-\Delta)^s\psi_\mu(x)=\hp{(-\Delta)^s\psi}\hp{\frac{x-\mu}{t_\mu}},
\end{equation*}
and in particular
\begin{equation}\label{eq:normal-two-set}
    \varphi_\l(\l)=1,\quad(-\Delta)^s\varphi_\l(\l)=0,\quad\psi_\mu(\mu)=0,\quad(-\Delta)^s\psi_\mu(\mu)=1.
\end{equation}
Also, by the chain rule and \eqref{eq:phi-psi-two-decay}, for every multiindex $\nu\in\Z_+^d$,
\begin{align}
    |\partial^\nu \varphi_\lambda(x)|
    &\lesssim_\nu r_\lambda^{-|\nu|} \exp\!\hp{-c_\nu\hmm{\frac{x-\lambda}{r_\lambda}}^\delta},
    & |\partial^\nu \psi_\mu(x)| &\lesssim_\nu t_\mu^{2s-|\nu|} \exp\!\hp{-c_\nu \hmm{\frac{x-\mu}{t_\mu}}^\delta},\label{eq:scaled-der}
    \\
    |\partial^\nu (-\Delta)^s\varphi_\lambda(x)|
    &\lesssim_\nu r_\lambda^{-|\nu|-2s}\exp\!\hp{-c_\nu\hmm{\frac{x-\lambda}{r_\lambda}}^\delta},
    & |\partial^\nu (-\Delta)^s\psi_\mu(x)| 
    &\lesssim_\nu t_\mu^{-|\nu|} \exp\!\hp{-c_\nu\hmm{\frac{x-\mu}{t_\mu}}^\delta}.\label{eq:scaled-frac-der}
\end{align}
For $(\alpha,\beta)\in X_{\tau,L}(\Lambda, M)$, define the synthesis operator
\begin{equation*}
    P(\alpha,\beta)(x) = \sum_{\l\in\Lambda}\alpha(\l)\varphi_\l(x) + \sum_{\mu\in M}\beta(\mu)\psi_\mu(x).
\end{equation*}
We will prove that the series above converges in $\Sp(\R^d)$ and satisfies the decay estimate
\begin{equation}\label{eq:P-op-decay}
    |\partial^\nu P(\alpha,\beta)(x)| \le C_\nu\hn{(\alpha,\beta)}_{X_{\tau,L}}e^{-c_\nu|x|^\eta} \qquad \text{for } x\in\R^d.
\end{equation}
Indeed, by definition of the norm in $X_{\tau,L}(\Lambda, M)$,
\begin{equation}\label{eq:a-b-pointw}
    |\alpha(\l)|\le \hn{(\alpha,\beta)}_{X_{\tau,L}}e^{-\Omega_1(\l)}, \qquad |\beta(\mu)|\le \hn{(\alpha,\beta)}_{X_{\tau,L}}e^{-\Omega_0(\mu)}.
\end{equation}
For $N\in\Z_+$ and $\nu\in\Z_+^d$, write $p_{N,\nu}(f)=\sup_{x\in\R^d}(1+|x|)^N|\partial^\nu f(x)|$. Let $\lambda\in\Lambda$ and set $y_\lambda=\frac{x-\lambda}{r_\lambda}$. Since $0<r_\lambda\le1$, we have 
\begin{equation*}
    (1+|x|) \lesssim (1+|\lambda|)(1+|y_\lambda|).
\end{equation*}
Using this, \eqref{eq:scaled-der}, and \eqref{eq:a-b-pointw},
\begin{align*}
    p_{N,\nu}\hp{\alpha(\lambda)\varphi_\lambda} &\lesssim_{N,\nu}|\alpha(\lambda)|r_\lambda^{-|\nu|} (1+|\lambda|)^N \sup_{y\in\R^d}(1+|y|)^N e^{-c_\nu |y|^\delta} \nonumber\\ &\lesssim_{N,\nu} \hn{(\alpha,\beta)}_{X_{\tau,L}} e^{-\Omega_1(\lambda)} r_\lambda^{-|\nu|} (1+|\lambda|)^N .
\end{align*}
Since $r_\lambda\le1$, we may dominate $r_\lambda^{-|\nu|}$ by $r_\lambda^{-|\nu|-2s}$. By Lemma~\ref{lem:properties_rl}~\hyperref[item:absor-scale]{$(i)$}, applied with $K=|\nu|+2s$, and by the elementary estimate $(1+|\lambda|)^N\le C_\varepsilon e^{\varepsilon|\lambda|^\eta}$, we get, choosing $\varepsilon>0$ sufficiently small and using $e^{-\Omega_1(\lambda)} \le e^{-\tau|\lambda|^\eta}$,
\begin{equation*}
p_{N,\nu}\hp{\alpha(\lambda)\varphi_\lambda}\lesssim_{N,\nu}\hn{(\alpha,\beta)}_{X_{\tau,L}}e^{-c|\lambda|^\eta}.
\end{equation*}
The estimate for the $M$-terms is similar, and we get
\begin{equation*}
p_{N,\nu}\hp{\beta(\mu)\psi_\mu}\lesssim_{N,\nu}\hn{(\alpha,\beta)}_{X_{\tau,L}}e^{-c|\mu|^\eta}.
\end{equation*}
Since $\eta>\rho$, Proposition~\ref{prop:cntg-sum} and the previous two inequalities imply absolute convergence of $P(\alpha,\beta)$ in every Schwartz seminorm. Hence the series $P(\alpha,\beta)$ converges in $\Sp(\R^d)$, and the map
\begin{equation*}
    P:X_{\tau,L}(\Lambda,M)\to \Sp(\R^d)
\end{equation*}
is continuous. It remains to prove the pointwise decay. Termwise differentiation is justified by the absolute convergence proved above. By \eqref{eq:scaled-der}, \eqref{eq:a-b-pointw}, $r_\lambda^{-|\nu|}\le r_\lambda^{-|\nu|-2s}$, and $t_\mu^{2s-|\nu|}\le t_\mu^{-|\nu|-2s}$, we get
\begin{align*}
    |\partial^\nu P(\alpha,\beta)(x)| &\lesssim_\nu \hn{(\alpha,\beta)}_{X_{\tau,L}} \sum_{\lambda\in\Lambda} e^{-\Omega_1(\lambda)} r_\lambda^{-|\nu|-2s} \exp\hp{-c_\nu\hmm{\frac{x-\lambda}{r_\lambda}}^\delta} \\
    &\quad+ \hn{(\alpha,\beta)}_{X_{\tau,L}} \sum_{\mu\in M} e^{-\Omega_0(\mu)} t_\mu^{-|\nu|-2s} \exp\hp{-c_\nu\hmm{\frac{x-\mu}{t_\mu}}^\delta}.
\end{align*}
Since $e^{-\Omega_1(\lambda)}\le e^{-\tau|\lambda|^\eta}$ and $e^{-\Omega_0(\mu)}=e^{-\tau|\mu|^\eta}$, Lemma~\ref{lem:properties_rl}~\hyperref[item:decay-scale]{$(ii)$}, applied separately to $\Lambda$ and to $M$ with $K=|\nu|+2s$, $\sigma=\tau$, and $a=c_\nu$, gives
\begin{equation*}
    |\partial^\nu P(\alpha,\beta)(x)| \lesssim_\nu\hn{(\alpha,\beta)}_{X_{\tau,L}}e^{-c_\nu|x|^\eta}
\end{equation*}
after possibly decreasing $c_\nu>0$. This proves \eqref{eq:P-op-decay}. Define also
\begin{equation}\label{eq:defi-H}
    H(\alpha,\beta) = \sum_{\l\in\Lambda}\alpha(\l)(-\Delta)^s\varphi_\l + \sum_{\mu\in M}\beta(\mu)(-\Delta)^s\psi_\mu.
\end{equation}
Using the same argument as for $P(\alpha, \beta)$, but with \eqref{eq:scaled-frac-der}, \eqref{eq:a-b-pointw}, and again Lemma~\ref{lem:properties_rl}~\hyperref[item:decay-scale]{$(ii)$}, we obtain that the series in \eqref{eq:defi-H} converges in $\Sp(\R^d)$ and
\begin{equation}\label{eq:H-decay}
    |\partial^\nu H(\alpha,\beta)(x)| \lesssim_\nu\hn{(\alpha,\beta)}_{X_{\tau,L}}e^{-c_\nu|x|^\eta}.
\end{equation}
Since $\supp\widehat\varphi\cup\supp\widehat\psi\Subset\{\xi\in\R^d:1<|\xi|<2\}$ and $0<r_\l,t_\mu\le 1$, translations and dilations give
\begin{equation*}
    \supp\widehat{\varphi_\l}\cup\supp\widehat{\psi_\mu}\subset \{\xi\in\R^d \st |\xi|\ge 1\}.
\end{equation*}
Choose $\chi\in C^\infty(\R^d)$ such that $\chi=0$ in a neighbourhood of the origin and $\chi=1$ on $\{|\xi|\ge 1\}$, and set $m(\xi)=\chi(\xi)|\xi|^{2s}$. Since $m\in C^{\infty}(\R^d)$ and all derivatives have polynomial growth, $T_mf=\mathcal F^{-1}(m\widehat f)$ is continuous on $\Sp(\R^d)$. For every finite partial sum $P_F$ of $P$, all Fourier supports stay in $\{|\xi|\ge 1\}$, so $T_mP_F=(-\Delta)^sP_F$. On the other hand, $T_mP_F$ is exactly the corresponding finite partial sum of \eqref{eq:defi-H}. Passing to the limit in $\Sp(\R^d)$ yields
\begin{equation}\label{eq:H-is-frac}
(-\Delta)^sP(\alpha,\beta) \in\Sp(\R^d),\qquad (-\Delta)^sP(\alpha,\beta)=H(\alpha,\beta).
\end{equation}
Define the trace operator by $ \mathcal{T}f = \hp{f|_\Lambda,(-\Delta)^sf|_M}$. By \eqref{eq:normal-two-set},
\begin{equation*}
     \mathcal{T} P=I+E
\end{equation*}
on $X_{\tau,L}(\Lambda,M)$, where $E= (E_0,E_1)$ is given by
\begin{equation}\label{eq:E-zero}
    E_0(\alpha,\beta)(\l') = \sum_{\substack{\l\in\Lambda\\ \l\ne\l'}}\alpha(\l)\varphi_\l(\l') + \sum_{\substack{\mu\in M\\ \mu\ne\l'}}\beta(\mu)\psi_\mu(\l') \quad \text{for } \l'\in\Lambda,
\end{equation}
and
\begin{equation*}
    E_1(\alpha,\beta)(\mu') = \sum_{\substack{\l\in\Lambda\\ \l\ne\mu'}}\alpha(\l)(-\Delta)^s\varphi_\l(\mu') + \sum_{\substack{\mu\in M\\ \mu\ne\mu'}}\beta(\mu)(-\Delta)^s\psi_\mu(\mu') \quad \text{for } \mu'\in M.
\end{equation*}
The exclusions in the mixed sums only remove diagonal terms which vanish by \eqref{eq:normal-two-set} when the two sets intersect. Using \eqref{eq:scaled-der} with $\nu=0$, $1\leq r_\lambda^{-2s}$, \eqref{eq:a-b-pointw}, and \eqref{eq:E-zero}, we obtain
\begin{align*}
    e^{\Omega_1(\l')}|E_0(\alpha,\beta)(\l')| &\le C_0\hn{(\alpha,\beta)}_{X_{\tau,L}} \sum_{\substack{\l\in\Lambda\\ \l\ne\l'}} e^{\Omega_1(\l')-\Omega_1(\l)} r_\l^{-2s} \exp\hp{-c_0\hmm{\frac{\l'-\l}{r_\l}}^\delta}\\
    &\quad+ C_0\hn{(\alpha,\beta)}_{X_{\tau,L}} \sum_{\substack{\mu\in M\\ \mu\ne\l'}} e^{\Omega_1(\l')-\Omega_0(\mu)} t_\mu^{2s} \exp\hp{-c_0\hmm{\frac{\l'-\mu}{t_\mu}}^\delta}.
\end{align*}
Taking the supremum over $\lambda'$ and using the choice of parameters in \eqref{eq:choose-par-offdiag}, the estimates \eqref{eq:simul-Lambda-Lambda} and \eqref{eq:simul-M-Lambda} from Lemma~\ref{lem:existence_rl} yield
\begin{equation*}
    \sup_{\l'\in\Lambda}e^{\Omega_1(\l')}|E_0(\alpha,\beta)(\l')|\le2C_0\varepsilon_0\hn{(\alpha,\beta)}_{X_{\tau,L}}.
\end{equation*}
Similarly, using \eqref{eq:scaled-frac-der} with $\nu=0$, \eqref{eq:simul-Lambda-M}, and \eqref{eq:simul-M-M},
\begin{equation*}
    \sup_{\mu'\in M}e^{\Omega_0(\mu')}|E_1(\alpha,\beta)(\mu')| \le 2C_0\varepsilon_0\hn{(\alpha,\beta)}_{X_{\tau,L}}.
\end{equation*}
Choose $\varepsilon_0>0$ so small that $\varepsilon_0\le \frac1{8C_0}$. Then $\hn{E}_{X_{\tau,L}(\Lambda, M)\to X_{\tau,L}(\Lambda, M)}\le \frac12$. Hence $I+E$ is invertible on $X_{\tau,L}(\Lambda, M)$. Finally, define
\begin{equation*}
    Q_{\tau,L}= P(I+E)^{-1}.
\end{equation*}
Then $Q_{\tau,L}:X_{\tau,L}(\Lambda, M)\to\Sp(\R^d)$ is continuous and linear. Since $(-\Delta)^sP$ takes values in $\Sp(\R^d)$ by \eqref{eq:H-is-frac}, the same is true for $Q_{\tau,L}$. Moreover,
\begin{equation*}
    \mathcal{T} Q_{\tau,L} =  \mathcal{T} P(I+E)^{-1} = (I+E)(I+E)^{-1} = I,
\end{equation*}
which proves \eqref{eq:two-set-interpolation}. It remains only to show the decay estimate for $Q_{\tau, L}$. Let $(\widetilde\alpha,\widetilde\beta)=(I+E)^{-1}(\alpha,\beta)$. Then $Q_{\tau,L}(\alpha,\beta)=P(\widetilde\alpha,\widetilde\beta)$ and $(-\Delta)^sQ_{\tau,L}(\alpha,\beta)=H(\widetilde\alpha,\widetilde\beta)$. Therefore \eqref{eq:P-op-decay}, \eqref{eq:H-decay}, and the boundedness of $(I+E)^{-1}$ on $X_{\tau,L}(\Lambda, M)$ give
\begin{equation*}
    |\partial^\nu Q_{\tau,L}(\alpha,\beta)(x)| + |\partial^\nu(-\Delta)^sQ_{\tau,L}(\alpha,\beta)(x)| \lesssim_\nu\hn{(\alpha,\beta)}_{X_{\tau,L}}e^{-c_\nu|x|^\eta}
\end{equation*}
for $x\in \R^d$. This proves \eqref{eq:interp-decay}.
\end{proof}

\subsection{Proof of Theorem~\ref{thm:nup-strong}}\label{sec:4.4} 

We start by recalling the Gelfand–Shilov-type spaces defined in \eqref{eq:GS-spaces}. So far, it is not clear whether they contain a nonzero function.
\begin{remark}\label{rem:GS-nonempt}
Let $0<s<1$. The range $0<p,q<1$ is sharp for the nontriviality of $\mathcal{S}_{s}^{p,q}(\R^d)$. Indeed, if either $p\geq 1$ or $q\geq 1$, the proof of Theorem~\ref{thm:UP}~\hyperref[item:1-thmUP]{$(i)$} shows that $\mathcal{S}_{s}^{p,q}(\R^d) = \hch{0}$. Conversely, suppose that $0<p,q<1$ and set $\gamma=\max\{p,q\}$. For $\varphi\neq 0$ as in Proposition~\ref{prop:phi-psi}, we have $\varphi \in \mathcal{S}_{s}^{\gamma,\gamma}(\R^d)\subset \mathcal{S}_{s}^{p,q}(\R^d)$.
\end{remark}

\begin{proof}[Proof of Theorem~\ref{thm:nup-strong}]
Set $r=\max\hch{p,q}$ and choose $\max\hch{\rho,r}<\eta<\delta<1$. By Proposition~\ref{prop:Y-ela-two-sets}, we can choose an infinite set $Y=\hch{y_j\st j\geq1}\subset\R^d\setminus(\Lambda\cup M)$ such that $\Gamma\defi\Lambda\cup Y$ is $\rho$-separated. Fix $\tau>0$ and apply Theorem~\ref{thm:interpol-NUP} to the pair $(\Gamma,M)$. Let $Q_{\tau, L}$ be the corresponding interpolation operator. For each $j\geq1$, define $(\alpha_j,\beta_j)\in X_{\tau,L}(\Gamma,M)$ by
\begin{equation*}
\alpha_j= \mathbf{1}_{\hch{\gamma = y_j}}, \qquad\beta_j\equiv0.
\end{equation*}
Set $f_j=Q_{\tau,L}(\alpha_j,\beta_j)$. The interpolation identities give    $f_j(\lambda)=0$ for $\lambda\in\Lambda$ and $(-\Delta)^sf_j(\mu)=0$ for $\mu\in M$. On the other hand, $f_j(y_i)=\mathbf{1}_{\hch{i=j}},$ $i,j\geq1$. Hence $\hch{f_j\st j\geq1}$ is linearly independent. It remains to check that $f_j\in\mathcal S_s^{p,q}(\R^d)$. Since $\eta>r$, \eqref{eq:interp-decay} gives $f_j\in\mathcal S_s^{p,q}(\R^d)$.
\end{proof}

\section{Analytic non-uniqueness results}\label{sec:5}

In this section, we establish the analytic $\NUP$ results. For Theorem~\ref{thm:BL-nup}, our approach is based on Fourier analysis and exploits the orthogonality of exponential functions in $L^2$-spaces. For Theorem~\ref{thm:debranges-nup}, we use results from the theory of de~Branges spaces, which will be defined in Subsection~\ref{sec:5.3}, to construct the required examples. 

\subsection{Proof of Theorem~\ref{thm:BL-nup}}\label{sec:5.1}
We first establish a few preliminary facts. Consider $f\in \mathcal{S}(\R^d)$, $M\in \GL_d(\R)$ and $Q= [0,1)^d$. Define
\begin{equation}\label{eq:def-Q-M}
    \Lambda= M\Z^d, \quad \Lambda^*= M^{-t}\Z^d, \quad Q_M= M^{-t}Q.
\end{equation}
Because $Q_M$ is a fundamental domain for the dual lattice $\Lambda^*$, the family $\{Q_M+\ell \st \ell\in\Lambda^*\}$ tiles $\R^d$. Therefore, for every $k\in \Z^d$, a change of variables combined with Fubini's theorem yields
\begin{equation*}
    f(Mk) = \sum_{\ell\in\Lambda^*} \int_{Q_M+\ell} e^{2\pi i Mk\cdot w} \widehat{f}(w)\dw = \int_{Q_M} \sum_{\ell\in\Lambda^*} e^{2\pi i Mk\cdot(\xi+\ell)} \widehat{f}(\xi+\ell)\dxi.
\end{equation*}
For $\ell\in \Lambda^*$, we have $M^t\ell\in \Z^d$. Thus $Mk\cdot \ell = k\cdot M^t\ell \in \Z$ for every $k\in \Z^d$, which implies $e^{2\pi i Mk\cdot \ell}=1$. Then
\begin{equation}\label{eq:f-per-QM}
    f(Mk) = \int_{Q_M} e^{2\pi i Mk\cdot \xi} \hp{\sum_{\ell\in\Lambda^*}\widehat{f}(\xi+\ell)} \dxi,
\end{equation}
and in the same way
\begin{equation}\label{eq:FL-f-per-QM}
    (-\Delta)^s f(Mk)=\int_{Q_M} e^{2\pi i Mk\cdot \xi} \hp{\sum_{\ell\in\Lambda^*} |\xi+\ell|^{2s}\widehat f(\xi+\ell)}\dxi.
\end{equation}

Motivated by the previous identities, consider the unitary linear map 
\begin{equation*}
    U:L^2(Q_M)\to L^2(Q), \qquad (Uh)(\xi) = |\det M|^{-1/2}h(M^{-t}\xi). 
\end{equation*}
Then for every $k\in\Z^d$ we have $U(e^{2\pi i Mk\cdot \xi})(w)=|\det M|^{-1/2} e^{2\pi i k\cdot w}$. Since the family $\{e^{2\pi i k\cdot w}\}_{k\in\Z^d}$ is complete in $L^2(Q)$, it follows by unitarity of $U$ that $\{e^{2\pi i Mk\cdot w}\}_{k\in\Z^d}$ is complete in $L^2(Q_M)$.

In particular, the vanishing of the sequences $\{f(Mk)\}_{k\in\Z^d}$ and $\{(-\Delta)^s f(Mk)\}_{k\in\Z^d}$ is equivalent to the vanishing, for a.e. $\xi\in Q_M$, of the two periodized functions
\begin{equation}\label{eq:def-F-G-per}
    F_M(\xi)\defi \sum_{\ell\in\Lambda^*}\widehat{f}(\xi+\ell),\qquad G_M(\xi)\defi {\sum_{\ell\in\Lambda^*} |\xi+\ell|^{2s}\widehat f(\xi+\ell)}.
\end{equation}
Let $v$ be the nonzero vector given by $v= M^{-t}e_1\in \Lambda^*$. For $\xi\in Q_M$, define
\begin{equation*}
    a_1(\xi)=\widehat f(\xi+v), \quad a_2(\xi)=\widehat f(\xi+2v), \quad a_3(\xi)=\widehat f(\xi+3v).
\end{equation*}
If $\operatorname{supp}\widehat f$ is contained in
\begin{equation}\label{eq:def-BM}
    B_M = (Q_M+v)\cup(Q_M+2v)\cup(Q_M+3v),
\end{equation}
then the only possibly nonzero terms in the sums defining $F_M(\xi)$ and $G_M(\xi)$ are precisely those corresponding to $v,2v,3v$. Thus, the conditions $F_M(\xi)=0$ and $G_M(\xi)=0$ become the system
\begin{equation}\label{eq:syst-a_j}
\left\{
\begin{aligned}
a_1(\xi) + a_2(\xi) + a_3(\xi) &= 0, \\
|\xi+v|^{2s} a_1(\xi) + |\xi+2v|^{2s} a_2(\xi) + |\xi+3v|^{2s} a_3(\xi) &= 0.
\end{aligned}
\right.
\end{equation}

Now we will construct nontrivial smooth functions $a_1, a_2, a_3$ on $Q_M$ that satisfy the given linear relations and ensure that the resulting patch $\widehat{f}$ belongs to $\Sp(\R^d)$. The main idea is that, for each fixed $\xi \in Q_M$, the unknowns $a_1, a_2, a_3$ satisfy two linear homogeneous equations, so the solution space has dimension at least one. We exhibit a convenient one-parameter family of solutions.

\begin{proof}[Proof of Theorem~\ref{thm:BL-nup}]
Let $M\in \GL_d(\R)$, $Q= [0,1)^d$, and consider again the sets defined in \eqref{eq:def-Q-M}. 

Let $v= M^{-t}e_1\in \Lambda^*$ and $B_M$ as in \eqref{eq:def-BM}. Note that the set $H_v=\{\xi\in\R^d\st |\xi+2v|=|\xi+v|\}$ is a proper affine hyperplane. Indeed, we have $|\xi+2v|^2-|\xi+v|^2=2\xi\cdot v + 3|v|^2$, so
\begin{equation*}
    H_v=\hch{\xi\in\R^d\st \xi\cdot v=-\tfrac{3}{2}|v|^2},
\end{equation*}
which is an affine hyperplane because $v\neq 0$. In particular, there exists a nonempty open set $O$ with $\overline{O} \subset \operatorname{int}(Q_M)\setminus H_v$. Choose a nonzero smooth function $\varphi\in C_c^\infty(O)$, and define $a_3(\xi)=\varphi(\xi)$. We should have by \eqref{eq:syst-a_j} that $a_1(\xi)=-a_2(\xi)-a_3(\xi)$. Substituting this into \eqref{eq:syst-a_j} gives
\begin{equation*}
    \hp{|\xi+2v|^{2s}-|\xi+v|^{2s}} a_2(\xi) +\hp{|\xi+3v|^{2s}-|\xi+v|^{2s}}a_3(\xi)=0.
\end{equation*}
Therefore, the last equation can be solved for $a_2$
\begin{equation}\label{eq:def-a2}
    a_2(\xi) = -\varphi(\xi)\hp{\frac{|\xi+3v|^{2s}-|\xi+v|^{2s}}{|\xi+2v|^{2s}-|\xi+v|^{2s}}}
\end{equation}
and then
\begin{equation}\label{eq:def-a1}
    a_1(\xi) = -a_2(\xi)-a_3(\xi) = -\varphi(\xi)\hp{1-\frac{|\xi+3v|^{2s}-|\xi+v|^{2s}}{|\xi+2v|^{2s}-|\xi+v|^{2s}}}.
\end{equation}
Thus, the quotients on the right-hand side of \eqref{eq:def-a2} and \eqref{eq:def-a1} are smooth functions of $\xi$ on $O$ (on a neighborhood of $\supp \varphi \subset O$, both numerator and denominator are smooth, and the denominator never vanishes). After extending $a_1,a_2,a_3$ by zero outside $O$, we define $\widehat f$ on $\R^d$ by
\begin{equation*}
    \widehat f(\xi)=\sum_{j=1}^3 a_j(\xi-jv).
\end{equation*}
(observe that $kv + O\subset B_M$, $k=1,2,3$). Since $a_1,a_2,a_3$ are in $C_c^\infty(O)$, patching these three pieces together yields a $C^\infty$ function compactly supported inside $\operatorname{int}(B_M)$. Then $\widehat f\in \mathcal S(\R^d)$ and $f\not\equiv0$. By construction, for every $\xi\in Q_M$ the triple $(a_1(\xi),a_2(\xi),a_3(\xi))$ satisfies \eqref{eq:syst-a_j} pointwise. Therefore the periodized functions $F_M(\xi)$ and $G_M(\xi)$ defined in \eqref{eq:def-F-G-per} vanish pointwise on $Q_M$. Plugging back into \eqref{eq:f-per-QM} and \eqref{eq:FL-f-per-QM}, we obtain Theorem~\ref{thm:BL-nup}.
\end{proof}

\subsection{Density and uniformly separated conditions}\label{sec:5.2}
We first introduce in this subsection the upper Beurling density with respect to a measure.
\begin{definition}\label{def:dens-meas}
Let $\mu$ be a positive Borel measure on $\R$, and let $\Gamma\subset\R$ be discrete. For $r>0$
\begin{equation*}
    m(\Gamma,r) \defi \hch{\frac{\#(\Gamma\cap I)}{\mu(I)}\st I\subset\R \text{ an interval and } \mu(I)=r},
\end{equation*}
and we define the upper and lower Beurling densities of $\Gamma$ with respect to $\mu$, respectively, by
\begin{equation}\label{eq:def-meas-D}
    \D_\mu^+(\Gamma) = \limsup_{r\to\infty} \sup m(\Gamma,r), \qquad \D_\mu^-(\Gamma) = \liminf_{r\to\infty} \inf m(\Gamma,r).
\end{equation}
\end{definition}
We next relate \eqref{eq:dens-with-F} and Definition~\ref{def:dens-meas}, which agree in the following special case. If $\varphi\in C^1(\R)$ is an increasing\footnote{Throughout this work, increasing means strictly increasing.} function and $\dmu_\varphi(x)=\varphi'(x)\dx$, then for every interval $I=[a,b]\subset\R$ we have 
\begin{equation*}
    \mu_\varphi(I)=\int_a^b \dmu_\varphi(x)=\varphi(b)-\varphi(a)=|\varphi(I)|.
\end{equation*}
Moreover, strict monotonicity implies that $\varphi$ is injective and forms a bijection with its image, so that $ \#(\Gamma\cap I)=\#(\varphi(\Gamma)\cap \varphi(I))$. Consequently, by \eqref{eq:def-meas-D} we have
\begin{equation}\label{eq:D-are-equiv}
    \D_{\mu_\varphi}^+(\Gamma)= \limsup_{r\to\infty} \sup\hch{\frac{\#(\varphi(\Gamma)\cap J)}{|J|}\st J\subset \varphi(\R)\ \text{interval},\ |J|=r} = \D_\varphi^+(\Gamma),
\end{equation}
where $\D_{\varphi}^+$ was defined in \eqref{eq:dens-with-F}. The same holds for $\D_{\mu_\varphi}^-(\Gamma)$. In particular, if $\varphi(\R)=\R$, then it follows from \eqref{eq:D-are-equiv} that
\begin{equation}\label{eq:D-meas-func}
    \D_{\mu_\varphi}^\pm(\Gamma)= \D_\varphi^\pm(\Gamma)=\D^\pm(\varphi(\Gamma)),
\end{equation}
where $\D^\pm$ denote the classical Beurling densities, corresponding to the choice $F(x)=x$ in \eqref{eq:dens-with-F}.

As we shall see in the next subsection, Definition~\ref{def:dens-meas} has a natural meaning when the measure $\mu_\varphi$ is doubling. We therefore begin by recalling the notion of a doubling measure, and then introduce the uniform separation conditions used throughout this work.
\begin{definition}[Doubling measure]
A measure $\mu$ on the real line is called doubling if there exists a universal constant $C>0$ such that for every bounded interval $I \subset \mathbb{R}$ we have $\mu(2I) \leq C \mu(I)$, where $2I$ denotes the interval with the same center as $I$ and twice its radius.
\end{definition}
\begin{example}[{\cite[Ex.~7.1.6]{grafakosclassical}}]\label{ex:2.4}
For $\gamma>-1$ the measure $\dnu(x) = (1+|x|)^\gamma\dx$ is doubling on $\mathbb{R}$.
\end{example}
\begin{remark}\label{rem:2.5}
A well-known fact about doubling measures is that the doubling is preserved under pointwise comparability. More precisely, let $f, w : \mathbb{R} \to [0,\infty)$ be continuous, and define measures $\mu$ and $\nu$ by $\dmu = f\dx$ and $\dnu = w\dx$. If $f \asymp w$ and $\nu$ is doubling, then $\mu$ is doubling.
\end{remark}
We also define the notion of uniformly discrete sequences.
\begin{definition}\label{def:2.6}
Let $\Gamma \subset \R$ be a sequence. We call $\Gamma$ uniformly discrete if there exists a constant $\delta>0$ such that $|\gamma - \gamma^\prime| \ge \delta$ for all $\gamma,\gamma^\prime \in \Gamma$ with $\gamma\neq \gamma^\prime$.
\end{definition}

\begin{remark}\label{rem:2.7}
Let $\Gamma\subset\R$ be a sequence with $\D^{+}(\Gamma)<\infty$. Then $\Gamma$ is a finite union of uniformly discrete sequences. Indeed, $\D^{+}(\Gamma)<\infty$ implies the uniform local bound $\sup_{x\in\mathbb{R}}\#(\Gamma\cap [x,x+1])<\infty$, and from this one obtains a decomposition of $\Gamma$. More precisely, there exists $N\in\N$ and sequences 
$\Gamma_1, \dots, \Gamma_N \subset \Gamma$ such that
\begin{equation*}
  \Gamma = \Gamma_1 \cup \dots \cup \Gamma_N, \qquad \Gamma_i \cap \Gamma_j = \emptyset \quad \text{for } i \neq j,
\end{equation*}
and there exists $\delta_i>0$ such that the condition in Definition~\ref{def:2.6} holds for $\Gamma_i$.
\end{remark}

\subsection{de~Branges and model spaces}\label{sec:5.3}
We begin by recalling the basic structure of de~Branges spaces. A function $E$ is called Hermite--Biehler if $E$ is entire, which means $E \in \mathscr{H}(\C)$, and 
\begin{equation*}
    |E^{\ast}(z)| < |E(z)| \quad \text{for } z\in \C_+,
\end{equation*}
where $E^{\ast}(z) \defi \overline{E(\bar z)}$. If $E$ is Hermite--Biehler, the corresponding de~Branges space is defined by
\begin{equation*}
    \H(E)\defi \hch{F \in \mathscr{H}(\C) \st F/E\in H^2(\C_+) \text{ and } F^{\ast}/E\in H^2(\C_+)},
\end{equation*}
equipped with the norm
\begin{equation*}
    \hn{F}_{\H(E)}^2 = \int_{\R}|F(x)|^2|E(x)|^{-2}\dx.
\end{equation*}
It is a standard theorem of de~Branges that $\mathcal{H}(E)$ is a reproducing kernel Hilbert space \cite[Theorem~19]{deB}. These spaces can be viewed as a generalization of the Paley--Wiener spaces, which arise as de~Branges spaces corresponding to $E(z)=e^{-iaz}$, $a>0$.

The next result, due to Marzo, Nitzan, and Olsen \cite{Marzo2012}, provides a sufficient condition for a real sequence to be interpolating in a de~Branges space under a doubling assumption on the phase measure. Before stating it, we recall the relevant notions. Let $E$ be a Hermite--Biehler function with phase function $\varphi$, meaning that $\varphi$ is the smooth real increasing function such that 
\begin{equation*}
    E(x)=|E(x)|\exp\hp{-i\varphi(x)} \quad \text{ for } x \in \R.
\end{equation*}

A real sequence $\Gamma \subset \mathbb{R}$ is said to be \emph{uniformly $\varphi$-discrete} if $\varphi(\Gamma)$ is uniformly discrete in the sense of Definition~\ref{def:2.6}, and its upper $\varphi$-density is given by \eqref{eq:D-are-equiv}.

\begin{theoremA}[{\cite[Theorem 1.5]{Marzo2012}}]\label{thm:A}
Let $E$ be a Hermite--Biehler function and $\H(E)$ the associated de~Branges space with phase function $\varphi$. For $\omega \in \C$, denote the reproducing kernel of $\H(E)$ at $\omega$ by $k_\omega(\cdot)$. Suppose that $\varphi'(x)\dx$ is a doubling measure on $\mathbb{R}$ and let $\Gamma\subset\R$ be a real sequence. 

If $\Gamma$ is uniformly $\varphi$-discrete and $\D^+_{\varphi}(\Gamma) < \pi^{-1}$, then $\Gamma$ is interpolating for $\H(E)$. More precisely, for every sequence $\{a_\gamma\}_{\gamma\in\Gamma}$ satisfying
\begin{equation*}
    \sum_{\gamma \in \Gamma} \frac{|a_\gamma|^2 }{\hn{k_\gamma}^2} < \infty,
\end{equation*}
there exists a function $g \in \H(E)$ such that $g(\gamma) = a_\gamma$ for every $\gamma\in\Gamma$.
\end{theoremA}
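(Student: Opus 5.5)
The plan is to reduce interpolation in $\H(E)$ to an $L^2$ problem for the normalized kernels. Write $\Theta=E^\ast/E$, which is a meromorphic inner function in $\C_+$, so that $F\mapsto F/E$ maps $\H(E)$ unitarily onto the model space $K^2_\Theta$. Recall $\|k_x\|^2=\pi^{-1}\varphi'(x)|E(x)|^2$ for real $x$. Interpolation of all data with $\sum|a_\gamma|^2/\|k_\gamma\|^2<\infty$ is then equivalent, by the standard duality argument, to the normalized kernels $\{k_\gamma/\|k_\gamma\|\}_{\gamma\in\Gamma}$ forming a Riesz sequence. That is, I would aim to prove
\begin{equation*}
    \sum_{\gamma\in\Gamma}|c_\gamma|^2 \lesssim \hn{\sum_{\gamma\in\Gamma}c_\gamma\frac{k_\gamma}{\|k_\gamma\|}}^2_{\H(E)} \lesssim \sum_{\gamma\in\Gamma}|c_\gamma|^2
\end{equation*}
for finitely supported $\{c_\gamma\}$.

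\emph{Upper (Bessel) bound.} I would first show that uniform $\varphi$-discreteness plus doubling of $\varphi'\dx$ makes $\sum_\gamma \varphi'(\gamma)^{-1}\delta_\gamma$ a Carleson-type measure for $\H(E)$, giving $\sum_\gamma |F(\gamma)|^2/\|k_\gamma\|^2\lesssim\|F\|^2$. The key input is a local Plancherel--Pólya estimate. On a $\varphi$-interval of unit length, i.e.\ a real interval $I$ of Euclidean length $\asymp 1/\varphi'$, the function $F/E$ behaves like a Paley--Wiener function at scale $|I|$. Doubling is exactly what lets me pass from the phase to a disc in $\C$ of comparable radius on which $|E|$ and $|E^\ast|$ are controlled. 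The estimate is then a subharmonicity bound $|F(\gamma)/E(\gamma)|^2\lesssim |I|^{-1}\int_{\tilde I}|F/E|^2$, summed using the bounded overlap of the $\tilde I$.

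\emph{Lower bound / construction.} For the interpolation itself I would follow the Beurling--Seip strategy adapted to the phase. The idea is to use $\D^+_\varphi(\Gamma)<\pi^{-1}$ to construct an entire function $G$ vanishing exactly on $\Gamma$ with
\begin{equation*}
    |G(x)|\asymp |E(x)|\, e^{-\varepsilon \varphi(x)}\,\frac{\dist_\varphi(x,\Gamma)}{1+\dist_\varphi(x,\Gamma)}
\end{equation*}
in a suitably regularized sense, with a spare factor coming from the density gap. Concretely, I would choose a slightly denser comparison sequence with $\varphi$-density $\pi^{-1}$ and compare a product against $\Gamma$. The interpolating function is then the Lagrange-type series
\begin{equation*}
    g=\sum_\gamma a_\gamma \frac{G(z)}{G'(\gamma)(z-\gamma)},
\end{equation*}
and its $\H(E)$-norm is bounded via the Bessel estimate together with a discrete Hilbert-transform bound in the doubling measure $\varphi'\dx$. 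If a clean product is unavailable, my fallback is to obtain $G$ by solving a weighted $\bar\partial$-problem (Hörmander) with weight $\log|E|$ plus a correction.

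I expect the main obstacle to be this construction of $G$ with two-sided control: turning the global density gap into pointwise size estimates uniformly along $\R$. This is where doubling is essential, since it makes the phase geometry locally comparable to Euclidean geometry at every scale and lets Paley--Wiener estimates be transplanted locally.
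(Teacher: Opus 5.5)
The paper gives no proof of Theorem~A. It is quoted from Marzo--Nitzan--Olsen and only applied, after the hypotheses are checked: doubling from $(\mathsf P_1)$, uniform $\varphi$-discreteness from Remark~\ref{rem:2.7}, and the density bound from Claim~\ref{claim:1}. So your sketch has to stand on its own.

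\textbf{What is sound.} Your reduction is correct. Since $\|k_x\|^2=\pi^{-1}\varphi'(x)|E(x)|^2$, interpolation of all admissible data is equivalent, given a Bessel bound, to the normalized kernels forming a Riesz sequence.

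\textbf{The Bessel half.} It is right in spirit, but the inequality you write is not. Subharmonicity gives an area mean over a disc of radius $\asymp 1/\varphi'(\gamma)$ that straddles $\mathbb{R}$, not a line integral over $\tilde I$. A purely local line bound for point evaluation already fails in Paley--Wiener space. The fix has three steps:
\begin{itemize}
\item On the lower half-disc, write $f=F/E$ as $f(z)=\Theta(z)\overline{g(\bar z)}$, where $g=\Theta\bar f=F^\ast/E\in H^2(\mathbb{C}_+)$.
\item Doubling keeps the zeros of $\Theta$ at distance $\gtrsim 1/\varphi'(\gamma)$ from $\gamma$, so $|\Theta|\gtrsim 1$ on the upper half-disc $D^+_\gamma$. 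This bounds $|f|$ on the lower half-disc by $|g|$ at the reflected point.
\item Finally, $\sum_\gamma \varphi'(\gamma)\mathbf{1}_{D^+_\gamma}\,\mathrm{d}A$ is a Carleson measure for $H^2(\mathbb{C}_+)$, which gives the Bessel bound.
\end{itemize}

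\textbf{The genuine gap: the lower Riesz bound.} This is the whole content of Theorem~A, and you leave it as an announced obstacle with a fallback. The plan you outline for it also cannot work as stated, for four reasons.
\begin{itemize}
\item \emph{The density gain cannot be a pointwise factor on $\mathbb{R}$.} The factor $e^{-\varepsilon\varphi(x)}$ blows up as $x\to-\infty$, because the phase maps onto $\mathbb{R}$. The symmetric version $e^{-\varepsilon|\varphi(x)|}$ is impossible already for $E(z)=e^{-i\pi z}$. There $G(z)/(z-\gamma)\in\mathcal{H}(E)$ puts $G$ in the Cartwright class, and a nonzero Cartwright function satisfies $\int_{\mathbb{R}}\log|G(x)|\,(1+x^2)^{-1}\,\mathrm{d}x>-\infty$. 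In the Paley--Wiener model the spare room from $\mathrm{D}^+_\varphi(\Gamma)<\pi^{-1}$ is a loss of exponential type, i.e.\ of growth off the axis, later traded for decay on $\mathbb{R}$ by an auxiliary multiplier. You give no de~Branges analogue of this.
\item \emph{Two-sided bounds fail for irregular $\Gamma$.} A function vanishing exactly on $\Gamma$ cannot obey uniform two-sided bounds of the displayed type when $\Gamma$ is finite, one-sided, or has long $\varphi$-gaps. So $\Gamma$ would first have to be completed to a regular sequence.
\item \emph{The Lagrange series bound is not free.} Even granting $G$, the norm bound for the Lagrange series does not follow from the Bessel estimate plus a discrete Hilbert transform bound. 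It is a weighted Hilbert-transform estimate with weight $|G/E|^2$ on $\mathbb{R}$, essentially a Muckenhoupt $A_2$ condition, and that would itself need proof.
\item \emph{The H\"ormander fallback controls the wrong norm.} It controls area integrals, not the boundary norm together with the conditions $F/E,\,F^\ast/E\in H^2(\mathbb{C}_+)$ that define $\mathcal{H}(E)$.
\end{itemize}
As it stands, the proposal sets up the right framework but does not prove the theorem.
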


We now introduce the meromorphic inner function, and hence the Hermite--Biehler function, to which Theorem~\hyperref[thm:A]{A} will be applied. Let $\{z_n\}_{n\ge 1}\subset \C_+$ be a sequence accumulating only at infinity and satisfying the Blaschke condition
\begin{equation}\label{eq:def-Blasc-cond}
    \sum_{n\ge 1}\frac{\IM (z_n)}{|z_n|^2}<\infty.
\end{equation}
For a fixed $\ell\in\N$, we define
\begin{equation}\label{eq:def-Theta-ell}
    \Theta_\ell(z)\defi \hbra{\prod_{n=1}^\infty \frac{1-z/z_n}{1-z/\overline{z_n}}}^\ell.
\end{equation}
Thus, $\Theta_\ell$ is the Blaschke product with zeros at the points $z_n$, each of multiplicity $\ell$. In particular, it follows from \cite[Problem~23]{deB} that $\Theta_{\ell}$ is an inner function, that is, $\Theta_{\ell}$ is bounded in the upper half-plane and $|\Theta_{\ell}|=1$ on $\R$. Hence, by the standard de~Branges correspondence for meromorphic inner functions \cite{Havin_Mashreghi_2003}, there exists a Hermite--Biehler entire function $E_{\ell}$, with no real zeros, such that
\begin{equation}\label{eq:Theta-ell-E}
    \Theta_{\ell}(z) = \frac{E_{\ell}^{\ast}(z)}{E_{\ell}(z)}.
\end{equation}
Let $\varphi_{\ell}$ denote the increasing phase function of $E_{\ell}$, so that $\Theta_{\ell}(x) = e^{i 2 \varphi_{\ell}(x)}$. Since the zeros in the factorization of $\Theta_{\ell}$ accumulate only at infinity, we may invoke the Poisson kernel representation for the arguments of meromorphic inner functions. Thus, we obtain\footnote{Equivalently, one may differentiate logarithmically the factorization of $\Theta_{\ell}$, using that the corresponding Blaschke product converges locally uniformly in a neighborhood of $\R$.}
\begin{equation}\label{eq:Der-Theta-ell}
    |\Theta_{\ell}'(x)| = 2\varphi_{\ell}'(x) = 2\ell S(x), \quad \text{where } S(x) \defi \sum_{n\geq1} \frac{\IM (z_n)}{|x-z_n|^2}.
\end{equation}

We now turn to the model space associated with the inner function $\Theta_{\ell}$ defined in \eqref{eq:def-Theta-ell}, namely
\begin{equation*}
    K^2_{\Theta_{\ell}} \defi H^2(\C_+) \ominus \Theta_{\ell} H^2(\C_+).
\end{equation*}
By \eqref{eq:Theta-ell-E}, the standard correspondence between de~Branges spaces and model spaces implies that division by $E_{\ell}$ defines an isometric isomorphism
\begin{equation}\label{eq:def-Iso-Isomet}
    \mathcal V_{\ell} : \mathcal H(E_{\ell}) \longrightarrow K^2_{\Theta_{\ell}}, \qquad \mathcal V_{\ell} F \defi \frac{F}{E_{\ell}}.
\end{equation}
In particular, since $E_{\ell}$ has no real zeros, $\V_{\ell}$ preserves real zeros. Furthermore, by the following result of Fricain and Mashreghi \cite{Fricain2008}, the derivatives of an element of $K^2_{\Theta_{\ell}}$ at a point $x_0 \in \mathbb{R}$ can be bounded pointwise in terms of the derivatives of $\Theta_{\ell}$, provided that $x_0$ is sufficiently regular for $\Theta_{\ell}$.
\begin{theoremB}[{\cite[Lemma~3.2 and Proposition~5.1]{Fricain2008}}]\label{thm:B}
Let $\Theta_{\ell}$ be the meromorphic inner function defined in \eqref{eq:def-Theta-ell}, let
$k\in \Z_+$, and let $x_0\in\R$ satisfying
\begin{equation}\label{eq:cond-derivatives-Theta}
    \sum_{n\ge1}\frac{\IM (z_n)}{|x_0-z_n|^{2k+2}}<\infty.
\end{equation}
Then there exist $\mathcal{K}^{\Theta_{\ell}}_{x_0,k} \in K^2_{\Theta_{\ell}}$, such that for every $f\in K^2_{\Theta_{\ell}}$, we have $f^{(k)}(x_0)=\langle f, \mathcal{K}^{\Theta_{\ell}}_{x_0,k}\rangle$ and 
\begin{equation*}
    |f^{(k)}(x_0)| \leq \|f\|_{K^2_{\Theta_{\ell}}} \hp{ \frac{(k!)^{2}}{2\pi} \sum_{j=0}^{k} \frac{|\Theta_{\ell}^{(j)}(x_0)|}{j!} \frac{|\Theta_{\ell}^{(2k+1-j)}(x_0)|}{(2k+1-j)!}}^{1/2}.
\end{equation*}
\end{theoremB}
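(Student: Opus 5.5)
Since $\Theta_\ell$ is a meromorphic inner function, every element of $K^2_{\Theta_\ell}$ extends analytically across $\R$. So the plan is to represent $f^{(k)}(x_0)$ by differentiating the reproducing kernel of $K^2_{\Theta_\ell}$ and then to compute the norm of the resulting kernel exactly.

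\emph{Step 1: the kernel and the reproducing identity.} Recall that for $w\in\C_+$ the reproducing kernel of $K^2_{\Theta_\ell}$ is
\begin{equation*}
    k_w(z)=\frac{i}{2\pi}\,\frac{1-\overline{\Theta_\ell(w)}\,\Theta_\ell(z)}{z-\overline{w}},
\end{equation*}
so that $f(w)=\langle f,k_w\rangle$ for $f\in K^2_{\Theta_\ell}$. The poles of $\Theta_\ell$ are the points $\overline{z_n}$, which lie in the lower half-plane and accumulate only at infinity. Hence $\Theta_\ell$ is analytic in a neighbourhood of $\R$, and the formula for $k_w$ makes sense for $w$ near $x_0$ (with $w\in\C_+$ or $w=x_0$), since the singularity at $z=\overline w$ is removable when $|\Theta_\ell(w)|=1$. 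I would then set
\begin{equation*}
    \mathcal K^{\Theta_\ell}_{x_0,k}\defi \partial_{\overline w}^{\,k}k_w\big|_{w=x_0},
\end{equation*}
and show that it belongs to $K^2_{\Theta_\ell}$.

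\emph{Step 2: membership in $K^2_{\Theta_\ell}$ and passage to the boundary.} This is where \eqref{eq:cond-derivatives-Theta} enters. Writing $\Theta_\ell$ as the Blaschke product \eqref{eq:def-Theta-ell}, the $L^2(\R)$ norm of $\partial_{\overline w}^{\,k}k_w$ is controlled, uniformly as $w\to x_0$ nontangentially, by sums of the form
\begin{equation*}
    \sum_{n\ge1}\frac{\IM(z_n)}{|x_0-z_n|^{2j+2}}, \qquad 0\le j\le k.
\end{equation*}
This is the Ahern--Clark mechanism. In the present setting these sums are in fact automatically finite by the Blaschke condition \eqref{eq:def-Blasc-cond}, since $|x_0-z_n|\asymp|z_n|$ for large $n$. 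Uniform $L^2$ boundedness together with pointwise convergence gives weak convergence $\partial_{\overline w}^{\,k}k_w\rightharpoonup\mathcal K^{\Theta_\ell}_{x_0,k}$. Differentiating $f(w)=\langle f,k_w\rangle$ $k$ times in $w$ then yields
\begin{equation*}
    f^{(k)}(w)=\langle f,\partial_{\overline w}^{\,k}k_w\rangle,
\end{equation*}
and letting $w\to x_0$ gives $f^{(k)}(x_0)=\langle f,\mathcal K^{\Theta_\ell}_{x_0,k}\rangle$, using the analyticity of $f$ near $x_0$.

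\emph{Step 3: the norm bound.} By Cauchy--Schwarz,
\begin{equation*}
    |f^{(k)}(x_0)|\le\|f\|_{K^2_{\Theta_\ell}}\,\|\mathcal K^{\Theta_\ell}_{x_0,k}\|.
\end{equation*}
By the reproducing property,
\begin{equation*}
    \|\mathcal K^{\Theta_\ell}_{x_0,k}\|^2=\partial_z^k\partial_{\overline w}^{\,k}k_w(z)\big|_{z=w=x_0}.
\end{equation*}
To evaluate this, I would set $z=x_0+h$ and $w=x_0+\overline u$, and expand the numerator $1-\overline{\Theta_\ell(w)}\Theta_\ell(z)$ in Taylor series. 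Since $|\Theta_\ell|=1$ on $\R$, the numerator vanishes on the diagonal, so it is divisible by $z-\overline w=h-u$. Extracting the coefficient of $h^k u^k$ in the quotient then produces exactly
\begin{equation*}
    \frac{(k!)^2}{2\pi}\sum_{j=0}^{k}\frac{\Theta_\ell^{(j)}(x_0)}{j!}\cdot\frac{\overline{\Theta_\ell^{(2k+1-j)}(x_0)}}{(2k+1-j)!},
\end{equation*}
up to unimodular factors. Applying the triangle inequality to this sum gives the stated bound.

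I expect the main obstacle to be Step 3, namely this bookkeeping of the Taylor coefficients, including verifying that the divided-difference structure collapses to a single sum over $j$. Step 2 is routine here precisely because the zeros of $\Theta_\ell$ stay away from $\R$.
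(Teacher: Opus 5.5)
Your argument is correct, but there is no proof in the paper to match it against. The paper does not prove Theorem~B; it imports it from Fricain--Mashreghi. There the derivative kernel and the norm bound are obtained in a general Ahern--Clark-type setting, for arbitrary inner functions and in fact de~Branges--Rovnyak spaces. That setting includes boundary points where $\Theta$ need not continue analytically, which is why \eqref{eq:cond-derivatives-Theta} is a genuine hypothesis there.

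You instead exploit that $\Theta_\ell$ is meromorphic. The single fact doing the work is $\Theta_\ell^*\Theta_\ell\equiv 1$, which follows from $|\Theta_\ell|=1$ on $\R$ and the identity theorem. It gives
\begin{equation*}
k_w(z)=-\frac{i}{2\pi}\,\Theta_\ell^*(\overline w)\,\frac{\Theta_\ell(z)-\Theta_\ell(\overline w)}{z-\overline w},
\end{equation*}
so $k_w(z)$ is jointly analytic in $(z,\overline w)$ near $(x_0,x_0)$. This identity is also what justifies divisibility by $h-u$ in your Step~3: you need vanishing on the complex diagonal, not only the real one.

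With this, Step~2 needs no Ahern--Clark input. On $\R$ the tail of $\partial_{\overline w}^k k_w$ is $O(|z-x_0|^{-1})$ uniformly for $w$ near $x_0$, so $\partial_{\overline w}^k k_w\to\mathcal K^{\Theta_\ell}_{x_0,k}$ in norm by dominated convergence. Your remark that \eqref{eq:cond-derivatives-Theta} follows directly from the Blaschke condition is right. It makes that part of $(\mathsf P_2)$, which the appendix verifies at length, automatic.

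The same factorization removes what you call the main obstacle. Put $a_m=\Theta_\ell^{(m)}(x_0)/m!$. Then
\begin{equation*}
\frac{\Theta_\ell(x_0+h)-\Theta_\ell(x_0+u)}{h-u}=\sum_{p,q\ge0}a_{p+q+1}h^pu^q .
\end{equation*}
Multiplying by $\Theta_\ell^*(x_0+u)=\sum_b\overline{a_b}u^b$, the coefficient of $h^ku^k$ is $\sum_{j=0}^k\overline{a_j}\,a_{2k+1-j}$. This gives the exact identity $\|\mathcal K^{\Theta_\ell}_{x_0,k}\|^2=-\frac{i(k!)^2}{2\pi}\sum_{j=0}^k\overline{a_j}\,a_{2k+1-j}$; for $k=0$ it reduces to $|\Theta_\ell'(x_0)|/(2\pi)$. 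The stated bound then follows by the triangle inequality.

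On the trade-off: your proof is elementary and yields an equality rather than an inequality, but it uses analyticity of $\Theta_\ell$ across $\R$ essentially. The cited result is heavier but covers singular boundary behaviour. For the paper's application, your version suffices.
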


\subsection[Proofs of Theorem~NUP and Corollary~Hilbert Transform (ii)]{Proofs of Theorem~\ref{thm:debranges-nup} and Corollary~\ref{cor:HT}~(ii)}\label{sec:5.4}

The main goal is to prove Theorem~\ref{thm:debranges-nup} and the strategy is to combine the two theorems from Subsection~\ref{sec:5.3} to construct a function with the prescribed zero set $\Lambda \cup M$. First, by Theorem~\hyperref[thm:A]{A}, we construct a function in a de~Branges space $\mathcal H(E)$ and the density of this zero set is related to the phase function of $E$. This function, with its prescribed zeros, is then transferred to $K^2_{\Theta_{\ell}}$ using the isomorphism that preserves zeros.

The point of working in $K^2_{\Theta_{\ell}}$ is that Theorem~\hyperref[thm:B]{B} yields pointwise derivative estimates in terms of the corresponding inner function, and that $K^2_{\Theta_{\ell}}$ is a closed subspace of $H^2(\C_+)$. This Hardy space becomes special in this context because if $f\in H^2(\C_+)\cap \Sp(\R)$, we have $\supp \widehat{f^{(k)}} \subset \R_+$ for every $k\in \Z_+$ and this implies, as above, that $(-\Delta)^{\frac{1}{2}}f(x) = \frac{1}{2\pi i} f^\prime(x)$ pointwise. In particular, constructing such a function with double zeros at $\Lambda \cup M$ and the appropriate decay yields the conclusion.

\begin{proof}[Proof of Theorem~\ref{thm:debranges-nup}]
Fix\footnote{Once $\alpha$ is fixed, we allow the relations $\asymp$ and $\lesssim$ to depend on $\alpha$, but omit this dependence to simplify the notation.} $0<\alpha<1$, and set $\beta = \alpha^{-1}$. The first step is to construct a sequence $\{z_n\} \subset \C_+$, depending on $\alpha$, such that the Blaschke product \eqref{eq:def-Theta-ell} defines, for every $\ell \in \N$, an inner function $\Theta_{\ell}$. Let $E_{\ell}$ denote the associated Hermite--Biehler function with phase $\varphi_{\alpha,\ell}$. The sequence $\{z_n\}$ will be chosen so that the phase functions satisfy the following properties:
\begin{enumerate}
\item[$(\mathsf P_1)$]\label{item:P1} $\varphi_{\alpha,\ell}'(x) \asymp \ell (1+|x|)^{\beta-1}$;
\item[$(\mathsf P_2)$]\label{item:P2} For every $k\in \N$ and $x\in \R$ \eqref{eq:cond-derivatives-Theta} holds, and $|\varphi_{\alpha,\ell}^{(k)}(x)| \lesssim_{k} \ell (1+|x|)^{k\hp{\beta-1}}$.
\end{enumerate}
We shall refer to the above conditions as property $(\mathsf{P})$. For the moment, we assume the existence of a family of phase functions $\varphi_{\alpha,\ell}$ satisfying $(\mathsf{P})$, which will be constructed in Appendix~\ref{app:A}.

Fix sequences $\Lambda,M \subset \mathbb{R}$ with $\D^+_{G_\alpha}(\Lambda)$ and $\D^+_{G_\alpha}(M)$ both finite, where $G_{\alpha}$ is defined in \eqref{eq:def-G-Phi}. Since $G_{\alpha}(\R)=\R$, we can apply the formula in \eqref{eq:D-meas-func}. In particular, consider
\begin{equation}\label{eq:densi-G-alpha-two}
    C_0=\max\hch{\D^{+}(G_{\alpha}(\Lambda)), \D^{+}(G_{\alpha}(M))}<\infty.
\end{equation}

\begin{claim}\label{claim:1}
Let $\Gamma$ denote either $\Lambda$ or $M$. If $\varphi_{\alpha, \ell}$ satisfies \hyperref[item:P1]{$(\mathsf{P}_1)$}, then there exists a constant $C>0$, depending only on $\alpha$, such that
\begin{equation*}
    \D^+_{\varphi_{\alpha, \ell}}(\Gamma)\le \frac{CC_0}{\ell}.
\end{equation*}
\end{claim}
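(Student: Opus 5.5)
The plan is to compare the two measures $\dmu_{\varphi_{\alpha,\ell}}=\varphi_{\alpha,\ell}'(x)\dx$ and $\dmu_{G_\alpha}=G_\alpha'(x)\dx=\beta|x|^{\beta-1}\dx$ (recall $\beta=\alpha^{-1}>1$). Then we use the identification \eqref{eq:D-are-equiv} of $\D^+_\varphi$ with the upper Beurling density with respect to $\mu_\varphi$ from Definition~\ref{def:dens-meas}. Both $\varphi_{\alpha,\ell}$ and $G_\alpha$ are $C^1$ and increasing ($G_\alpha$ is $C^1$ because $\beta>1$). Hence
\begin{equation*}
\D^+_{\varphi_{\alpha,\ell}}(\Gamma)=\D^+_{\mu_{\varphi_{\alpha,\ell}}}(\Gamma), \qquad C_0\ge \D^+_{G_\alpha}(\Gamma)=\D^+_{\mu_{G_\alpha}}(\Gamma).
\end{equation*}
The claim thus reduces to an inequality between upper densities with respect to two measures whose densities are comparable up to the factor $\ell$ at infinity.

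The first step is to note that for $|x|\ge 1$ we have $(1+|x|)^{\beta-1}\asymp |x|^{\beta-1}$. By \hyperref[item:P1]{$(\mathsf P_1)$}, it follows that $\varphi_{\alpha,\ell}'(x)\ge c\,\ell\, G_\alpha'(x)$ on $|x|\ge1$, with $c$ depending only on $\alpha$. Near the origin $G_\alpha'$ vanishes, while $\varphi'_{\alpha,\ell}\gtrsim\ell$. So the lower bound $\varphi_{\alpha,\ell}'\ge c\ell G_\alpha'$ in fact holds on all of $\R$: on $[-1,1]$ we have $G_\alpha'\le\beta$, and $\varphi'_{\alpha,\ell}\gtrsim \ell$. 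Integrating over any interval $I$ gives
\begin{equation*}
\mu_{\varphi_{\alpha,\ell}}(I)\ge c\,\ell\,\mu_{G_\alpha}(I).
\end{equation*}

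The second step, which is the only delicate point, is passing from this measure inequality to a density inequality. The issue is that the intervals in the two suprema are normalized differently: one uses $\mu_{\varphi_{\alpha,\ell}}(I)=r$, the other $\mu_{G_\alpha}(I)=r'$. Fix $\varepsilon>0$ and let $r$ be large. Take an interval $I$ with $\mu_{\varphi_{\alpha,\ell}}(I)=r$. Then $r'\defi\mu_{G_\alpha}(I)\le r/(c\ell)$, but $r'$ could be small, so the $G_\alpha$-density bound cannot be applied to $I$ directly.

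To handle this, enlarge $I$ to an interval $J\supset I$ with $\mu_{G_\alpha}(J)=\max\{r',r/(c\ell)\}=r/(c\ell)$. This is possible by continuity, since $\mu_{G_\alpha}$ of a growing interval tends to infinity. Because $r/(c\ell)\to\infty$ as $r\to\infty$, the definition of $C_0$ gives, for $r$ large,
\begin{equation*}
\#(\Gamma\cap I)\le\#(\Gamma\cap J)\le (C_0+\varepsilon)\,\frac{r}{c\ell}.
\end{equation*}
Dividing by $\mu_{\varphi_{\alpha,\ell}}(I)=r$ yields $\sup m(\Gamma,r)\le (C_0+\varepsilon)/(c\ell)$ for all large $r$. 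Letting $\varepsilon\to0$ proves the claim with $C=c^{-1}$, which depends only on $\alpha$ through the constants in \hyperref[item:P1]{$(\mathsf P_1)$}.

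I expect the main obstacle to be ensuring that the implicit constants in \hyperref[item:P1]{$(\mathsf P_1)$} are independent of $\ell$. This must be taken from the Appendix construction, where $\varphi_{\alpha,\ell}=\ell\varphi_{\alpha,1}$ by \eqref{eq:Der-Theta-ell}, which makes the independence automatic. The enlargement trick handles the mismatch of normalizations, and only the lower bound of \hyperref[item:P1]{$(\mathsf P_1)$} is actually needed.
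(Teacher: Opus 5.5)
Your argument is correct, and it reaches the claim by a somewhat different and more economical route than the paper.

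\textbf{The paper's route.} The paper introduces the auxiliary map $H_\alpha(x)=\sgn(x)((1+|x|)^\beta-1)$ and uses both sides of $(\mathsf P_1)$ to get $|\varphi_{\alpha,\ell}(I)|\asymp \ell|H_\alpha(I)|$. It then proves the two-sided comparison $|H_\alpha(I)|\le C_2|G_\alpha(I)|+C_3$ (and symmetrically), where the additive constant comes from the piece of $I$ in $[-1,1]$. Finally, for $|J|=r$ and $K=G_\alpha(\varphi_{\alpha,\ell}^{-1}(J))$, it uses the lower half of $(\mathsf P_1)$ for the upper bound $|K|\lesssim r/\ell+C_3$. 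It needs the upper half of $(\mathsf P_1)$ only to get $|K|\gtrsim r/\ell-C$, so that $|K|\to\infty$ and the $G_\alpha$-density bound applies to $K$.

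\textbf{Your route.} You note that $|x|^{\beta-1}\le(1+|x|)^{\beta-1}$ on all of $\R$. So the lower half of $(\mathsf P_1)$ alone gives $\varphi_{\alpha,\ell}'\ge c\,\ell\, G_\alpha'$ globally, with no additive constant. You then replace the lower bound on $|K|$ by enlarging the interval to $\mu_{G_\alpha}$-length exactly $r/(c\ell)$. This is legitimate because the count $\#(\Gamma\cap\cdot)$ is monotone under inclusion and $G_\alpha(\R)=\R$.

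\textbf{What each buys.} You use only half of $(\mathsf P_1)$ and avoid $H_\alpha$ entirely. The paper's comparison is heavier but symmetric, $|K|\asymp r/\ell$ up to additive constants. It would therefore also yield the reverse estimate for lower densities, $\D^-_{\varphi_{\alpha,\ell}}(\Gamma)\gtrsim \D^-_{G_\alpha}(\Gamma)/\ell$, which is not needed here and which your one-sided argument does not give.

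Your remark that the $\ell$-independence of the constants follows from $\varphi_{\alpha,\ell}'=\ell S$ in \eqref{eq:Der-Theta-ell} is also right. The paper simply builds this into the statement of $(\mathsf P_1)$.
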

\begin{proof}[Proof of Claim \ref{claim:1}]
Define $H_\alpha(x)=\sgn(x)((1+|x|)^\beta-1)$. Then $H_\alpha$ is increasing and for $x \in \R$
\begin{equation}\label{eq:der-H-line}
    H_\alpha'(x)=\beta(1+|x|)^{\beta-1}.
\end{equation}
We already know by \hyperref[item:P1]{$(\mathsf{P}_1)$} that $\varphi_{\alpha,\ell}(\R)=H_\alpha(\R)=\R$. Consequently, for $H_\alpha$ or $\varphi_{\alpha,\ell}$, the upper density may be written as \eqref{eq:D-meas-func}. 

Let $I=[a,b]\subset\R$ be an interval. Using \eqref{eq:der-H-line}, together with \hyperref[item:P1]{$(\mathsf{P}_1)$}, and integrating over $I$, we obtain
\begin{equation}\label{eq:H-phi-intervals}
    \ell\frac{c_1}{\beta}|H_\alpha(I)| \leq |\varphi_{\alpha, \ell}(I)|\leq \ell\frac{C_1}{\beta}|H_\alpha(I)|,
\end{equation}
for some constants $c_1,C_1>0$ independent of $\ell$. We will prove that there exist constants $C_2\ge 1$ and $C_3\ge 0$, depending only on $\alpha$, such that for every interval $I\subset\R$,
\begin{equation}\label{eq:H-G-intervals}
    |H_\alpha(I)|\leq C_2|G_\alpha(I)|+C_3, \qquad |G_\alpha(I)|\leq C_2|H_\alpha(I)|+C_3.
\end{equation}

To prove this, first consider an interval $J=[u,v]\subset [1,\infty)$.
Since $1\le \frac{1+x}{x}\le 2$ for $x\ge 1$, we have
\begin{equation*}
    2^{-|\beta-1|}x^{\beta-1}\leq (1+x)^{\beta-1}\leq 2^{|\beta-1|}x^{\beta-1}.
\end{equation*}
Multiplying by $\beta$ and integrating from $u$ to $v$, we obtain
\begin{equation*}
    2^{-|\beta-1|} |G_\alpha(J)| \leq |H_\alpha(J)|\leq 2^{|\beta-1|} |G_\alpha(J)|. 
\end{equation*}
By oddness of both $G_\alpha$ and $H_\alpha$, the same estimate holds for every interval $J\subset(-\infty,-1]$. Now let $I=[a,b]$ be arbitrary, and decompose it as
$I=I_-\cup I_0\cup I_+$, where
\begin{equation*}
    I_-=I\cap(-\infty,-1],\quad I_0=I\cap[-1,1],\quad I_+=I\cap[1,\infty).
\end{equation*}
Since $G_\alpha$ and $H_\alpha$ are increasing functions, the lengths of their images are additive over adjacent subintervals, with
\begin{equation*}
    |G_\alpha(I)|=|G_\alpha(I_-)|+|G_\alpha(I_0)|+|G_\alpha(I_+)|,\qquad |H_\alpha(I)|=|H_\alpha(I_-)|+|H_\alpha(I_0)|+|H_\alpha(I_+)|,
\end{equation*}
where empty pieces are omitted. On $I_-$ and $I_+$, we have the multiplicative comparison above. On the middle piece $I_0\subset[-1,1]$, both image lengths are uniformly bounded, because
\begin{equation*}
    |G_\alpha(I_0)|\le |G_\alpha([-1,1])|=2, \qquad |H_\alpha(I_0)|\le |H_\alpha([-1,1])|=2(2^\beta-1).  
\end{equation*}
Combining these estimates proves \eqref{eq:H-G-intervals}. 

Fix $\varepsilon>0$. Since $\D^+(G_\alpha (\Gamma))\leq C_0$ by \eqref{eq:densi-G-alpha-two}, it follows from the definition of $\limsup$ that there exists $R_\varepsilon>0$ such that every interval $L\subset\R$ with $|L|\ge R_\varepsilon$ satisfies
\begin{equation}\label{eq:ineq-dens-G-L}
    \#(G_\alpha(\Gamma)\cap L)\leq (C_0+\varepsilon)|L|.
\end{equation}
Now let $J\subset\R$ be an interval of length $|J|=r$, and define $I=\varphi_{\alpha,\ell}^{-1}(J)$. Since $\varphi_{\alpha,\ell}$ is increasing and onto $\R$, the set $I$ is an interval. Also, since $G_\alpha$ is increasing and onto $\R$, the set $K=G_\alpha(I)$ is also an interval. We estimate $|K|$ in terms of $r$ and $\ell$. From \eqref{eq:H-phi-intervals} and the second inequality in \eqref{eq:H-G-intervals}
\begin{equation}\label{eq:upp-bound-meas-K}
    |K|=|G_\alpha(I)|\le C_2|H_\alpha(I)|+C_3 \leq C_2\frac{\beta}{c_1}\frac{r}{\ell}+C_3.
\end{equation}
Similarly, using the first inequality in \eqref{eq:H-G-intervals}, we have $|H_\alpha(I)|\le C_2|K|+C_3$. Combining this with the upper bound in \eqref{eq:H-phi-intervals}, namely $r=|\varphi_{\alpha,\ell}(I)|\le \frac{C_1}{\beta}\ell |H_\alpha(I)|$, we obtain
\begin{equation}\label{eq:low-bound-meas-K}
    |K|\ge \frac{\beta}{C_2C_1}\frac{r}{\ell}-\frac{C_3}{C_2}.
\end{equation}

In particular, for each fixed $\ell$, \eqref{eq:low-bound-meas-K} shows that $|K|\to\infty$ as $r\to\infty$. Therefore, if $r$ is sufficiently large, then $|K|\ge R_\varepsilon$, and \eqref{eq:ineq-dens-G-L} applies to $K$. Next, because $G_\alpha$ is injective,
\begin{equation*}
    \#(\varphi_{\alpha,\ell}(\Gamma)\cap J) = \#(\Gamma\cap I) = \#(G_\alpha(\Gamma)\cap G_\alpha(I)) = \#(G_\alpha(\Gamma)\cap K).
\end{equation*}
Hence, for all sufficiently large $r$, $\#(\varphi_{\alpha,\ell}(\Gamma)\cap J)\le (C_0+\varepsilon)|K|$. Using \eqref{eq:upp-bound-meas-K} and $|J|=r$, we obtain
\begin{equation*}
    \frac{\#(\varphi_{\alpha,\ell}(\Gamma)\cap J)}{|J|} \leq (C_0+\varepsilon)\hp{C_2\frac{\beta}{c_1}\frac1\ell+\frac{C_3}{r}}.
\end{equation*}
Taking the supremum over all intervals $J\subset\R$ with $|J|=r$, and then taking $\limsup$, we have
\begin{equation*}
    \D_{\varphi_{\alpha,\ell}}^+(\Gamma) \le C_2\frac{\beta}{c_1}\frac{C_0+\varepsilon}{\ell}.
\end{equation*}
The conclusion is obtained in the limit as $\varepsilon \to 0$.
\end{proof}

Combining Claim~\ref{claim:1} with Remark~\ref{rem:2.7}, we obtain decompositions of $\varphi_{\alpha,\ell}(\Lambda)$ and $\varphi_{\alpha,\ell}(M)$ into finitely many pairwise disjoint uniformly discrete subsequences. Thus, there exist
\begin{equation*}
    \varphi_{\alpha,\ell}(\Lambda) = \varphi_{\alpha,\ell}(\Gamma_1) \cup \dots \cup \varphi_{\alpha,\ell}(\Gamma_N), \quad \varphi_{\alpha,\ell}(M) = \varphi_{\alpha,\ell}(\Gamma_{N+1}) \cup \dots \cup \varphi_{\alpha,\ell}(\Gamma_L),
\end{equation*}
with each union disjoint, and such that each $\varphi_{\alpha,\ell}(\Gamma_i)$ is uniformly discrete.

Let $\lambda' \in \mathbb{R} \setminus (\Lambda \cup M)$, and for each $j \in \{1,\dots,L\}$ define $\Gamma_j' = \Gamma_j \cup \{\lambda'\}$. Choose $j \in \{1,\dots,L\}$. By construction, $\Gamma_j'$ is uniformly $\varphi_{\alpha,\ell}$-discrete. Moreover, since 
\begin{equation*}
    \varphi_{\alpha,\ell}(\Gamma_j') \subset \varphi_{\alpha,\ell}(\Gamma)\cup \varphi_{\alpha,\ell}(\lambda'),
\end{equation*}
where $\Gamma$ denotes either $\Lambda$ or $M$ depending on $j$, it follows that
\begin{equation*}
    \D^+(\varphi_{\alpha, \ell}(\Gamma_j^\prime))\leq \D^+(\varphi_{\alpha, \ell}(\Gamma)).
\end{equation*}
Using Claim~\ref{claim:1}, there exists $\ell_0$ independent of $j$ such that for all $\ell \ge \ell_0$ we have $\D_{\varphi_{\alpha,\ell}}^+(\Gamma) < \pi^{-1}$. 
Consequently, for $\ell\geq \ell_0$
\begin{equation*}
    \D_{\varphi_{\alpha,\ell}}^+(\Gamma_j') < \pi^{-1}.
\end{equation*}
Furthermore, the measure $\varphi_{\alpha, \ell}^\prime(x)\dx$ is doubling. Indeed, since we already know that power weights are doubling by Example~\ref{ex:2.4}, the doubling property of $\varphi_{\alpha, \ell}^\prime(x)\dx$ follows from \hyperref[item:P1]{$(\mathsf{P}_1)$} and Remark~\ref{rem:2.5}.

We are now in a position to apply Theorem~\hyperref[thm:A]{A}, which we invoke for the sequence $\hch{a_{j,\gamma}}_{\gamma\in \Gamma_j'}$
\begin{equation*}
    a_{j,\gamma} 
    =\begin{cases}
    1, \text{ if } \gamma=\lambda^\prime, \\
    0, \text{ if } \gamma\in \Gamma_j.
    \end{cases}
\end{equation*}
In particular, there exists $g_j\in \H(E_{\ell})$ such that $g_j(\lambda^\prime)=1$ and $g_j(\gamma)=0$ for $\gamma\in\Gamma_j$. Since $E_{\ell}$ has no real zeros, by \eqref{eq:def-Iso-Isomet} we have
\begin{equation}\label{eq:def-g_j}
    f_j \defi \mathcal V_{\ell}(g_j) \in K^2_{\Theta_{\ell}}
\end{equation}
also vanishes on $\Gamma_j$. By \hyperref[item:P2]{$(\mathsf{P}_2)$}, we can apply Theorem~\hyperref[thm:B]{B} for every $k\in \Z_+$. In particular, for $f_j$ defined in \eqref{eq:def-g_j} we have
\begin{equation}\label{eq:bound-f_j}
|f_j^{(k)}(x)|^2 \lesssim_k \|f_j\|_{K^2_{\Theta_{\ell}}}^2 \sum_{r=0}^{k} \hmm{\Theta_{\ell}^{(r)}(x)} \hmm{\Theta_{\ell}^{(2k+1-r)}(x)}.
\end{equation}
Since $\Theta_{\ell}(x) = e^{2i\varphi_{\alpha,\ell}}$, for $n\in \hch{1,\dots,2k+1}$ we have $|\Theta_{\ell}^{(n)}(x)|=|P_n(\varphi_{\ell}'(x),\ldots,\varphi_{\ell}^{(n)}(x))|$ for $P_n$ a polynomial. Then there exists $c_n\in\N$ such that
\begin{equation*}
|P_n(y_1,\ldots,y_n)| \lesssim_k \hp{1+\max_{1\le j\le n}|y_j|}^{c_n}.
\end{equation*}
Using these bounds in \eqref{eq:bound-f_j}, together with \hyperref[item:P2]{$(\mathsf{P}_2)$}, we obtain for some $d_k, N_k\in\N$
\begin{equation}\label{eq:bound-derv-f_j}
    |f_j^{(k)}(x)| \lesssim_{j,k} (1+\ell)^{d_k}(1+|x|)^{N_k}.
\end{equation}
 
Let $r = \max\hch{p,q}$. Consider $0\neq \varphi$ as in Proposition~\ref{prop:phi-psi}, with $\widehat{\varphi} \in G^{1/r}(U)$, except that on the Fourier side we take $U = \hch{\xi\in \R \st 1<\xi<2}$. Define the nonzero function
\begin{equation*}
    h(x) = \varphi(x)\prod_{j=1}^L(f_j(x))^2.
\end{equation*}
Using that every $f_j$ and $\varphi$ belong to $\mathscr{H}(\R)$, the analyticity of $h$ in $\R$ follows. Since $(f_j)^2$ vanishes to order two on $\Gamma_j$, both $h$ and $h^\prime$ vanish on $M\cup \Lambda$. Moreover, because 
\begin{equation*}
   f_j\in K^2_{\Theta_{\ell}}\subset H^2(\C_+), \qquad \supp \widehat{\varphi}\subset \R_+,  
\end{equation*}
we see that $\supp \widehat{h}\subset \R_+$, with $(-\Delta)^{1/2}h(x) = \tfrac{1}{2 \pi i} h^\prime (x)$. By \eqref{eq:bound-derv-f_j} and the subexponential decay of $\varphi$ and its derivatives, we have that $h\in\mathcal{S}_{1/2}^{r,r}\subset \mathcal{S}_{1/2}^{p,q}.$
\end{proof}
Next, we prove the non-uniqueness part of Corollary~\ref{cor:HT}.
\begin{proof}[Proof of Corollary~\ref{cor:HT}~{\hyperref[item:2-HT]{$(ii)$}}]
Fix sequences $\Lambda, M \subset \mathbb{R}$ such that $\D^+_{G_\alpha}(\Lambda)$ and $\D^+_{G_\alpha}(M)$ are finite, and set $r = \max\hch{p,q}$. Let $h \in \Sp_{1/2}^{r,r}(\R)$ be the nonzero function defined in the proof of Theorem~\ref{thm:debranges-nup}. By construction $\supp \widehat{h} \subset \R_+$, which implies
\begin{equation*}
    {\bf H}(h)(x) = -i h(x).
\end{equation*}
Since $h$ is real analytic, vanishes at $\Lambda \cup M$ and $|h(x)| \lesssim e^{-c|x|^r}$ for some $0 < c < 1$, we have $h \in \Sp_{\bf H}^{r,r}(\R)\subset \Sp_{\bf H}^{p,q}(\R)$.
\end{proof}

\section{Unique continuation revisited}\label{sec:6}
We can interpret Theorems \ref{thm:UP} and \ref{thm:nup-strong} within the boundary framework introduced in Subsection~\ref{sec:2.2}. Let $f\in \Sp(\R^d)$ and $u_f : \mathbb{R}^{d+1}_+ \to \mathbb{C}$ be the smooth function that weakly vanishes as $y\to \infty$ satisfying
\begin{equation*}
    \begin{cases}
        \mathrm{div}(y^{1-2s} \nabla u_f) = 0 &\text{ in } \mathbb{R}^{d+1}_+, \\ 
        u_f(\cdot,0) = f &\text{ on } \R^d.
    \end{cases}
\end{equation*}

It follows from \eqref{eq:frac-L-limit} that the vanishing of $(-\Delta)^s f$ on a set $\Gamma \subset\R^d$ is equivalent to the vanishing of the weighted normal derivative $\lim_{y\to0^+} y^{1-2s}\partial_y u_f(\gamma,y)$ for $\gamma\in\Gamma$. Therefore, our main results about uniqueness pairs admit the following unique continuation property on the boundary for such a degenerate elliptic equation.
\begin{corollary}\label{cor:UC-extension}
Let $0<\alpha,\rho,s<1$ and $c>0$. Consider $\Phi_{\alpha,c}$ as in \eqref{eq:def-G-Phi}, and set $\beta=\frac{1}{\alpha}$.
\begin{enumerate}
    \item If the trace of $u_f$ vanishes on $\Lambda$ and $\lim_{y\to0^+} y^{1-2s}\partial_y u_f(x,y)$ vanishes on $M$, where both $M, \Lambda \subset \R^d$ are discrete, with $\D_{\Phi_{1,c}}^{-}(\Lambda)$ and $\D_{\Phi_{\beta,c}}^{-}(M)$ positive, then $u_f\equiv0$ in $\R^{d+1}_+$.
    \item Given two $\rho$-separated sets $\Lambda, M\subset \R^d$, there exists a nontrivial solution $u_f$ whose trace vanishes on $\Lambda$ and whose weighted normal derivative vanishes on $M$.
\end{enumerate}
\end{corollary}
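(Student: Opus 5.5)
The plan is to translate both items into statements about $f$ and $(-\Delta)^s f$ via the Caffarelli--Silvestre correspondence, and then quote Theorem~\ref{thm:UP} and Theorem~\ref{thm:nup-strong}.

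The first step is a dictionary valid for $f\in\Sp(\R^d)$ and $0<s<1$. The trace $u_f(\cdot,0)$ equals $f$. By \eqref{eq:frac-L-limit},
\begin{equation*}
    \lim_{y\to0^+}y^{1-2s}\partial_yu_f(x,y)=-C_s^{-1}(-\Delta)^sf(x)
\end{equation*}
pointwise in $x$. Since $C_s\neq0$, the weighted normal derivative vanishes on $M$ if and only if $(-\Delta)^sf$ vanishes on $M$. I would also record that $u_f\equiv0$ if and only if $f\equiv0$: one direction is the trace, and the other follows from uniqueness of the decaying extension, for instance from the Poisson-kernel formula $\widehat{u_f}(\xi,y)=\widehat f(\xi)\,\theta(2\pi|\xi|y)$ for a fixed profile $\theta$.

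For item (i), the hypotheses become $f|_\Lambda=0$ and $(-\Delta)^sf|_M=0$, with $\D^-_{\Phi_{1,c}}(\Lambda)>0$ and $\D^-_{\Phi_{\beta,c}}(M)>0$. This is exactly Theorem~\ref{thm:UP}\hyperref[item:1-thmUP]{$(i)$} with $c_1=c_2=c$, which gives $f\equiv0$ and hence $u_f\equiv0$.

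For item (ii), I would apply Theorem~\ref{thm:nup-strong} with some $0<p,q<1$. It produces a nonzero $f\in\mathcal S^{p,q}_s(\R^d)\subset\Sp(\R^d)$ with $f|_\Lambda=0$ and $(-\Delta)^sf|_M=0$. The corresponding extension $u_f$ is nontrivial, because its trace is $f\neq0$. By the dictionary, its weighted normal derivative vanishes on $M$.

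The main obstacle is justifying \eqref{eq:frac-L-limit} pointwise, rather than in a weak sense, for every $x$ and all Schwartz $f$. I expect the standard Fourier representation of $u_f$ to handle this: for Schwartz $\widehat f$, the limit can be passed under the integral by dominated convergence. If needed, this is where I would add a brief justification.
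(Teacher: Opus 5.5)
Your proposal is correct and follows the same route as the paper: use \eqref{eq:frac-L-limit} (with $C_s\neq0$) to turn the trace and weighted-normal-derivative conditions into $f|_\Lambda=0$ and $(-\Delta)^sf|_M=0$, then apply Theorem~\ref{thm:UP}~(i) with $c_1=c_2=c$ for item (i) and Theorem~\ref{thm:nup-strong} for item (ii). The paper takes the pointwise validity of \eqref{eq:frac-L-limit} for Schwartz $f$ as known, so your dominated-convergence justification and your remark that $f\equiv0$ forces $u_f\equiv0$ are extra detail, not a different argument.
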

Under suitable regularity assumptions, these results can be viewed as a discrete version of the phenomena discussed in the \hyperref[sec:1]{Introduction} for harmonic functions in $\mathbb{R}^{d+1}_+$. For dimension $d=1$, it is known, see \cite{AE1}, that uniqueness holds whenever the underlying set has positive measure, while the Bourgain--Wolff construction \cite{BourgainWolff1990} provides higher-dimensional examples of sets with positive measure where uniqueness fails. In a discrete setting, what governs this behavior is not the measure but the asymptotic growth of the set at infinity: sparse sets that grow slowly, such as $\hch{\pm c \log(n)}_{n\in \N}$, still impose $u_f \equiv 0$, while sets with faster growth at infinity, such as $\hch{\pm c(\log(n))^\beta}_{n\in \N}$ for $\beta>1$, admit nontrivial solutions.

\section{Extensions to multiplier operators}\label{sec:7}
Let $m:\mathbb{R}^d \to \mathbb{C}$ be a continuous function that is polynomially bounded. Define the operator $T_m$ on the Schwartz class as
\begin{equation}\label{eq:def-T_m}
    \widehat{T_m(f)} = m \widehat{f}.
\end{equation}
We can formulate some of the preceding uniqueness and non-uniqueness results for these more general multipliers. The corresponding statements are given below, with the details following from straightforward modifications of the previous proofs.
\begin{remark}[Theorem~\ref{thm:UP}--revisited]\label{rem:7.1}
Suppose there exists a nonempty closed set $S\subset \C^d$ such that
\begin{enumerate}
    \item $m$ is real-analytic on $\R^d\setminus S$;
    \item writing $\tau = \inf \hch{|\IM(z)|\st z\in S}$ and $\Omega_c=\hch{z\in \C^d \st |\IM(z)|<c}$, for no $c>\tau$ does there exist a meromorphic function on $\Omega_c \setminus S$ whose germ at each point $x_0 \in \R^d \setminus S$ agrees with the germ of $m$ at $x_0$.
\end{enumerate}
Let $\Lambda, M\subset \R^d$ be discrete. If there exist $0<c_1,c_2<(2\pi\tau)^{-1}$ such that $\D^-_{\Phi_{1,c_1}}(\Lambda)>0$ and $\D^-_{\Phi_{1,c_2}}(M)>0$, then $(\Lambda, M)$ is a $\mathsf{UP}_{T_m}$ for $\Sp(\R^d)$.
\end{remark}

\begin{remark}[Theorem~\ref{thm:BL-nup}--revisited]\label{rem:7.2}
Let $M\in \GL_d(\R)$, $\Lambda^*=M^{-t}\Z^d$, and $Q_M=M^{-t}[0,1)^d$. Assume that there exist distinct points $v_1,v_2,v_3\in\Lambda^*$ and a nonempty open set
$O\Subset \operatorname{int}(Q_M)$ such that $m$ is $C^\infty$ in a neighborhood of $\bigcup_{j=1}^3(\overline O+v_j)$, 
and
\begin{equation*}
    m(\xi+v_2)-m(\xi+v_1)\neq 0 \quad \text{for every } \xi\in \overline O .
\end{equation*}
Then $(M\Z^d,M\Z^d)$ is a $\mathsf{NUP}_{T_m}$ for $\Sp(\R^d)$.
\end{remark}

\subsection{Examples}
We conclude with examples of multipliers that are covered by the previous remarks.

\subsubsection{Shifted fractional Laplacian}
Let $s,\lambda> 0$ and 
\begin{equation*}
    m(\xi) = (|\xi|^2 + \lambda)^s.
\end{equation*}
The operator in \eqref{eq:def-T_m} coincides with the fractional power of the shifted Laplacian $T_m = (-\Delta + \lambda)^s$. In this setting, Remark~\ref{rem:7.2} applies. In addition, if $s \in \R_{>0} \setminus \mathbb{N}$, then Remark~\ref{rem:7.1} also holds.

\subsubsection{Unimodular multipliers}
Let $\lambda\geq 0$, $s>0$, and consider the Fourier multiplier
\begin{equation*}
    m(\xi) = e^{i(|\xi|^2 + \lambda)^{s}}.
\end{equation*}
The case $s=\frac{1}{2}$, $\lambda=0$ corresponds to the time evolution of the wave equation, while $s=1$, $\lambda=0$ yields the free Schrödinger propagator, both at time $t=1$. The choice $s=\frac{1}{2}$ and $\lambda=1$ is associated with the solution of the Klein--Gordon equation, also at time $t=1$. Operators of this type have been studied with respect to their boundedness properties, see for instance \cite{Benyi2007}. In our context, if $s \in \R_{>0} \setminus \mathbb{N}$, then Remark~\ref{rem:7.1} applies.

\subsubsection{Mixed multipliers}
Let $s,\gamma > 0$, and
\begin{equation*}
    m(\xi) = \frac{e^{i|\xi|^{2s}}}{(1 + |\xi|^2)^{\gamma/2}}.
\end{equation*}
This class combines oscillatory behavior with polynomial decay and has appeared in different contexts, see \cite{Hirschman59}. In the present framework, Remark~\ref{rem:7.2} is applicable. Furthermore, when $s \in \R_{>0} \setminus \mathbb{N}$, Remark~\ref{rem:7.1} is also valid.

\appendix

\section{Proof of property \texorpdfstring{$(\mathsf{P})$}{(P)}}\label{app:A}
The following proposition is based on a construction due to Baranov, Borichev, and Havin \cite[Lemma~4.2]{BaranovHavin}. In that work, the case $\frac{1}{2} < \alpha < 1$ is treated in connection with a different problem. For completeness, we provide a proof for $0 < \alpha < 1$. Moreover, throughout this appendix, the relations $\asymp$ and $\lesssim$ are allowed to depend on $\alpha$, and this dependence is omitted from the notation.
\begin{proposition}
Let $0<\alpha <1$, $\ell\in \N$, and for $n\in \N$ define
\begin{equation}\label{eq:A.1}
x_n=n^\alpha, 
\qquad
y_n= \begin{cases}
        n^{\alpha-1}, & 0<\alpha\leq \frac12,\\
        1, & \frac12<\alpha<1,
    \end{cases}
\end{equation}

\begin{equation*}
    z_{2n-1}\defi x_n+iy_n,\qquad z_{2n}\defi-x_n+iy_n.
\end{equation*}
Then the product in \eqref{eq:def-Theta-ell} defines an inner function. Let $E_{\ell}$ denote the associated Hermite--Biehler function with phase $\varphi_{\alpha,\ell}$. Set $\beta = \alpha^{-1}$. For every $k\in \N$ and $x\in \R$, we have 
\begin{equation}\label{eq:A.2}
    |\varphi_{\alpha,\ell}^{(k)}(x)|\lesssim_{k} \ell (1+|x|)^{k\hp{\beta-1}}.
\end{equation}
Furthermore, \eqref{eq:cond-derivatives-Theta} holds and
\begin{equation}\label{eq:A.3}
    \varphi_{\alpha, \ell}'(x)\asymp \ell (1+|x|)^{\beta-1}.
\end{equation}
\end{proposition}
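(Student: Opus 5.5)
Since every estimate here reduces to the single function $S(x)=\sum_{n}\IM(z_n)/|x-z_n|^2$ and its derivatives, the plan is to work directly with $S$ and its companions. First I check the Blaschke condition \eqref{eq:def-Blasc-cond}: $\IM(z_n)/|z_n|^2\asymp y_n n^{-2\alpha}$. For $\alpha\le\frac12$ this is $n^{-\alpha-1}$, and for $\alpha>\frac12$ it is $n^{-2\alpha}$; both are summable. The points also accumulate only at infinity, so $\Theta_\ell$ is a meromorphic inner function. By \eqref{eq:Der-Theta-ell} we have $\varphi_{\alpha,\ell}'=\ell S$, and hence $\varphi_{\alpha,\ell}^{(k)}=\ell S^{(k-1)}$. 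Since the points come in symmetric pairs $\pm x_n+iy_n$, $S$ is even, so it suffices to treat $x\ge0$ (or $x\ge1$, the bounded region being handled by continuity and positivity of $S$).

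For \eqref{eq:A.3} I would fix $x\ge1$ and let $N\asymp x^{\beta}$ be the index with $x_N$ closest to $x$. The gap is $x_{n+1}-x_n\asymp n^{\alpha-1}\asymp x^{1-\beta}$ near $N$. I then compare the sum with the integral
\begin{equation*}
S(x)\approx\int\frac{y(t)}{(x-t)^2+y(t)^2}\,\frac{dt}{\Delta(t)},
\end{equation*}
where $\Delta(t)$ is the local spacing, so that $1/\Delta(t)\asymp t^{\beta-1}$. In the case $\alpha\le\frac12$, the heights $y_n\asymp\Delta$ near $N$ are comparable to the spacing. The Poisson kernels at scale $y\asymp\Delta$ therefore overlap in a bounded fashion, and the terms with $|n-N|\lesssim1$ already give the lower bound $\gtrsim 1/y_N\asymp x^{\beta-1}$. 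For the upper bound I split into $|x_n-x|\le x/2$, where I sum over dyadic shells in $|n-N|$ and get $\lesssim\sum_m 1/(y_N(1+m^2))$, and the far range, whose contribution is $\lesssim\sum_n y_n/(x^2+x_n^2)=O(1)$, which is fine since $\beta-1>0$. In the case $\alpha>\frac12$ we have $y_n=1\ge\Delta$. Then roughly $1/\Delta\asymp x^{\beta-1}$ points lie within distance $1$ of $x$, each contributing $\asymp1$, which gives $S\asymp x^{\beta-1}$ both ways. The contributions of the mirrored points $-x_n$ are $O(1)$.

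For \eqref{eq:A.2}, first note that differentiating $\IM(z)/|x-z|^2=\IM\,(x-\bar z)^{-1}$ gives $|\partial_x^{k-1}\,\cdot|\lesssim_k y_n|x-z_n|^{-k-1}$. The same dyadic decomposition then bounds the near part by $y_N^{-k}\asymp x^{k(\beta-1)}$ when $\alpha\le\frac12$. When $\alpha>\frac12$ the bound is $x^{\beta-1}$, which is $\le x^{k(\beta-1)}$. Condition \eqref{eq:cond-derivatives-Theta} is the same sum with exponent $2k+2$; it converges at every real $x_0$ because $|x_0-z_n|\ge y_n>0$ and the tail is dominated by $\sum y_n|z_n|^{-2k-2}<\infty$.

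The main obstacle is the case $\alpha\le\frac12$ in the lower bound and the derivative bound, where $y_n\to0$. Between consecutive $x_n$ the value of $S$ could in principle dip. I have to check that at the midpoint the distance to the nearest pole is $\asymp\Delta\asymp y_N$, so the nearby kernel is still $\gtrsim 1/y_N$. This requires the precise equality of orders $y_n\asymp x_{n+1}-x_n\asymp n^{\alpha-1}$, which is exactly why $y_n=n^{\alpha-1}$ is chosen.
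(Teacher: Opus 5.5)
Your proposal is correct and follows essentially the paper's route. Both use the Poisson-sum representation of $\varphi_{\alpha,\ell}^{(k)}$ with $|P_v^{(m)}(u)|\lesssim_m v/(u^2+v^2)^{1+m/2}$, a near block around $N\asymp x^{\beta}$ controlled by $y_n\asymp y_N$ and $\Delta_n\asymp\Delta_N$ (with $y_N\asymp\Delta_N$ when $\alpha\le\frac12$ and $y_N=1$ when $\alpha>\frac12$), negligible tails and mirrored terms, and a lower bound from the $\asymp 1+y_N/\Delta_N$ points $x_n$ within distance $\lesssim y_N$ of $x$. The differences are cosmetic: you split by $|x_n-x|\le x/2$ and treat the two regimes separately, rather than splitting by index ranges with the unified bound $y_N^{-k}(1+y_N/\Delta_N)$; you check \eqref{eq:cond-derivatives-Theta} directly from $|x_0-z_n|\gtrsim|z_n|$ for large $n$; and there is a harmless sign slip, since $\IM(z)/|x-z|^2=\IM\,(x-z)^{-1}=-\IM\,(x-\bar z)^{-1}$.
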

\begin{proof}
First of all, it is clear from \eqref{eq:A.1} that \eqref{eq:def-Theta-ell} is an inner function, given that the Blaschke condition \eqref{eq:def-Blasc-cond} is verified for the zero set.

We begin by proving \eqref{eq:A.2} and \eqref{eq:cond-derivatives-Theta}. By the mean value theorem, for each $n\geq 1$ there exists $\xi_n\in(n,n+1)$ such that
\begin{equation*}
    \Delta_n\defi x_{n+1}-x_n=(n+1)^\alpha-n^\alpha=\alpha \xi_n^{\alpha-1}.
\end{equation*}
Since $\xi_n\asymp n$, it follows that
\begin{equation}\label{eq:A.4}
    \Delta_n\asymp n^{\alpha-1}.
\end{equation}
Consequently, for $n\asymp N$
\begin{equation}\label{eq:A.5}
    \Delta_n\asymp \Delta_N, \qquad y_n\asymp y_N.
\end{equation}
In particular,
\begin{equation}\label{eq:A.6}
    \Delta_N^{-1}\asymp N^{1-\alpha}.
\end{equation}
We next recall the representation of the derivatives of $\varphi_{\alpha, \ell}$. Let $P_v(u)$ denote the (non-normalized) Poisson kernel for $\mathbb{R}^2_+$, defined by
\begin{equation*}
    P_v(u)=\frac{v}{u^2+v^2} \quad \text{for } u\in\mathbb{R},\, v>0.
\end{equation*}
Using the formula \eqref{eq:Der-Theta-ell} for $\varphi_{\alpha, \ell}'$, we have
\begin{equation}\label{eq:A.7}
    \varphi_{\alpha, \ell}'(x)= \ell \sum_{n=1}^\infty \hp{P_{y_n}(x-x_n)+P_{y_n}(x+x_n)}.
\end{equation}
We now justify differentiation to arbitrary order. For each integer $m\geq 0$,
\begin{equation}\label{eq:A.8}
    |P_v^{(m)}(u)| \lesssim_m\frac{v}{(u^2+v^2)^{1+\frac{m}{2}}}.
\end{equation}

Fix now $k\geq 1$ and let $K\subset\R$ be compact. For all $n$ sufficiently large, the estimate \eqref{eq:A.8} with $m=k-1$ gives
\begin{equation*}
    |P_{y_n}^{(k-1)}(x\pm x_n)| \lesssim_k \frac{y_n}{x_n^{k+1}}
\end{equation*}
uniformly in $x\in K$. Now
\begin{equation*}
    \frac{y_n}{x_n^{k+1}} = 
    \begin{cases} 
    n^{\alpha-1-\alpha(k+1)}=n^{-1-\alpha k}, & 0<\alpha\leq \frac12,\\ 
    n^{-\alpha(k+1)}, & \frac12<\alpha<1, 
    \end{cases}
\end{equation*}
and both series are summable. Hence, using the Weierstrass $M$-test, we conclude that
\begin{equation*}
    \sum_{n=1}^\infty \hp{P_{y_n}^{(k-1)}(x-x_n)+P_{y_n}^{(k-1)}(x+x_n)}
\end{equation*}
converges uniformly on compact subsets of $\R$. Therefore, by the standard theorem on termwise differentiation of uniformly convergent series of smooth functions
\begin{equation}\label{eq:A.9}
    \varphi_{\alpha, \ell}^{(k)}(x) = \ell \sum_{n=1}^\infty \hp{P_{y_n}^{(k-1)}(x-x_n)+P_{y_n}^{(k-1)}(x+x_n)}.
\end{equation}

Applying \eqref{eq:A.8} with $m=k-1$, we have from \eqref{eq:A.9} that $|\varphi_{\alpha, \ell}^{(k)}(x)| \lesssim_k \ell S_k(x)$, where
\begin{equation*}
    S_k(x)\defi\sum_{n=1}^\infty\hp{\frac{y_n}{\hp{(x-x_n)^2+y_n^2}^{(k+1)/2}} + \frac{y_n}{\hp{(x+x_n)^2+y_n^2}^{(k+1)/2}}}.
\end{equation*}
Set $s=\frac{k+1}{2}$. Then $s>\frac12$, and we may rewrite
\begin{equation}\label{eq:A.10}
    S_k(x)=S_k^-(x)+S_k^+(x),
\end{equation}
where
\begin{equation*}
    S_k^-(x)=\sum_{n=1}^\infty \frac{y_n}{\hp{(x-x_n)^2+y_n^2}^s}, \qquad S_k^+(x)=\sum_{n=1}^\infty \frac{y_n}{\hp{(x+x_n)^2+y_n^2}^s}.
\end{equation*}
Since the left-hand side of \eqref{eq:A.10} defines an even function, it suffices to consider $x\geq 0$. In the compact interval $[0,2]$, the function $\varphi_{\alpha, \ell}^{(k)}$ is continuous, therefore bounded, and consequently \eqref{eq:A.2} is immediately obtained by \eqref{eq:A.9} after enlarging the constant if necessary.

We may assume from now on that $x\geq 2$, and we choose the unique integer $N$ such that
\begin{equation*}
    x_N\leq x<x_{N+1}.
\end{equation*}
Since $x_N=N^\alpha$, we have
\begin{equation}\label{eq:A.11}
    N\asymp x^{\beta}, \qquad \Delta_N^{-1}\asymp N^{1-\alpha}\asymp x^{\beta-1}.
\end{equation}

\begin{claim*}
If $s>\frac12$, then
\begin{equation}\label{eq:A.12}
    \sum_{N/2<n<4N}\frac{y_n}{\hp{(x-x_n)^2+y_n^2}^s} \lesssim y_N^{1-2s}\hp{1+\frac{y_N}{\Delta_N}}.
\end{equation}
\end{claim*}
\begin{proof}
By \eqref{eq:A.5}, one has $y_n\asymp y_N$ throughout the range $N/2<n<4N$. Next, if $n\leq N-1$, then
\begin{equation*}
    x-x_n\geq x_N-x_n=\sum_{j=n}^{N-1}\Delta_j\gtrsim (N-n)\Delta_N,
\end{equation*}
where we used \eqref{eq:A.5} to compare $\Delta_j$ with $\Delta_N$ for $j\asymp N$. Likewise, if $n\geq N+2$, then
\begin{equation*}
    x_n-x\geq x_n-x_{N+1}=\sum_{j=N+1}^{n-1}\Delta_j\gtrsim (n-N-1)\Delta_N.
\end{equation*}
Hence
\begin{equation*}
    \sum_{N/2<n<4N}\frac{y_n}{\hp{(x-x_n)^2+y_n^2}^s} \lesssim y_N \sum_{m\in\Z}\frac{1}{(m^2\Delta_N^2+y_N^2)^s}.
\end{equation*}
Factoring out $y_N^{2s}$ gives
\begin{equation*}
    y_N^{1-2s} \sum_{m\in\Z} \frac{1}{\hp{1+m^2(\Delta_N/y_N)^2}^s}.
\end{equation*}
Now for every $a>0$ and every $s>\frac12$, $\sum_{m\in\Z}(1+a^2m^2)^{-s}\lesssim 1+a^{-1}$, by comparison with the convergent integral $\int_0^\infty (1+t^2)^{-s}\dt$. Applying this with $a=\Delta_N/y_N$ proves \eqref{eq:A.12} and the claim follows. 
\end{proof}

We now estimate $S_k^-(x)$. We split the sum into three regions.
First, if $n\leq N/2$, then
\begin{equation*}
    x-x_n\geq x_N-x_{\lceil N/2 \rceil}\gtrsim N^\alpha.
\end{equation*}
Therefore
\begin{equation*}
    \sum_{n\leq N/2}\frac{y_n}{\hp{(x-x_n)^2+y_n^2}^s} \lesssim N^{-2\alpha s}\sum_{n\leq N/2} y_n.
\end{equation*}
We estimate the partial sums of $y_n$ separately in the two regimes. For $0<\alpha\leq \frac12$, we have that $\sum_{n\leq N} y_n = \sum_{n\leq N} n^{\alpha-1} \lesssim N^\alpha$ by comparison with $\int_1^N t^{\alpha-1}\dt$. If $\frac12<\alpha<1$, then $y_n=1$, and therefore $\sum_{n\leq N} y_n=N$. Thus
\begin{equation*}
\sum_{n\leq N/2}\frac{y_n}{\hp{(x-x_n)^2+y_n^2}^s} 
\lesssim 
\begin{cases} 
    N^{-2\alpha s}N^\alpha=N^{-\alpha k}, & 0<\alpha\leq \frac12,\\ 
    N^{-2\alpha s}N=N^{1-\alpha(k+1)}, & \frac12<\alpha<1. 
\end{cases}
\end{equation*}
Since $\Delta_N^{-k}\asymp N^{k(1-\alpha)}$ by \eqref{eq:A.6}, and the preceding powers of $N$ are no larger than a fixed constant times $N^{k(1-\alpha)}$ for $N\geq 1$, we obtain
\begin{equation}\label{eq:A.13}
    \sum_{n\leq N/2}\frac{y_n}{\hp{(x-x_n)^2+y_n^2}^s} \lesssim \Delta_N^{-k}.
\end{equation}
Second, if $n\geq 4N$, then $x_n-x\gtrsim x_n$, because $x<x_{N+1}= (N+1)^\alpha$ and $x_n=n^\alpha$ with $n\geq 4N$. Hence
\begin{equation*}
    \sum_{n\geq 4N}\frac{y_n}{\hp{(x-x_n)^2+y_n^2}^s} \lesssim \sum_{n\geq 4N}\frac{y_n}{x_n^{2s}} = \sum_{n\geq 4N}\frac{y_n}{x_n^{k+1}}.
\end{equation*}
Thus 
\begin{equation*}
\sum_{n\geq 4N}\frac{y_n}{\hp{(x-x_n)^2+y_n^2}^s} \lesssim 
\begin{cases} 
    \sum_{n\geq 4N} n^{-1-\alpha k}\lesssim N^{-\alpha k}, & 0<\alpha\leq \frac12,\\ 
    \sum_{n\geq 4N} n^{-\alpha(k+1)}\lesssim N^{1-\alpha(k+1)}, & \frac12<\alpha<1. 
\end{cases}
\end{equation*}
Again these quantities are bounded by a constant multiple of $\Delta_N^{-k}$, and therefore
\begin{equation}\label{eq:A.14}
    \sum_{n\geq 4N}\frac{y_n}{\hp{(x-x_n)^2+y_n^2}^s} \lesssim \Delta_N^{-k}.
\end{equation}
Third, for the middle block $N/2<n<4N$, the claim \eqref{eq:A.12} yields
\begin{equation*}
    \sum_{N/2<n<4N}\frac{y_n}{\hp{(x-x_n)^2+y_n^2}^s}\lesssim y_N^{1-2s}\hp{1+\frac{y_N}{\Delta_N}}\lesssim y_N^{-k}\hp{1+\frac{y_N}{\Delta_N}},
\end{equation*}
because $2s-1=k$. We now compare this with $\Delta_N^{-k}$. If $0<\alpha\leq \frac12$, then $y_N\asymp \Delta_N$ by \eqref{eq:A.4}, and so
\begin{equation*}
    y_N^{-k}\hp{1+\frac{y_N}{\Delta_N}}\lesssim \Delta_N^{-k}.
\end{equation*}
If $\frac12<\alpha<1$, then $y_N=1$, and therefore
\begin{equation*}
    y_N^{-k}\hp{1+\frac{y_N}{\Delta_N}}=1+\Delta_N^{-1}\lesssim \Delta_N^{-k},
\end{equation*}
because $k\geq 1$ and $\Delta_N\leq 1$ for all sufficiently large $N$ (finitely many remaining values of $N$ are absorbed into the implicit constant). Hence
\begin{equation}\label{eq:A.15}
    \sum_{N/2<n<4N}\frac{y_n}{\hp{(x-x_n)^2+y_n^2}^s} \lesssim \Delta_N^{-k}.
\end{equation}
Combining \eqref{eq:A.13}, \eqref{eq:A.14}, and \eqref{eq:A.15}, we obtain
\begin{equation}\label{eq:A.16}
    S_k^-(x)\lesssim \Delta_N^{-k}.
\end{equation}

We estimate $S_k^+(x)$ in the same spirit. In contrast with the analysis of $S_k^-$, here it is enough to split the sum into the two ranges $n\leq N$ and $n>N$. Indeed, since $x,x_n>0$, we always have
\begin{equation*}
    x+x_n\ge x, \qquad x+x_n\ge x_n.
\end{equation*}
We use these bounds separately according to the range of $n$. If $n\leq N$, then $x+x_n\geq x\asymp N^\alpha$, so
\begin{equation*}
    \sum_{n\leq N}\frac{y_n}{\hp{(x+x_n)^2+y_n^2}^s} \lesssim N^{-2\alpha s}\sum_{n\leq N} y_n \lesssim \Delta_N^{-k},
\end{equation*}
by the same argument used in the estimate of the left tail. If $n>N$, then $x+x_n\geq x_n$, and therefore
\begin{equation*}
    \sum_{n>N}\frac{y_n}    {\hp{(x+x_n)^2+y_n^2}^s}\leq\sum_{n>N}\frac{y_n}{x_n^{2s}}=\sum_{n>N}\frac{y_n}{x_n^{k+1}}\lesssim\Delta_N^{-k},
\end{equation*}
exactly as in the estimate of the right tail. Hence
\begin{equation}\label{eq:A.17}
    S_k^+(x)\lesssim \Delta_N^{-k}.
\end{equation}
From \eqref{eq:A.16} and \eqref{eq:A.17} we conclude that $S_k(x)\lesssim \Delta_N^{-k}$. In particular, \eqref{eq:cond-derivatives-Theta} follows. Moreover, we have $|\varphi_{\alpha, \ell}^{(k)}(x)| \lesssim_k \ell \Delta_N^{-k}$. Using \eqref{eq:A.11}, we finally obtain $|\varphi_{\alpha, \ell}^{(k)}(x)| \lesssim_k \ell x^{k(\beta-1)}$ for all $x\geq 2$. As noted earlier, the same upper bound is trivial on $[-2,2]$ after adjusting the constant. This finishes the proof of \eqref{eq:A.2}.

For the first derivative lower bound, choose $M=\hfl{ c\frac{y_N}{\Delta_N}}+1$, where $c>0$ is small enough that $1\leq M\le N/4$ for all large $N$ (this is indeed possible because $y_N/\Delta_N\asymp 1$ if $0<\alpha\le \frac12$, while $y_N/\Delta_N\asymp N^{1-\alpha}=o(N)$ if $\frac12<\alpha<1$). For $0\le m\le M$,
\begin{equation*}
    0\le x-x_{N-m}\le x_{N+1}-x_{N-m} =\sum_{j=N-m}^{N}\Delta_j \lesssim (m+1)\Delta_N \lesssim y_N.
\end{equation*}
Moreover, since $0\leq m\leq M\leq N/4$, we have
\begin{equation*}
    \frac{3N}{4}\leq N-m\leq N.
\end{equation*}
Hence, by \eqref{eq:A.5}, for every $0\leq m\leq M$ we have $y_{N-m}\asymp y_N$, and
\begin{equation*}
    \frac{y_{N-m}}{(x-x_{N-m})^2+y_{N-m}^2}\gtrsim \frac{1}{y_N}.
\end{equation*}
Summing over $m$ throughout this range and using that, in \eqref{eq:A.7}, the summands are nonnegative, we obtain
\begin{equation*}
    \varphi_{\alpha,\ell}'(x)\ge \ell \sum_{m=0}^{M} \frac{y_{N-m}}{(x-x_{N-m})^2+y_{N-m}^2} \gtrsim\frac{M+1}{y_N} \ell \gtrsim \hp{\frac{1}{y_N} + \frac{1}{\Delta_N}} \ell \gtrsim \Delta_N^{-1}\ell. 
\end{equation*}
Together with \eqref{eq:A.2} and \eqref{eq:A.11}, this proves $\varphi_{\alpha,\ell}'(x)\asymp \ell x^{\beta-1}$ for $x \geq 2$. Since the right-hand side of \eqref{eq:A.7} is continuous and strictly positive on $[-2,2]$, we conclude \eqref{eq:A.3} and the result follows.
\end{proof}

\section*{Acknowledgments}
I would like to thank my advisors, Luz Roncal and Mateus Sousa, for many valuable discussions and for their important suggestions that significantly improved this work.

\end{document}